\newcommand\scalemath[2]{\scalebox{#1}{\mbox{\ensuremath{\displaystyle #2}}}}
\newcommand\lie[1]{\mathfrak{#1}}
\newcommand{\lb}{\lie{b}}
\newcommand{\lu}{\lie{u}}
\newcommand{\n}{\lie{n}}
\newcommand{\g}{\lie{g}}
\newcommand{\s}{\lie{sl}}
\newcommand{\h}{\lie{h}}
\newcommand{\C}{\mathbb{C}}
\newcommand{\Z}{\mathbb{Z}}
\newcommand{\X}{\mathbb{X}}
\newcommand{\Y}{\mathbb{Y}}
\newcommand{\N}{\mathbb{N}}
\newcommand{\Nt}{{ \widetilde{\mathcal{N}} }}
\newcommand{\Ox}{\mathcal{O}}
\newcommand{\Qx}{\mathcal{Q}}
\newcommand{\F}{\mathcal{F}}
\newcommand{\Lx}{\mathcal{L}}
\newcommand\op[1]{{\rm{#1}}}
\newcommand\mH{\mathrm{H}}
\newcommand\lra{\longrightarrow}
\newcommand\pr{\mathrm{pr}}
\newcommand\ad{\mathrm{ad}}
\newcommand\al{\alpha}
\newcommand{\Ul}{{\mathsf U}}
\newcommand{\ul}{u_q({\mathfrak g})}
\newcommand{\zl}{{\mathsf z }}
\newcommand{\A}{ \mathcal{A} }
\newcommand{\m}{\lie{m}}
\theoremstyle{theorem}
\newtheorem{theorem}{Theorem}[section]
\newtheorem{corollary}[theorem]{Corollary}
\newtheorem{conjecture}[theorem]{Conjecture}
\newtheorem{lemma}[theorem]{Lemma}
\newtheorem{proposition}[theorem]{Proposition}
\theoremstyle{definition}
\newtheorem{definition}[theorem]{Definition}
\newtheorem{example}[theorem]{Example}
\newtheorem{remark}[theorem]{Remark}
\numberwithin{equation}{section}
\title{The center of small quantum groups I: the principal block in type A}
\author{Anna Lachowska, You Qi}
\date{\today}
\begin{document}

\maketitle
\tableofcontents

\begin{abstract}
We develop an elementary algebraic method to compute 
the center of the principal block of a small quantum group associated with a complex semisimple Lie algebra at a root of unity. The cases of $\mathfrak{sl}_3$ and $\mathfrak{sl}_4$ are computed explicitly.  This allows us to formulate the conjecture that, as a bigraded vector space, the center of 
a regular block of the small quantum $\mathfrak{sl}_m$ at a root of unity is isomorphic to Haiman's  
diagonal coinvariant algebra for the symmetric group $S_{m}$.
\end{abstract}

\section{Introduction}
\subsection{Motivation} 
The problem of determining the structure of the centers of small quantum groups at roots of unity has
a long history. Even before the small quantum group $u_q(\g)$ was defined by Lusztig~\cite{LusfdHopf} for a semisimple Lie algebra $\g$ and an $l$-th root of unity $q$, a similar problem was considered for algebraic groups over fields of positive characteristic (see, e.g. \cite{Haboush}, which studies the algebra of distributions supported at $1$ of a reductive algebraic group).  The same question is closely related to the problem of finding the center of the restricted enveloping algebra of a reductive algebraic group over a field of positive characteristic. 
In both quantum and modular cases the objects under consideration are finite-dimensional
Hopf algebras whose structure are determined by a finite root system and an integer or a prime parameter.
By the work of Andersen, Jantzen and Soergel~\cite{AJS}, the principal blocks of both algebras are Morita equivalent to the same algebra (up to a base field change, with some restrictions on $l$), meaning that
an answer for the structure of the center for one of them translates to the other.

In addition to the original motivation for the study of the center of $u_q(\g)$, based on the connection with
the structure theory of algebraic groups over fields of positive characteristic, other potential applications
should be mentioned.  One is suggested by the equivalences of categories between the representations
of quantum groups at roots of unity and affine Lie algebras. The G-invariant part of 
the center of the small quantum group is contained in the center
of the big quantum group, which has a representation category equivalent to that of an affine Lie algebra at a negative level by 
\cite{KaLuIII, KaLuIV} and \cite{ABG}. Another possible application comes from the theory of quantum topological invariants of
$3$-manifolds \cite{RT2, KauRad, Lyu}.  It is known \cite{Hen} that a family of quantum invariants,
including the Reshetikhin-Turaev and Hennings-Kauffman-Radford invariants, is parametrized by
certain special elements of the center of $u_q(\g)$.  Another direction has been suggested in a series of papers
 studying logarithmic conformal field theories (see, e.g., \cite{FGST}). In case of $\g = \s_2$, the small
 quantum group $u_q(\s_2)$ and a certain $W$-algebra act on the same vertex operator algebra, and their
 actions centralize each other. The expectation is that this observation extends to higher rank, and that there is a
 strong relation between the Hochschild cohomology of the categories involved and the centers of the two algebras.

Despite the fact that $u_q(\g)$ is a finite dimensional algebra over $\C$ that has been
studied extensively for over 20 years,
the structure and even the dimension of its center has remained unknown, except in the case of
$\g = \s_2$. In the latter case the answer was first found in \cite{Ker}: the dimension of the center
of $u_q(\s_2)$, with $q$ a primitive root of unity of odd degree $l \geq 3$, equals $\frac{3l-1}{2}$, which
is unexpectedly large (the number of inequivalent irreducible representations of $u_q(\s_2)$ is $l$).
For higher rank, \cite{La} contains a description of the largest known central subalgebra, and
provides a lower bound for the dimension of the center. In particular, the principal block of the center
of $u_q(\g)$, whose structure is independent of $l$ by \cite{AJS}, contains a subalgebra of
dimension $2|W|-1$, where $W$ is the Weyl group associated with $\g$. For $\g = \s_2$, this 
subalgebra coincides with the whole center. For higher rank, it was expected from the
beginning that the dimension of the center of the principal block of $u_q(\g)$ should be larger than $2|W|-1$.
Building on \cite{ABG}, a description of the Hochschild cohomology of the principal block of $u_q(\g)$
is given in \cite{BeLa} for any semisimple $\g$ in terms of the cohomology of certain polyvector fields over the Springer resolution. The previously known central
subalgebra is clearly visible in this framework; however, it does not provide an immediate answer for the combinatorial structure or even the dimension of the center of the principal block for higher rank.

\subsection{Summary}
In the present paper we develop a method to compute explicitly the sheaf cohomology groups involved in \cite{BeLa}. We carry out a detailed computation for $\g = \mathfrak{sl}_3$, and present the basic steps and the result of a computation for $\g =\mathfrak{sl}_4$. This allows us to formulate an intriguing conjecture (Conjecture \ref{conj-DC}) for the structure of the principal block of the center of the small quantum group in type $A$.   In a sequel \cite{LQ2} we will give a parallel discussion of the singular blocks for $\mathfrak{sl}_3$ and more generally in type A. Further computations will be performed in subsequent works in order to formulate similar conjectures for other Lie types. 

The organization of the paper is as follows. 
In Section \ref{sec-Springer} we present a method for computation of the dimension of the principal block of the center of $u_q(\g)$.  
Let $G$ be a complex semisimple Lie group, and $B$ be a fixed Borel subgroup in $G$. 
We start by recalling the main object of study, the Springer variety $\Nt$ as the cotangent bundle of the flag variety $G/B $, and its connection to the principal block of small quantum groups \cite{ABG, BeLa}. The main result in \cite{BeLa} (Theorem \ref{Hoh}) allows us to reduce the center computation for small quantum groups to the cohomology of the poly-tangent bundle $\wedge^\bullet T\Nt$ on the Springer variety. By pushing forward the poly-tangent bundle onto the flag variety $G/B$ along the canonical projection map $\mathrm{pr}: \Nt\lra G/B$, one obtains a family of equivariant vector bundles on $G/B$. The coherent cohomology of any equivariant vector bundle $G\times_B E$ on $G/B$ can be computed via Bott's classical result (Theorem \ref{Bott_rel_lie}), relating them to more algebraically approachable (relative) Lie algebra cohomology groups with coefficients in $E$. Our first main result in this Section is an explicit description of the equivariant structure of the pushforward sheaves $\mathrm{pr}_*(\wedge^\bullet T\Nt)$. The answer is formulated in Theorem \ref{thm-equ-structure-TN} and Corollary \ref{cor-exterior-product}. This allows us, in principle, to compute the center of the principal block of the small quantum groups via relative Lie algebra cohomology. The second ingredient of our approach consists of a simplification of Bott's method by using the Bernstein-Gelfand-Gelfand resolution of a finite-dimensional simple $\g$-module that reduces the relative Lie algebra cohomology to the combinatorics of $E$ (Proposition \ref{rel_lie_BGG}). The combination of these two ingredients allows us to obtain an explicit algorithmic method to compute the principal block of the center of small quantum groups which, in small rank cases, can be performed by hand.

In Section \ref{sec-sl3-center}, we apply the general machinery developed in Section \ref{sec-Springer} to the particular case of $u_q(\mathfrak{sl}_3)$. After fixing the specific notation for this case in Section \ref{sec-ntn}, we compute the cohomology of two auxiliary vector bundles in Section \ref{sec-some-sheaves}, illustrating both the geometric and algebraic methods involved. 
In Section \ref{sec-sl3}, the zeroth Hochschild cohomology of $\Nt$ is computed explicitly, and the results are tabulated in Theorem \ref{thm-sl3}. In particular, the dimension of the principal block of the center for $\mathfrak{sl}_3$ is $16$, considerably greater than the dimension of the previously known subalgebra $2|W|-1 = 11$. Our key observation is that the bigraded components of the center fit into a ``formal Hodge diamond," which is isomorphic to Haiman's bigraded diagonal coinvariant algebra \cite{Hai} for $\mathfrak{sl}_3$ up to an overall bigrading transformation.
This remarkable correspondence between the center of the principal block of $u_q(\g)$ and Haiman's diagonal coinvariant algebra is, in retrospect, also confirmed for $\g = \mathfrak{sl}_2$ via the work of \cite{Ker}. 

The main goal of Section \ref{sec-center-symmetry} is to formulate a conjecture generalizing the $u_q(\mathfrak{sl}_3)$ case to a general finite type $A$ situation.  Our first observation is that there exists an $\mathfrak{sl}_2$ action on the Hochschild cohomology groups of $\Nt$ for any $\g$ (Theorem \ref{thm-sl2-action}), which resembles the usual $\mathfrak{sl}_2$ action on the Dolbeault cohomology ring of a smooth compact K\"{a}hler manifold. Since $\Nt$ is holomorphic symplectic, such an action is generated by wedging with the Poisson bivector field $\tau$ and contracting with the canonical symplectic form $\omega$ on $\Nt$. Consequently, the center $\zl_0(u_q(\g))$ contains a large subalgebra $\mathrm{\tau C_\g}$ 
generated by the cohomology ring of $G/B$, which is isomorphic to the coinvariant algebra $\mathrm{C_\g}$, and the Poisson bivector field $\tau$ (Corollary \ref{cor-big-subalg}). This subalgebra contains the previously known largest subalgebra found in \cite{La}. Thus in the case of $u_q(\mathfrak{sl}_2)$, $\mathrm{\tau C_{\mathfrak{sl}_2}}$ agrees with the entire principal block center, while our computation shows that, for $u_q(\mathfrak{sl}_3)$, it has codimension $1$ in $\zl_0(u_q(\mathfrak{sl}_3))$. However, as the main Conjecture \ref{conj-DC} would imply, the codimension of the subalgebra $\mathrm{\tau C_{\mathfrak{sl}_m}}\subset \zl_0$ grows exponentially in $m$ for $u_q(\mathfrak{sl}_m)$. Finally, in Section \ref{sec-further-evidence}
 we compute the bigraded dimension of the center in the case $u_q(\mathfrak{sl}_4)$ (see Theorem 
 \ref{conj-sl4}). The computation is based on the algorithm developed in Section \ref{sec-Springer}; some of the entries are computed using a Python code based on this algorithm. The obtained result confirms 
 Conjecture \ref{conj-DC} in the case of $u_q(\mathfrak{sl}_4)$.

We believe that the similarity between the two canonically defined objects associated with $\mathfrak{sl}_m$, namely the principal block of the center $\zl_0(u_q(\mathfrak{sl}_m))$\footnote{By the work of Andersen-Jantzen-Soergel \cite{AJS}, this commutative $\C$-algebra $\zl_0(u_q(\mathfrak{sl}_m))$ is independent of the order $l$ of the root of unity, if $l$ is greater than  the Coxeter number of $\g$.}, and Haiman's diagonal coinvariant algebra $\mathrm{DC}_m$, is not merely a coincidence for $m=2, 3$ and $4$. To reveal the algebro-geometric and representation theoretical reasons behind this remarkable similarity will be the goal of our subsequent work. 

To conclude this summary, let us emphasize that, through the Frenkel-Gaitsgory derived equivalence \footnote{The derived version is already enough to imply the isomorphism of centers. For a stronger (conjectural) abelian equivalence, see \cite{BezrLin}.} between the principal 
block of the small quantum group and a category of certain modules over the affine Lie algebra at a critical level  \cite{FG}, 
the validity of Conjecture \ref{conj-DC} would also shed new light on understanding of the principal block of $\widehat{\mathfrak{sl}}_m$ at the critical level. 

\subsection{Some further questions}
The current work is only an initial step in the investigation of the center for small quantum groups. Here we briefly outline some future directions we plan to pursue.

In the sequel \cite{LQ2} to this paper, we consider the same problem for singular blocks of small quantum groups. We will first formulate a generalization of the result in \cite{BeLa} relating singular block centers to the zeroth Hochschild cohomology of parabolic Springer varieties, and then compute the center of the singular blocks for  $\g = \mathfrak{sl}_3$ together with some other examples via the method developed in this paper.

Finding an algebraic interpretation of the newly discovered central elements for small quantum groups constitutes another important problem. The previously known largest central subalgebra of the regular block of the center identified in \cite{BeLa} 
together with its analogs in the singular blocks corresponds to a certain subspace of tracelike functionals described in 
 \cite{La}. We would like to know which trace-like functionals correspond to the newly found generators in the zeroth Hochschild cohomology ring in both regular and singular cases.  In particular, the emergence of the $\s_2$ action on the center demands an interpretation in the framework of the representation theory of the small quantum group.   We plan to address this question in the subsequent works. 

The same problems for other Lie types will also be studied in upcoming works. The method developed in this paper is adaptable to machine computation, and pursuing this path will allow us to formulate similar conjectures for other Lie types. For instance, it would be interesting to get an explicit answer for type $B_4$, the lowest-rank example where the diagonal coinvariant algebra in type $B$ differs from its canonical quotient \cite{Hai, Gor, CheDiagonal}. Because of the large dimensions of the vector bundles involved, this computation is not easily accessible by hand.

 We will continue working on obtaining further evidence and, hopefully, a proof for the formulated conjectures on the structure of the center of small quantum groups. Furthermore, it would be interesting to find a connection between centers of small quantum groups and categorified small quantum groups, as initiated in \cite{KQ, EQ1}, and to understand how the Hecke categories in \cite{RW}  are related to the center of small quantum group, which we also plan to pursue in the future.

\paragraph{Acknowledgments.} The authors would like to thank Roman Bezrukavnikov, Ivan Cherednik, A.~Johan de Jong, Dennis Gaitsgory, Jiuzu Hong, Steve Jackson, Mikhail Khovanov, Peng Shan, Geordie Williamson and Chenyang Xu for helpful discussions. We are thankful to Anton Mellit for giving a talk in November 2015 at EPFL that attracted our attention to Haiman's diagonal coinvariant algebra, which resulted in a decisive advancement in our project. We are immensely grateful 
to Bryan Ford for writing a Python code implementing our algorithm in the case $\g = \mathfrak{sl}_4$, which allows 
us to obtain a complete answer in this example.   
 Special thanks go to Igor Frenkel for his constant encouragement and for bringing to the authors' attention some potential physical implications of the results.

\section{The Springer resolution and the tangent bundle}\label{sec-Springer}
In this section we develop a method that will allow us to compute the center of the principal block of the small quantum $\mathfrak{sl}_3 $, and provide an algorithm for performing similar computations in general. In particular, we analyze the structure of the tangent bundle of the Springer resolution $\Nt$ and its exterior powers relative to the flag manifold. 

\subsection{Elements}\label{sec-elements}
\paragraph{Notation.}In this section, $G$ is a reductive group over an algebraically closed field of characteristic zero\footnote{As an abuse of notation, we will always use $\mathbb{C}$ to stand for this ground field}, and $B$ is a fixed Borel subgroup of $G$. The unipotent subgroup $[B,B]$ and Cartan subgroup $B/[B,B]$ will be written as $N$ and $H$ respectively, so that $B=HN$. The opposite unipotent group to $N$ will be denoted by $U$.

Let $X:=G/B$ be the flag variety associated with $G$.

The (complex) Lie algebras for the corresponding groups will be denoted by the lower case Gothic letters: 
\begin{equation}\label{eqn-lie-algebras}
\g:=\mathrm{Lie}(G), \quad \lb:=\mathrm{Lie}(B), \quad \n:=\mathrm{Lie}(N), \quad \h:=\mathrm{Lie}(H), \quad \lu:=\mathrm{Lie}(U).
\end{equation}
We have $\n=[\lb,\lb]$, and $\lb \cong \h\oplus \n$.

The Lie groups act on their Lie algebras by the adjoint representation. Choose a non-degenerate $G$-invariant bilinear form on $\g$ (e.g.~the Killing form if $\g$ is simple). Under this paring we have 
$\n \cong \lu^*$, which is, in fact, an isomorphism of $B$-modules. Likewise, we have $\h\cong \h^*$ as $G$-modules. 

Later we will be concerned with \emph{($G$-)equivariant vector bundles} over $X$ and their associated (sheaf) cohomology groups. Such bundles are necessarily of the form $G\times_B V$, where $V$ is a $B$-representation. Their cohomology groups then admit a natural $G$-action. 
Some vector bundles over $X$ can be upgraded (not uniquely) to equivariant bundles 
(see \ref{eg-equiv-bundle} for a non-uniqueness example). 
For instance, we will identify
\[
TX\cong G\times_B(\g/\mathfrak{b}), \quad \quad T^*X\cong G\times_B(\g/\mathfrak{b})^*.
\]
Clearly, as $B$-modules, we have $\g/\mathfrak{b}\cong \lu$. Moreover, via the $G$-equivariant bilinear form on $\g$, we identify  $(\g/\mathfrak{b})^*\cong \n$, so that
\begin{equation}\label{eqn-tan-cotan-as-equiv-bdl}
TX\cong G\times_B \lu, \quad \quad T^*X\cong G\times_B \n.
\end{equation}

The group $G$ acts on $X$ on the left, and $B$ is the stabilizer subgroup of the identity coset. If $G\times_B V$ is an equivariant vector bundle on $X$, then $B$ acts naturally on the fiber over the identity coset $eB\in X$, which is no other than the vector space $V$ regarded as a variety. It follows that, if $V$ is an indecomposable representation of $B$, then the bundle $G\times_B V$ can not decompose into a nontrivial direct sum of equivariant subbundles. This useful property is a special case of the following well known lemma.

\begin{lemma}\label{lemma-equivalent-equivariant-coh}
Let $G$ be an algebraic group and $P$ be a Zariski closed subgroup. Suppose $V$ is a linear $P$-representation. Then the category of $G$-equivariant coherent sheaves on the variety $G\times_P V$ is equivalent to the category of $P$-equivariant sheaves on $V$.
\end{lemma}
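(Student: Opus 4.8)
\emph{Overall strategy.} I would prove this as an instance of faithfully flat descent along a torsor, by exhibiting two explicit quasi-inverse functors. Recall that $G\times_P V$ is the quotient of $G\times V$ by the free $P$-action $p\cdot(g,v)=(gp^{-1},pv)$, and that the quotient map
\[
q\colon G\times V \lra G\times_P V,\qquad (g,v)\mapsto [g,v]
\]
is a $P$-torsor: it is the base change of the $P$-torsor $G\to G/P$ along the bundle projection $a\colon G\times_P V\to G/P$, $[g,v]\mapsto gP$, so it is in particular faithfully flat and fits in a Cartesian square
\[
\begin{CD}
G\times V @>{\ q\ }>> G\times_P V\\
@V{\mathrm{pr}_1}VV @VV{a}V\\
G @>>> G/P
\end{CD}
\]
whose vertical arrows are the first projection and the bundle projection $a$, and whose bottom arrow is the quotient map. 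Two elementary observations will be used repeatedly: the second projection $\mathrm{pr}_2\colon G\times V\to V$ is $P$-equivariant (source with the action above, target with the given action), and left multiplication by $G$ on the first factor of $G\times V$ commutes with the $P$-action, hence descends to a $G$-action on $G\times_P V$ for which $a$ is equivariant.

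\emph{The two functors.} Sending a $P$-equivariant coherent sheaf $\F$ on $V$ to $\mathrm{pr}_2^*\F$ produces a $P$-equivariant coherent sheaf on $G\times V$; since $q$ is a $P$-torsor, faithfully flat descent identifies $P$-equivariant quasi-coherent sheaves on $G\times V$ with quasi-coherent sheaves on $G\times_P V$, and this identification preserves coherence because coherence is fppf-local. Denote by $\op{Ind}(\F)$ the resulting descent of $\mathrm{pr}_2^*\F$; the residual $G$-action on $G\times V$ equips $\op{Ind}(\F)$ with a $G$-equivariant structure. In the other direction, restrict a $G$-equivariant coherent sheaf $\mathcal G$ on $G\times_P V$ to the fiber $a^{-1}(eP)$. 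Since $P=\op{Stab}_G(eP)$ preserves this fiber, the equivariant structure of $\mathcal G$ restricts to a $P$-equivariant structure on $\mathcal G|_{a^{-1}(eP)}$, and $v\mapsto[e,v]$ is a $P$-equivariant isomorphism $V\xrightarrow{\ \sim\ }a^{-1}(eP)$; transporting along it yields a $P$-equivariant coherent sheaf $\op{Res}(\mathcal G)$ on $V$. Functoriality of $\op{Ind}$ and $\op{Res}$ is immediate.

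\emph{Quasi-inverseness.} For $\op{Res}\circ\op{Ind}\cong\op{id}$: by construction $q^*\op{Ind}(\F)\cong\mathrm{pr}_2^*\F$ compatibly with $P$-structures, and pulling $\mathrm{pr}_2^*\F$ back along $\{e\}\times V\hookrightarrow G\times V$ — which $q$ maps isomorphically onto $a^{-1}(eP)$ — returns $\F$. For $\op{Ind}\circ\op{Res}\cong\op{id}$: the $G$-equivariant structure of $\mathcal G$, read over $G\times a^{-1}(eP)\cong G\times V$, is precisely an isomorphism $q^*\mathcal G\cong\mathrm{pr}_2^*\op{Res}(\mathcal G)$ whose compatibility condition is the descent cocycle, so $\mathcal G$ is the descent of $\mathrm{pr}_2^*\op{Res}(\mathcal G)$, i.e. $\op{Ind}(\op{Res}(\mathcal G))$; one then checks naturality and compatibility with the ambient $G$- and $P$-structures, which is routine bookkeeping. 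The cleanest conceptual packaging is to note that $G$ acts freely along the first factor of $G\times V$, so the quotient stacks satisfy $[(G\times_P V)/G]\cong[(G\times V)/(G\times P)]\cong[V/P]$, and coherent sheaves on a quotient stack are by definition equivariant coherent sheaves; I would record this as the underlying reason.

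\emph{Main obstacle.} The only real content is the descent step: invoking faithfully flat descent in the equivariant setting, verifying that it sends coherent sheaves to coherent sheaves, and matching the auxiliary $G$-equivariant structures with descent cocycles; everything else is formal. A minor point to keep in mind is that $G$ is only assumed to be an algebraic group — though in every application in this paper it is reductive, hence affine — so one should confirm that the torsor $q$ and the scheme $G\times_P V$ are tame enough for descent to apply, which is standard.
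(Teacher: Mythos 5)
Your proof is correct and follows essentially the same route as the paper, which simply invokes ``induction and restriction to the fiber over the identity coset'' of $\mathrm{pr}\colon G\times_P V\to G/P$ --- precisely your functors $\op{Ind}$ and $\op{Res}$. You have merely filled in the faithfully flat descent along the $P$-torsor $G\times V\to G\times_P V$ (equivalently, the identification $[(G\times_P V)/G]\cong[V/P]$) that the paper leaves implicit, and that verification is accurate.
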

\begin{proof}The equivalence is provided by induction and restriction to the fiber over the identity coset of the canonical projection map
$\mathrm{pr}:G\times_PV\lra G/P$.
\end{proof}

\begin{example}\label{eg-equiv-bundle}
Consider the adjoint bundle $G\times_B\g$ on $X$. Let $\nu$ be the canonical $G$-equivariant projection map $X=G/B\longrightarrow G/G $. Since $\g$ is a $G$-representation, this bundle is the equivariant pull-back of the $G$-bundle $\g$ over the point $G/G$:
\[
G\times_B\g\cong \nu^*(G\times_G \g).
\]
Therefore, as a non-equivariant bundle,
\[
G\times_B \g\cong G\times_B (\mathbb{C}^{\oplus \mathrm{dim}\g})
\]
is a trivial vector bundle on $X$. But they are not isomorphic as equivariant bundles, for $\g$ is an indecomposable $B$-module generated by a highest weight vector. Upon taking global sections, the left-hand side gives us the adjoint representation of $G$, while the right-hand side results in $\mathrm{dim}\g$ copies of the trivial $G$-representation.
\end{example}

Let $\mathcal{N}$ be the nilpotent cone of $\g$, which consists of elements in $\g$ that can be conjugated inside $\n$ under the adjoint action of $G$. The \emph{Springer resolution} of $\mathcal{N}$, denoted  by
\begin{equation}
\pi:\Nt\longrightarrow \mathcal{N}
\end{equation}
is a resolution of the singularity for $\mathcal{N}$. (See \cite[Chapter III]{CG} for the standard facts about the Springer resolution). As an algebraic variety, $\Nt$ can be identified with the cotangent bundle to $X$:
\begin{equation}\label{eqn-cotan-bundle-as-rsltn}
\Nt\cong T^*X\cong G\times_B \n.
\end{equation}
Elements in $\Nt$ are thus given by pairs $(g,x)$, where $g\in G$ and $x\in \n$, subject to the identification $(g,x)=(gb^{-1}, \mathrm{Ad}_b(x)))$. Let $\op{pr}: T^*X \lra X$ be the canonical projection map that sends the equivalence 
class of the pair $(g,x)$ to the coset $gB$. It is evidently $G$-equivariant. 

Let the group $\mathbb{C}^*$ act on $X$ trivially, and define its action on $\Nt$ by rescaling the fibers of $\op{pr}$, which are all isomorphic to the vector space $\n$, via the character $z\mapsto z^{-2}$. This action commutes with the action of $G$ on $\Nt$ and $X$. It is easy to check that, with respect to these actions, $\op{pr}$ is in fact $G\times \mathbb{C}^*$-equivariant. 

\paragraph{Relation to the quantum groups.}We will be interested in calculating some particular sheaf cohomology groups over $X$ that are used in the description of  
the center of the principal block of the small quantum group associated with the Lie algebra $\g$ according to the main theorem in 
\cite{BeLa}. Let us recall the result and the setting. 

 Let $R$ be a finite reduced root system in a 
$\C$-vector space $\h$ and  fix
a basis of simple roots $S =\{\al_i, i \in I\}$. Let $\check\al$ denote
the coroot corresponding to the root $\alpha \in R$. The Cartan matrix
is given by $a_{ij} = \langle \al_i, \check\al_j \rangle$, where
$\langle \cdot , \cdot \rangle$ is the canonical pairing
$\h^* \times \h \to \C$. Let $W$ be the Weyl group of $R$.
 There exists a unique
$W$-invariant scalar product in $\h$ such that $(\al, \al)=2$ for
any short root $\al \in R$. Set $d_i = \frac{1}{2} (\al_i, \al_i)
\in \{ 1,2,3 \}$ for each $i \in I$.
We denote by $\Y =\Z R$ the root lattice, and by
$\X = \{ \mu \in \h : \langle \mu, \check\al \rangle 
\in \Z\;\; \forall \; \al \in R \}$
the weight lattice corresponding to $R$. The coweight lattice is
$\check\Y = \op{Hom}(\Y, \Z) \in \h^*$. Let $R^+$ be the set of positive roots,
define the dominant weights by
$\X^+ = \{ \mu \in \X : \langle \mu, \check\al \rangle
\geq 0 \;\; \forall \; \al \in R^+ \}$
and set $\Y^+ = \Y \cap \X^+$.

Let $G$ be a complex connected semisimple group of adjoint type 
with the Lie algebra $\g$ corresponding to the root system $R$.

Let $\C(q)$ denote the field of rational functions in the variable $q$.
We denote by $U_q(\g) = U_q $ the Drinfeld-Jimbo quantized enveloping algebra
of $\g$. It is generated over $\C(q)$ by $E_i, F_i, i \in I$ and
$K^{\pm 1}_\mu, \mu \in \check\Y$ subject to well-known relations,
see e.g. \cite{Lus4}. We will write $K_i$ for $K_{d_i \check \al_i}$.
The algebra $U_q$ is a Hopf algebra over $\C(q)$.

Fix an odd positive integer $l$ which is greater than the Coxeter
number of the root system,
prime to the index of connection $|\X/\Y|$ and
prime to $3$ if $R$ has a component
of type $G_2$. Choose a primitive $l$-th root of unity $\xi \in \C$
and let $\A \subset \C(q)$ be the ring localized at $\xi$, and $\m$
the maximal ideal of $\A$. For any $n \in \N$ set
$[n]_d = \frac{q^{dn} - q^{-dn}}{q^d -q^{-d}}$ and
$[n]_d ! = \prod_{s=1}^n \frac{q^{ds} - q^{-ds}}{q^d -q^{-d}}$.

In $U_q$ consider the divided powers of the generators
$E_i^{(n)} = E_i^n /[n]_{d_i}!, F_i^{(n)} = F_i^n/[n]_{d_i}!,
i \in I, n \geq 1$,
and $\big[{ {K_\mu,m} \atop{n}} \big] $ as defined in \cite{Lus4}. The
Lusztig's integral form $\Ul_\A$ is defined as an $\A$-subalgebra of
$U_q(\g)$ generated by these elements. $\Ul_\A$ is a Hopf subalgebra of
$U_q$. The Lusztig quantum algebra
at a root of unity $\Ul$ is defined by specialization of $\Ul_\A$ at $\xi$:
 $\Ul = \Ul_\A / \m \Ul_\A$. It has a Hopf algebra structure over $\C$.

The {\it small quantum group} $u_q(\g)$ is the subalgebra
in $\Ul$ generated by the elements  $E_i$, $F_i$ and the Cartan elements $\frac{K_i -K_i}{q^{d_i} -q^{-d_i}}$, $i \in I$. Since we have assumed $l$ to be odd, $\ul$
is a Hopf algebra over $\C$.

Let $\op{Rep}(\ul)$ be the category of finite-dimensional
$\ul$-modules over $\C$. The finite-dimensional Hopf algebra $\ul$ decomposes as a left $\ul$-module
into a finite direct sum of finite-dimensional submodules.
Denote by $\ul_0$ the largest direct summand for which all its
simple subquotients belong to the principal block of the category
$\op{Rep}(\ul)$. Then $\ul_0$ is a two-sided ideal in $\ul$,
which will be called the principal block of $\ul$. Let $\zl$ denote the center of $\ul$. 
It decomposes into a direct
sum of ideals according to the block decomposition of $\ul$.
Set $\zl_0 = \zl \cap \ul_0$.

Recall that we have a $G$-equivariant isomorphism of vector bundles
$G \times_{B} \n \cong T^*X = \Nt$, and that the
multiplicative group acts on $\Nt$ by dilations on the fibers: an
element $z \in \C^*$ acts on $\n$ by multiplication by $z^{-2}$.
Consider the  coherent sheaf of poly-vector fields $\Lambda^\bullet
T\Nt$ on $\Nt$. The direct image of this sheaf to $X$ is in fact
bi-graded. The first grading is the natural grading
$\Lambda^\bullet T\Nt =\oplus_{j = 0}^{ \op{dim}(\Nt)} \Lambda^j
T\Nt$. The second grading comes from the induced action of $\C^*$
on $\Nt$. We will write $\mathrm{pr}_*(\Lambda^j T\Nt)^k$ for the $(j,k)$-th
component with respect to this bi-grading; this is a locally free
$G$-equivariant coherent sheaf on $X$. Notice that,
because of the definition of the $\C^*$-action,
$\mathrm{pr}_*(\Lambda^j T\Nt)^k=0$ for odd $k$.

Then we have the following result \cite{BeLa}: 

\begin{theorem} \label{Hoh}
There exists an isomorphism of algebras between
the total Hochschild cohomology of the principal block $\ul_0$ and
the total cohomology of $\Nt$ with coefficients
in $\Lambda^\bullet T\Nt$; here the algebra structure on the second space
comes from multiplication in the exterior algebra $\Lambda^\bullet T\Nt$.
The isomorphism is compatible with the grading as follows:
$$  \op{HH}^s (\ul_0) \cong \bigoplus_{i+j+k=s} \op{H}^i(\Nt, \Lambda^j T\Nt)^k. $$
In particular, 
$$ \zl_0 \cong \op{HH}^0 (\ul_0) \cong \bigoplus_{i+j+k=0} \op{H}^i(\Nt,\Lambda^j T\Nt)^k. $$
\end{theorem}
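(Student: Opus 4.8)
The plan is to deduce the statement from three inputs: the derived equivalence of Arkhipov--Bezrukavnikov--Ginzburg~\cite{ABG} between the principal block and coherent sheaves on $\Nt$, the invariance of Hochschild cohomology (as a graded algebra) under equivalences of dg-enhanced derived categories, and the Hochschild--Kostant--Rosenberg decomposition on the smooth variety $\Nt$. Schematically, $\op{HH}^\bullet(\ul_0)\cong\op{HH}^\bullet(\Nt)\cong\bigoplus_{i,j}\op{H}^i(\Nt,\Lambda^j T\Nt)$ --- where $\op{HH}^\bullet(\Nt)$ denotes the Hochschild cohomology of the category of coherent sheaves on $\Nt$ --- and the three gradings on the far right assemble into the single Hochschild grading on the left.

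The first step is to recall the ABG equivalence in its graded (``mixed'') form: a mixed enhancement of the bounded derived category of the principal block $\ul_0$ is equivalent, compatibly with the natural dg enhancements, to $D^b\coh^{G\times\C^*}(\Nt)$, the geometric $\C^*$ recording the internal ``mixed'' grading on the quantum side and the $G$-action recording its symmetry. Forgetting the internal grading on $\ul_0$ corresponds on the geometric side to forgetting $\C^*$-equivariance while still remembering the $\C^*$-weight as an auxiliary grading; this is the origin of the third grading $k$ in the statement.

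Next, since Hochschild cohomology --- understood intrinsically as the graded endomorphism algebra of the identity functor, equivalently $\op{Ext}^\bullet_{\Nt\times\Nt}(\Ox_\Delta,\Ox_\Delta)$ on the geometric side --- is transported by such an equivalence together with its Yoneda (cup) product, we identify $\op{HH}^\bullet(\ul_0)$ as a graded algebra with $\op{HH}^\bullet(\Nt)$, now carrying a cohomological grading and the $\C^*$-weight grading. One then invokes the HKR theorem for the smooth variety $\Nt$, $\op{HH}^n(\Nt)\cong\bigoplus_{i+j=n}\op{H}^i(\Nt,\Lambda^j T\Nt)$, the factor $\Lambda^j$ contributing $j$ to the Hochschild degree; as $\Nt$ is holomorphically symplectic this isomorphism can be taken to respect products, matching cup product with the wedge product of polyvector fields. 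Refining the HKR isomorphism by the $\C^*$-weight $k$ and combining the three gradings $i$, $j$, $k$ into the single Hochschild degree $s$ yields $\op{HH}^s(\ul_0)\cong\bigoplus_{i+j+k=s}\op{H}^i(\Nt,\Lambda^j T\Nt)^k$; the case $s=0$ is the asserted description of $\zl_0$.

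The main obstacle, and where the genuine work resides, is the grading and equivariance bookkeeping across the ABG equivalence. One must: (a) determine exactly how the internal grading on the quantum side is matched with the $\C^*$-weight, including any shift or Tate twist, so that the degrees add as $i+j+k=s$ with no offset; (b) make the HKR decomposition work $\C^*$-equivariantly and check that each graded piece $\op{H}^i(\Nt,\Lambda^j T\Nt)^k$ is finite-dimensional --- it is, since $\op{pr}_*(\Lambda^j T\Nt)^k$ is coherent on the \emph{proper} variety $X=G/B$ --- and that, for each fixed $s$, only finitely many triples $(i,j,k)$ with $i+j+k=s$ contribute, the $\C^*$-grading being indispensable here precisely because $\Nt$ itself is non-proper; and (c) track the three multiplicative structures (Yoneda product, cup product of sheaf cohomology classes, wedge product of polyvector fields) consistently, which compels one to work with the equivalence at the level of dg- or $A_\infty$-categories rather than merely triangulated categories.
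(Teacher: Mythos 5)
The paper does not actually prove Theorem \ref{Hoh}: it is quoted verbatim from \cite{BeLa}, so the only meaningful comparison is with the argument of that reference. Your outline --- the ABG equivalence, invariance of Hochschild cohomology (viewed as $\op{Ext}^\bullet_{\Nt\times\Nt}(\Ox_\Delta,\Ox_\Delta)$, the endomorphisms of the identity functor) under dg-enhanced equivalences, and the HKR decomposition on the smooth variety $\Nt$, with the internal weight feeding into the total degree $s=i+j+k$ --- is exactly the strategy of that proof, and your items (a)--(c) correctly locate where the real work lies; your finiteness remark via $\op{pr}_*(\Lambda^jT\Nt)^k$ being coherent on the proper variety $X$ is also the mechanism the paper itself exploits later.

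There is, however, one step that is misstated in a way that would change the answer if taken literally. In \cite{ABG} the category $D^b\coh^{G\times\C^*}(\Nt)$ is the geometric counterpart of the principal block of the \emph{big} (Lusztig) quantum group $\Ul$, not of $\ul_0$; the small quantum group block corresponds to de-equivariantizing the $G$-action, keeping only the $\C^*$ (mixed) structure, and it is the passage from the graded category to the ungraded algebra --- equivalently, summing $\op{Ext}^\bullet(\Ox_\Delta,\Ox_\Delta\langle k\rangle)$ over all $\C^*$-twists --- that produces the third grading $k$ and the shift $i+j+k=s$. This is not mere bookkeeping: since $G$ is reductive, self-Ext of the diagonal computed inside the $G\times\C^*$-equivariant category gives only the $G$-invariants $\bigl(\bigoplus_{i+j+k=s}\op{H}^i(\Nt,\Lambda^jT\Nt)^k\bigr)^G$, which for $s=0$ is essentially the previously known central subalgebra of \cite{La} inside the center of the big quantum group, strictly smaller than $\zl_0$. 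In the theorem as used in this paper the $G$-action survives only as a symmetry of $\op{HH}^\bullet(\ul_0)$ (which is why each $\op{H}^i(\Nt,\Lambda^jT\Nt)^k$ is later analyzed by its $G$-isotypic components), so your first step should identify the mixed block of $\ul_0$ with $D^b\coh^{\C^*}(\Nt)$, with $G$ acting as an additional compatibility, the $G$-equivariant category being reserved for $\Ul$. With that correction, the rest of your plan coincides with the proof in \cite{BeLa}.
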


\paragraph{Classical results.}To compute the sheaf cohomology on the right-hand side of the isomorphism in Theorem \ref{Hoh}, 
one of the basic tools at our disposal is the Borel-Weil-Bott (BWB) Theorem~\cite{B} for cohomology of line bundles over $X$, which holds for the flag variety associated to any complex reductive Lie group. The equivariant line bundles on $X$ are classified by one-dimensional representations of $B$, which correspond bijectively to the characters of the maximal torus $H$.

\begin{theorem}[Borel-Weil-Bott] \label{BWB}
Let $\lambda \in \X$. If $\lambda +  \rho$ is  singular, then  $\mH^i(X, \Lx_\lambda) = 0$ for all $i$.  If $\lambda + \rho$ is regular, then there exists a unique nonvanishing cohomology group
$\mH^{i(\lambda)}(X, \Lx_\lambda)$, where $i(\lambda)$ is the length of the unique element $w$  of the Weyl group
such that $w(\lambda+\rho)$ is dominant. In this case, $\mH^{i(\lambda)}(X, \Lx_{\lambda})$ is an irreducible
representation of $G$ with highest weight $w \cdot \lambda = w(\lambda + \rho) - \rho$.
\end{theorem}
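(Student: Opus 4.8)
The plan is to use the classical two-step strategy: first prove the statement when $\lambda$ is dominant (the Borel--Weil part), and then transport it to arbitrary $\lambda\in\X$ by iterating a ``reflection functor'' built from the $\mathbb{P}^1$-fibrations
\[
p_i\colon X = G/B \lra G/P_i ,
\]
where $P_i\supset B$ is the minimal parabolic attached to a simple root $\al_i$.

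For the dominant case I would identify $\mH^0(X,\Lx_\lambda)$ with the space of regular functions $f\colon G\to\C$ satisfying $f(gb)=\lambda(b)f(g)$ for $b\in B$ (for the appropriate convention defining $\Lx_\lambda$), and conclude, via the algebraic Peter--Weyl decomposition of $\C[G]$ and the theorem of the highest weight, that for $\lambda\in\X^+$ this space is the irreducible $G$-module $V_\lambda$ of highest weight $\lambda$. For the vanishing $\mH^i(X,\Lx_\lambda)=0$ when $i>0$ and $\lambda\in\X^+$: since the canonical bundle of $X$ is $\Lx_{-2\rho}$, one has $\Lx_\lambda\cong K_X\otimes\Lx_{\lambda+2\rho}$ with $\lambda+2\rho$ strictly dominant, hence $\Lx_{\lambda+2\rho}$ ample, so Kodaira vanishing applies; the boundary case of $\lambda$ dominant but not strictly dominant is disposed of by semicontinuity, or by the fibration argument below. (A purely algebraic argument via Frobenius splitting is also available and is insensitive to the characteristic of the ground field.)

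For the propagation step, fix a simple root $\al_i$ and consider $p_i$, whose fibres are copies of $\mathbb{P}^1$; the restriction of $\Lx_\lambda$ to a fibre is $\Ox_{\mathbb{P}^1}(n)$ with $n=\langle\lambda,\check\al_i\rangle$. Cohomology and base change over the fibres give three cases. (i) If $n=-1$, all fibrewise cohomology vanishes, so $R^\bullet(p_i)_*\Lx_\lambda=0$ and hence $\mH^\bullet(X,\Lx_\lambda)=0$ by the Leray spectral sequence. (ii) If $n\geq 0$, then $R^{>0}(p_i)_*\Lx_\lambda=0$ and $\mH^\bullet(X,\Lx_\lambda)\cong\mH^\bullet(G/P_i,(p_i)_*\Lx_\lambda)$. (iii) If $n\leq -2$, then $(p_i)_*\Lx_\lambda=0$ while $R^1(p_i)_*\Lx_\lambda$ is locally free, so Leray degenerates to $\mH^i(X,\Lx_\lambda)\cong\mH^{i-1}(G/P_i,R^1(p_i)_*\Lx_\lambda)$; relative Serre duality along the fibres (using $\omega_{X/(G/P_i)}\cong\Lx_{-\al_i}$ up to a pullback from the base) identifies $R^1(p_i)_*\Lx_\lambda$ with $(p_i)_*\Lx_{s_i\cdot\lambda}$, where $s_i\cdot\lambda=s_i(\lambda+\rho)-\rho$, so combining (iii) with (ii) yields the shift isomorphism $\mH^i(X,\Lx_\lambda)\cong\mH^{i-1}(X,\Lx_{s_i\cdot\lambda})$. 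Now choose $w\in W$ with $w(\lambda+\rho)$ dominant and a reduced expression $w=s_{i_1}\cdots s_{i_\ell}$, $\ell=\ell(w)$. A standard property of reduced words together with the geometry of the dominant chamber shows that, applying $s_{i_\ell},\dots,s_{i_1}$ in turn to $\lambda$, the pairing encountered at each step is either $\leq -2$ or exactly $-1$, and is $\leq -2$ at every step precisely when $\lambda+\rho$ is regular. If $\lambda+\rho$ is regular, the shift isomorphism applies at all $\ell$ steps, giving $\mH^i(X,\Lx_\lambda)\cong\mH^{i-\ell}(X,\Lx_{w\cdot\lambda})$ with $w\cdot\lambda$ dominant, so the dominant case finishes the proof: the cohomology vanishes unless $i=\ell=i(\lambda)$, where it equals $V_{w\cdot\lambda}$. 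If $\lambda+\rho$ is singular, let $k$ be the first step with pairing $-1$; the previous $k-1$ steps are genuine shifts, so $\mH^\bullet(X,\Lx_\lambda)$ is a degree shift of $\mH^\bullet(X,\Lx_\mu)$ for the weight $\mu$ reached after $k-1$ steps, which vanishes by case (i).

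I expect the main obstacle to be twofold. First, the base-change/duality identification in case (iii) — that $R^1(p_i)_*\Lx_\lambda$ is not merely locally free of the expected rank but is, $G$-equivariantly, the line-bundle pushforward $(p_i)_*\Lx_{s_i\cdot\lambda}$ — requires a careful computation of the relative dualizing sheaf of $p_i$ and compatibility with the equivariant structures. Second, the combinatorial input — that any reduced word for $w$ with $w(\lambda+\rho)$ dominant produces exactly the claimed pattern of pairings, with the number of genuine shifts equal to $\ell(w)$ — is the step where the length function and the dot action are reconciled, and it is also what pins the nonvanishing degree $i(\lambda)$ to $\ell(w)$. The input results (Borel--Weil for $\mH^0$ and Kodaira vanishing) can be quoted, so the $\mathbb{P}^1$-fibration bookkeeping is where the real work lies.
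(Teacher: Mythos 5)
The paper does not prove Theorem \ref{BWB} at all: it is quoted as a classical result with a reference to Bott \cite{B} and used only as a computational tool, so there is no proof of record to compare with. What you propose is essentially Demazure's simplification of Bott's theorem: Borel--Weil plus Kodaira vanishing for dominant weights (note no boundary case actually arises, since $\lambda+2\rho$ is strictly dominant for every dominant $\lambda$, so Kodaira applies uniformly), followed by induction along the $\mathbb{P}^1$-fibrations $p_i\colon G/B\lra G/P_i$. Bott's original route in the cited paper goes instead through homogeneous vector bundles and (relative) Lie algebra cohomology of $\n$ --- precisely the machinery the authors quote separately as Theorem \ref{Bott_rel_lie} and Lemma \ref{H0_H1_rel_lie}; your route is more elementary and self-contained, while Bott's yields the statement for general homogeneous bundles, which the paper also needs. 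Your outline is correct: the three fibrewise cases, the equivariant identification $R^1(p_i)_*\Lx_\lambda\cong (p_i)_*\Lx_{s_i\cdot\lambda}$ (which does hold, the self-duality of irreducible $SL_2$-modules making the torus characters match), and the combinatorial input, which is the inversion-set computation $\langle s_{i_{k+1}}\cdots s_{i_\ell}(\lambda+\rho),\check\al_{i_k}\rangle=\langle w(\lambda+\rho),\,s_{i_1}\cdots s_{i_k}(\check\al_{i_k})\rangle\leq 0$, because $s_{i_1}\cdots s_{i_{k-1}}(\al_{i_k})$ runs over the inversion set of $w^{-1}$ for a reduced word and $w(\lambda+\rho)$ is dominant.

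Two small corrections. First, in the singular case you assert that some step along the chosen reduced word has pairing exactly $-1$; with an arbitrary choice of $w$ this can fail (take $\lambda+\rho$ dominant and singular and $w=e$: there are no steps at all, yet all cohomology must vanish). The fix is immediate: after performing all the shift steps you reach a weight $\mu$ with $\mu+\rho$ dominant and singular, and a dominant singular weight necessarily lies on the wall of a \emph{simple} root, so $\langle\mu,\check\al_i\rangle=-1$ for some $i$ and your case (i) gives $\mH^\bullet(X,\Lx_\mu)=0$, hence $\mH^\bullet(X,\Lx_\lambda)=0$; alternatively, choose $w$ maximal in its coset modulo the stabilizer of $\lambda+\rho$ so that a zero-pairing step does occur. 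Second, the parenthetical claim that a Frobenius-splitting argument makes the statement insensitive to the characteristic is not correct for the full theorem: only the vanishing for dominant weights (Kempf's theorem) survives in positive characteristic, while Bott's description of the cohomology of non-dominant weights fails there. This is harmless for the paper, which works over a field of characteristic zero, but the aside should be dropped or restricted to the dominant case.
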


Another useful result is Borel's theorem on cohomology of $X$, which holds for a general complex reductive group  (see, e.g. \cite{Hil}).
\begin{theorem}[Borel] \label{thm-flag-cohomology} 
There is an algebra isomorphism between the cohomology  of the flag variety $X$  and  the algebra of coinvariants of the corresponding root system,
\[ 
\mH^\bullet(X) \cong \bigoplus_{i \geq 0} \mH^i(X, \Omega_X^i) \cong \frac{S^\bullet(\h)}{S^\bullet(\h)^W_+} .
\]
Here $S^\bullet(\h)$ is the symmetric algebra of a Cartan subalgebra $\h \subset \g$, and $S^\bullet(\h)^W_+$ denotes the augmentation ideal of the Weyl group invariants in $S^\bullet (\h)$.
\end{theorem}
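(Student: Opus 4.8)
The first isomorphism is a consequence of the Bruhat decomposition together with Hodge theory. Recall that $X=G/B$ is partitioned into the affine cells $BwB/B$, $w\in W$, all of even real dimension, so $X$ carries a CW structure with only even-dimensional cells; hence $\mH^\bullet(X;\C)$ is concentrated in even degrees, is free of total rank $|W|$ over $\C$, and has the classes of the closures $\overline{BwB/B}$ as a basis. As these cells are algebraic subvarieties, the Hodge structure on $\mH^{2k}(X)$ is pure of type $(k,k)$, which is precisely the assertion $\mH^\bullet(X)\cong\bigoplus_i \mH^i(X,\Omega_X^i)$. It remains to identify this graded ring with the coinvariant algebra. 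Using the $G$-invariant form to identify $\h\cong\h^{*}$ as $W$-modules, I regard a weight $\lambda$ as an element of $\h$; since $\Lx_{\lambda+\mu}\cong \Lx_\lambda\otimes\Lx_\mu$, the assignment $\lambda\mapsto c_1(\Lx_\lambda)\in\mH^2(X;\C)$ is additive and extends to a homomorphism of graded $\C$-algebras
\[
\phi\colon S^\bullet(\h)\lra \mH^{2\bullet}(X;\C)
\]
(doubling the cohomological degree). The claim is that $\phi$ is surjective with kernel $S^\bullet(\h)^W_+$.

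That $\phi$ kills $S^\bullet(\h)^W_+$ is most transparent via classifying spaces. Since $B=HN$ with $N$ unipotent, $G/H\to G/B=X$ is a bundle with contractible fibres, so $X\simeq G/H$, which is the homotopy fibre of $\mathrm{B}H\to\mathrm{B}G$; the induced map $\mH^\bullet(\mathrm{B}H;\C)=S^\bullet(\h)\to\mH^\bullet(X;\C)$ is precisely $\phi$. By Borel's computation $\mH^\bullet(\mathrm{B}G;\C)\cong S^\bullet(\h)^W$, realised inside $S^\bullet(\h)=\mH^\bullet(\mathrm{B}H;\C)$ as the invariants, and the composite $\mH^{>0}(\mathrm{B}G)\to\mH^\bullet(\mathrm{B}H)\to\mH^\bullet(X)$ vanishes because $X$ is the fibre; hence $\phi(S^\bullet(\h)^W_+)=0$. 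Concretely, the same vanishing follows from the argument of Example~\ref{eg-equiv-bundle}: for a $G$-module $V$ the bundle $G\times_B V$ is trivial, whereas filtering $V$ by one-dimensional $B$-modules presents it as an iterated extension of the $\Lx_\mu$, $\mu\in\op{wt}(V)$, so comparing total Chern classes gives $1=\prod_{\mu\in\op{wt}(V)}(1+\phi(\mu))$; since $\op{wt}(V)$ is $W$-stable, $\phi$ annihilates the elementary symmetric functions of the weights, and as $V$ varies these exhaust $S^\bullet(\h)^W_+$. Thus $\phi$ descends to $\overline\phi\colon S^\bullet(\h)/S^\bullet(\h)^W_+\to\mH^\bullet(X;\C)$. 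For surjectivity it suffices to know that $\mH^\bullet(X;\C)$ is generated as an algebra in cohomological degree $2$, for $\mH^2(X;\C)$ is spanned by the classes $c_1(\Lx_\lambda)$ and hence lies in the image of $\phi$. Degree-two generation is the classical input from Schubert calculus: by Poincaré duality on $X$ and Chevalley's rule for the product of a Schubert class with a Schubert divisor, one writes each Schubert class, inductively on codimension, as a polynomial in the divisor classes.

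Finally, $\overline\phi$ is an isomorphism by a dimension count. By Chevalley's theorem $S^\bullet(\h)$ is a free module of rank $|W|$ over $S^\bullet(\h)^W$, so $S^\bullet(\h)/S^\bullet(\h)^W_+\cong S^\bullet(\h)\otimes_{S^\bullet(\h)^W}\C$ has $\C$-dimension $|W|$; by the Bruhat decomposition $\dim_\C \mH^\bullet(X;\C)=|W|$ as well. A surjective $\C$-linear map between finite-dimensional vector spaces of equal dimension is bijective, so $\overline\phi$ is an isomorphism of graded $\C$-algebras, completing the proof. The one genuinely geometric step — and where I expect to need the most care — is the degree-two generation of $\mH^\bullet(X;\C)$ invoked for surjectivity. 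If one prefers to bypass it, the same conclusion is reached homologically: the Eilenberg--Moore spectral sequence of $X\simeq G/H\to\mathrm{B}H\to\mathrm{B}G$ has $E_2=\op{Tor}^{S^\bullet(\h)^W}_{\ast,\ast}(S^\bullet(\h),\C)$, which collapses to $S^\bullet(\h)\otimes_{S^\bullet(\h)^W}\C$ in homological degree $0$ since $S^\bullet(\h)$ is free, hence flat, over $S^\bullet(\h)^W$ by Chevalley, and the resulting edge isomorphism is $\overline\phi$.
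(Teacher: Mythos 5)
The paper does not prove this statement at all: it is quoted as classical background, with a pointer to \cite{Hil}, and nothing in the later arguments depends on any particular proof of it. So there is no internal proof to compare against; what you have written is a reconstruction of the standard (Borel-style) argument, and in outline it is correct: even-dimensional Bruhat cells give vanishing of odd cohomology and $(k,k)$-purity, hence the Hodge-theoretic identification $\mH^{2k}(X;\C)\cong \mH^k(X,\Omega_X^k)$; the characteristic homomorphism $\phi(\lambda)=c_1(\Lx_\lambda)$ kills the invariants; and the dimension count $|W|=\dim_\C S^\bullet(\h)/S^\bullet(\h)^W_+$ (Chevalley) versus $|W|$ Schubert classes finishes once surjectivity is known.

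Two soft spots are worth flagging. First, in the Chern-class argument the phrase ``as $V$ varies these exhaust $S^\bullet(\h)^W_+$'' overstates what you get: the elementary symmetric functions of the weights of representations do not literally span $S^\bullet(\h)^W_+$; what is true (and what you need) is that the ideal they generate contains it, which follows from Newton's identities together with the fact that power sums over $W$-orbits of weights span $S^k(\h)^W$ (triangularity of weights of the $L_\lambda$ with respect to orbit sums, in characteristic zero). Second, the surjectivity step is the real content, and your Schubert-calculus justification (``Chevalley's rule plus Poincar\'e duality gives degree-two generation'') is asserted rather than argued — degree-two generation of $\mH^\bullet(X;\C)$ is essentially equivalent to the surjectivity of $\phi$, and deducing it from Monk--Chevalley alone requires a genuine rank argument you have not supplied. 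Your fallback via the fibration $G/H\to \mathrm{B}H\to \mathrm{B}G$ and the Eilenberg--Moore (or Leray) spectral sequence, collapsing because $S^\bullet(\h)$ is free over $S^\bullet(\h)^W$, does close this gap and is the standard complete route; note, however, that both it and your kernel argument lean on $\mH^\bullet(\mathrm{B}G;\C)\cong S^\bullet(\h)^W$, which is itself a theorem of Borel of comparable depth — acceptable as classical input, but it should be acknowledged as such rather than treated as free.
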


We will also use the following theorem of Bott that relates the sheaf cohomology of vector bundles over $X = G/B$ with the relative cohomology of Lie algebras $(\lb, \, \h)$ with coefficients in a certain $\lb$-module.

\begin{theorem}[Bott~\cite{B}] \label{Bott_rel_lie}  Let $E$ be a holomorphic $B$-module, and $W$ a holomorphic $G$-module. 
Let ${\mathcal E}$ be the sheaf of local holomorphic 
sections of the equivariant vector bundle $G\times_B E$ on the flag variety $X$. Then there is an isomorphism of vector spaces
\[ 
\op{Hom}_G \left( W, \; \mH^{\bullet}(X, \, {\mathcal E}) \right) = \mH^{\bullet}(\lb, \, \h, \,  \op{Hom}(W,E) ) . 
\] 
\end{theorem}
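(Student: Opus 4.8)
The plan is to deduce Bott's theorem from the combination of the Borel--Weil--Bott theorem (Theorem \ref{BWB}) with a Leray-type spectral sequence computing the cohomology of $G \times_B E$, or alternatively to give a direct Koszul-resolution argument; I will sketch the latter, which is the most self-contained. First I would reduce to the case where $W$ is irreducible, since both sides are additive in $W$ and every finite-dimensional $G$-module decomposes; moreover both sides send direct sums in $E$ to direct sums, but $E$ need not be semisimple as a $\lb$-module, so I must keep $E$ arbitrary. The key geometric input is that the $G$-action makes $\mH^\bullet(X,\mathcal E)$ into a (rational) $G$-module, so that $\mathrm{Hom}_G(W,\mH^\bullet(X,\mathcal E))$ extracts the $W$-isotypic multiplicity; the content of the theorem is that this multiplicity space is computed by relative Lie algebra cohomology.

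The main step is to build a resolution of $\mathcal E$ by acyclic sheaves whose cohomology is transparent. Concretely, I would tensor the module $\mathrm{Hom}(W,E)$, viewed as a $\lb$-module, with the Koszul-type (Chevalley--Eilenberg) complex $\Lambda^\bullet(\g/\lb)^*$ computing $\lb$-cohomology relative to $\h$; sheafifying over $X$ via the $G\times_B(-)$ construction, and using that $\g/\lb \cong \lu \cong \n^*$ as $B$-modules together with the identification $T^*X \cong G\times_B\n$, one gets a complex of vector bundles $G\times_B(\Lambda^\bullet \n \otimes \mathrm{Hom}(W,E))$ resolving an appropriate object. The individual terms decompose (ignoring the $G$-equivariant structure, but retaining enough to read off $W$-multiplicities) into line bundles $\Lx_\mu$ to which Borel--Weil--Bott applies, and one checks that taking $G$-invariant $\mathrm{Hom}(W,-)$ of the sheaf cohomology kills all the ``extra'' terms and leaves precisely the Chevalley--Eilenberg complex for $\mH^\bullet(\lb,\h,\mathrm{Hom}(W,E))$. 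Running the hypercohomology spectral sequence of this complex, the BWB vanishing forces degeneration after extracting $W$-multiplicities, yielding the claimed isomorphism.

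An alternative, cleaner route: regard $G\times_B E \to X \to \mathrm{pt}$ and use that $\mH^\bullet(X,\mathcal E) = \mH^\bullet(\lb,\h; E)$ ``relative to $G$'' is already a standard statement (Bott's original formulation), then observe that $\mathrm{Hom}_G(W,-)$ commutes with the relevant functors because $W$ is a finite-dimensional $G$-module and the $G$-action on everything in sight is algebraic; the identity $\mathrm{Hom}_G(W,\mH^\bullet(\lb,\h;E)) \cong \mH^\bullet(\lb,\h;\mathrm{Hom}(W,E))$ then follows from the fact that $\mathrm{Hom}(W,-)$ is exact and intertwines the $\lb$-module structures, together with $(\mathrm{Hom}(W, M))^{\g} = \mathrm{Hom}_\g(W,M) = \mathrm{Hom}_G(W,M)$ applied degreewise on the Chevalley--Eilenberg complex, using that the differential is $\g$-equivariant and in particular $\lb$-equivariant so that the $W$-multiplicity spaces form a subcomplex computing the right-hand side.

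The hard part will be bookkeeping the equivariant structures carefully enough: one must track that the isomorphism of Borel--Weil--Bott is $G$-equivariant (which it is, as stated in Theorem \ref{BWB}), and that the decomposition of $G\times_B(\Lambda^\bullet\n \otimes \mathrm{Hom}(W,E))$ into line bundles, while not $G$-equivariant, nonetheless respects the $W$-isotypic decomposition after applying $\mH^\bullet(X,-)$ and $\mathrm{Hom}_G(W,-)$ — in other words, that no nontrivial extensions among the relevant $G$-modules interfere with computing multiplicities. The cleanest way around this obstacle is to avoid splitting $E$ at all and instead invoke the known statement $\mH^\bullet(X, \mathcal E) \cong \mH^\bullet(\lb,\h;E)$ as $G$-modules (for which \cite{B} is the reference), reducing the proof to the purely algebraic Lemma that $\mathrm{Hom}_G(W,-)$ applied to relative Lie algebra cohomology with coefficients in $E$ gives relative Lie algebra cohomology with coefficients in $\mathrm{Hom}(W,E)$, which is a short homological-algebra check.
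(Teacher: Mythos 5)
First, a point of comparison: the paper does not prove this statement at all --- it is quoted from Bott's paper \cite{B} and used as a black box --- so your sketch has to stand entirely on its own, and as written it does not. The fallback you call the ``cleanest way around'' is circular: there is no statement in \cite{B} of the form $\mH^\bullet(X,\mathcal E)\cong \mH^\bullet(\lb,\h;E)$ ``as $G$-modules.'' The right-hand side is a bare vector space with no natural $G$-action, and the claim is numerically false already for $E=\C_\lambda$ with $\lambda$ dominant, where $\mH^\bullet(\lb,\h;\C_\lambda)$ is at most one-dimensional while $\mH^0(X,\Lx_\lambda)=L_\lambda$. What Bott actually proves is precisely the multiplicity statement you are asked to establish, so invoking ``Bott's original formulation'' as the known input begs the question; likewise one cannot apply $\op{Hom}_G(W,-)$ degreewise to a complex that has not yet been given a $G$-structure --- producing that identification is the entire content of the theorem, not a ``short homological-algebra check.''

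Your first route has a concrete gap as well. You never say what the complex $G\times_B(\Lambda^\bullet \n\otimes \op{Hom}(W,E))$ is a resolution of; its fiberwise (Chevalley--Eilenberg) cohomology is $\mH^\bullet(\n,\op{Hom}(W,E))$, which is not concentrated in degree zero, so it is not an acyclic resolution of $\mathcal E$ in any evident sense. Moreover, filtering by line bundles and applying Theorem \ref{BWB} only controls the cohomology of the associated graded pieces; recovering the $W$-multiplicity space of $\mH^\bullet(X,\mathcal E)$ requires identifying the connecting maps and spectral-sequence differentials, which is exactly the bookkeeping you flag as ``the hard part'' and do not carry out --- and BWB is itself the rank-one case of the theorem, so the logical order is delicate. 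A self-contained argument goes differently: since $\mH^i(X,\mathcal E)\cong R^i\op{ind}_B^G E$ and $W$ is a $G$-module, derived Frobenius reciprocity gives $\op{Hom}_G(W,\mH^i(X,\mathcal E))\cong \op{Ext}^i_B(W|_B,E)$, and then, because $H$ is linearly reductive and rational cohomology of the unipotent group $N$ in characteristic zero agrees with Lie algebra cohomology of $\n$, one has $\op{Ext}^i_B(W,E)\cong \mH^i(\n,\op{Hom}(W,E))^{\h}\cong \mH^i(\lb,\h,\op{Hom}(W,E))$; alternatively, Bott's own proof identifies the $W$-isotypic part of the Dolbeault complex of $\mathcal E$ with the relative Chevalley--Eilenberg complex of $(\lb,\h)$ with coefficients in $\op{Hom}(W,E)$ via Peter--Weyl. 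Either of these supplies the geometric-to-algebraic bridge that your sketch assumes.
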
 

Here the left-hand side contains the sheaf cohomology of ${\mathcal E}$ over $X$, and the right-hand side represents the relative cohomology of the Lie algebras $(\lb, \, \h)$ with coefficients in the $\lb$-module $\op{Hom}(W,E)$. Suppose we know the structure of the $\lb$-module $E$
corresponding to the vector bundle over $X$ whose cohomology we need to compute. Using filtrations by line bundles and applying Theorem~\ref{BWB}, we can 
obtain a list of $G$-modules $W$ that can appear as direct summands of the $G$-module $\mH^{\bullet}(X, {\mathcal E})$. 
Then Theorem~\ref{Bott_rel_lie} allows us to compute the multiplicity of each $W$ in $\mH^{\bullet}(X, {\mathcal E})$ as the dimension of the relative cohomology of $(\lb, \, \h)$ with coefficients in $\op{Hom}(W,E)$. The last problem is easy to solve explicitly if the considered modules are relatively small, which will be the case for the sheaves that appear
 in Theorem \ref{Hoh} for $G = SL_3(\C)$.


Furthermore, we have 

\begin{lemma}(\cite[Corollary 5.2]{B}). \label{H0_H1_rel_lie}
Let $F$ be a finite-dimensional module over the Borel subalgebra $\lb$ of a complex semisimple Lie algebra $\g$, 
$\n = [\lb, \lb]$ the nilradical, and $\h$ be the Cartan subalgebra of $\lb$. Suppose $F$ is $\h$-diagonalizable. Then 
 the relative Lie cohomology of the pair $(\lb, \h)$ with values in $F$ is the $\h$-invariant submodule of the cohomology of the Lie algebra $\n$ with values in $F$: 
\[   \mH^\bullet(\lb, \h, F) \cong \mH^\bullet(\n, F)^\h .\]
In particular,    
 $ \mH^0(\lb, \h, F) \cong F^\lb$,  the $\lb$-invariant submodule of $F$. \hfill $\square$
\end{lemma}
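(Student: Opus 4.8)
The plan is to realise both sides of the claimed isomorphism as the cohomology of one and the same finite-dimensional cochain complex, namely the $\h$-invariant part of the Chevalley--Eilenberg complex of $\n$ with coefficients in $F$.

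First I would unwind the definition of relative Lie algebra cohomology: $\mH^\bullet(\lb,\h,F)$ is the cohomology of the subcomplex $C^\bullet(\lb,\h;F)\subseteq C^\bullet(\lb;F)=\mathrm{Hom}(\Lambda^\bullet\lb,F)$ of \emph{basic} cochains, i.e.\ those $\omega$ with $\iota_H\omega=0$ and $L_H\omega=0$ for every $H\in\h$ (that these form a subcomplex is the standard consequence of Cartan's identities $L_H=d\iota_H+\iota_H d$ and $[L_H,d]=0$). Using the $\h$-module splitting $\lb=\h\oplus\n$ together with the fact that $\n=[\lb,\lb]$ is an ideal, I would identify $\lb/\h\cong\n$ as $\h$-modules; the horizontality condition $\iota_H\omega=0$ then says exactly that $\omega$ is pulled back from a cochain on $\n$, so that
\[
C^\bullet(\lb,\h;F)\;=\;\mathrm{Hom}_\h\bigl(\Lambda^\bullet\n,\,F\bigr)\;=\;\bigl(\Lambda^\bullet\n^*\otimes F\bigr)^\h .
\]

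Next I would verify that, under this identification, the differential inherited from $C^\bullet(\lb;F)$ is nothing but the Chevalley--Eilenberg differential $d_\n$ of $\n$ acting on $F$ (restricted along $\n\hookrightarrow\lb$): evaluating $d\omega$ on arguments that all lie in $\n$, only the action of elements of $\n$ on $F$ and the brackets $[X_i,X_j]\in[\n,\n]\subseteq\n$ can occur, which is precisely the recipe for $d_\n$. Hence $C^\bullet(\lb,\h;F)=\bigl(\Lambda^\bullet\n^*\otimes F,\,d_\n\bigr)^\h$ as complexes, where $\h$ acts diagonally through $\Ad$ on $\n^*$ (with weights the negatives of the positive roots) and through the given action on $F$.

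The last step, and the only one where the hypothesis is really used, is to interchange $\h$-invariants with cohomology. Since $F$ is $\h$-diagonalizable and $\n^*$ manifestly is, every term of the complex is $\h$-diagonalizable, so taking $\h$-invariants is merely projection onto the zero weight space --- an exact functor that commutes with the ($\h$-equivariant) differential and hence with passing to cohomology. This gives
\[
\mH^\bullet(\lb,\h,F)\;\cong\;\mH^\bullet(\n,F)^\h .
\]
For the remaining assertion, the degree-$0$ term of the Chevalley--Eilenberg complex of $\n$ is just $F$, with $d_\n v=(X\mapsto X\cdot v)$, so $\mH^0(\n,F)=F^\n$; taking $\h$-invariants produces $F^\n\cap F^\h$, which coincides with $F^\lb$ precisely because $\lb=\h\oplus\n$. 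I do not anticipate a genuine obstacle; the only slightly delicate bookkeeping is the matching of the two descriptions of the relative differential in the middle step, everything else being formal once $\h$-semisimplicity of $F$ is invoked.
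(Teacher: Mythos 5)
Your argument is correct. Note that the paper does not actually prove this lemma: it is quoted as \cite[Corollary 5.2]{B}, so there is no internal proof to compare against. Your derivation — identifying the relative complex $\mathrm{Hom}_\h(\Lambda^\bullet(\lb/\h),F)$ with the $\h$-invariant part $\bigl(\Lambda^\bullet\n^*\otimes F\bigr)^\h$ of the Chevalley--Eilenberg complex of $\n$ via the splitting $\lb=\h\oplus\n$, checking that the inherited differential is $d_\n$, and then using $\h$-diagonalizability of $F$ (hence of every cochain space) to commute the exact functor of taking the zero-weight part with cohomology — is the standard route and is essentially how Bott's cited corollary is obtained; the degree-zero identification $\mH^0(\n,F)^\h=F^\n\cap F^\h=F^\lb$ is likewise complete. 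The only hypotheses you use are exactly those in the statement, so no gap remains.
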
 




\paragraph{Lie algebra cohomology and the BGG resolution.} We can make Bott's method (Theorem \ref{Bott_rel_lie} and Lemma \ref{H0_H1_rel_lie}) more combinatorially approachable by using the Bernstein-Gelfand-Gelfand (BGG) 
resolution \cite{BGG,BGG2}.  Let $\lambda\in \X^+$ be a dominant integral weight for $\g$, and denote by $L_\lambda$ the corresponding simple $\g$-module with highest weight $\lambda$. 
By Theorem \ref{Bott_rel_lie} and Lemma \ref{H0_H1_rel_lie}, we need to compute the cohomology 
\[ 
 \op{H}^\bullet (\lb, \h, \op{Hom}(L_\lambda, E))  \cong  \op{H}^\bullet(\n,  \op{Hom}(L_\lambda, E))^{\h}  ,
\]
where $E$ is some finite-dimensional 
$\lb$-module. By definition we have 
\begin{equation}\label{eqn-n-cohomology}
\begin{array}{rcl}
 \op{H}^\bullet(\n,  \op{Hom}(L_\lambda, E))^\h &\cong & \op{Ext}_{\n}^\bullet(\C, \op{Hom}(L_\lambda, E))^\h 
\cong \op{Ext}_{\n}^\bullet(\C, E \otimes L_\lambda^*)^\h \\
& \cong &  \op{Ext}_{\n}^\bullet(L_\lambda \otimes E^* , \C)^\h .
\end{array}
\end{equation}

To compute the last term we need, by definition, an $\h$-graded projective resolution for the $U(\n)$-module $L_\lambda\otimes E^*$. Such a minimal resolution is provided by the BGG resolution for $L_\lambda$, which we briefly recall below.  

For the simple $\g$-module $L_\lambda$, the BGG theorem \cite{BGG, BGG2} provides a resolution by direct sums of Verma modules (see also \cite[Section 6.1]{Hum} for a concise exposition):
\begin{equation}\label{eqn-BGG-rsln}
 0 \lra M_{w_0 \cdot \lambda} \to \cdots \to \bigoplus_{l(w)=j} M_{w \cdot \lambda} \stackrel{d_j}{\lra} \bigoplus_{l(w)=j-1} M_{w \cdot \lambda} \to 
\cdots \to M_\lambda \stackrel{\varepsilon}{\longrightarrow} L_\lambda \lra 0 .
\end{equation}
Here $M_\mu$ denotes the Verma $\g$-module of highest weight $\mu\in \X$, $l:W\lra \N$ is the length function on $W$ and $w_0$, as usual, stands for the longest element in the group $W$. Recall that if we write $\lb^+=\h+\lu$, then $M_\mu=U(\g)\otimes_{U(\lb^+)}\C_{\mu}$, where $\lb^+$ acts on the one-dimensional space $\C_{\mu}$ via the $\h$-character $\mu$. By restriction to $\n$, we obtain an $\h$-graded free $U(\n)$-module resolution for $L_\lambda$. If $E^*$ is another $\h$-graded $U(\n)$-module, then the tensor product of the complex \eqref{eqn-BGG-rsln} with $E^*$ provides a desired resolution for $L_\lambda\otimes E^*$ because $U(\n)\otimes E^*$ is a free $U(\n)$-module\footnote{This is more generally true for any Hopf algebra $H$: the tensor product of a projective $H$-module with an arbitrary module remains projective.}. Denote the complex \eqref{eqn-BGG-rsln} without the $L_\lambda$ term by $M_\bullet$. Then the groups $\mathrm{Ext}_\n(L_\lambda\otimes E^*,\C)$ appearing in equation \eqref{eqn-n-cohomology} are identified with the cohomology groups of following complex the length $l(w_0)$: 
\begin{equation}\label{F_BGG}
\mathrm{Hom}_{\n}(M_\bullet\otimes E^*, \C)^{\h}\cong
\left(
\cdots \lra \bigoplus_{l(w) =j-1} E[w\cdot \lambda] \stackrel{d_j^*}{\longrightarrow} \bigoplus_{l(w) =j} E[w\cdot \lambda]\lra \cdots 
\right),
\end{equation}
 where $E[\mu]$ denotes the $\mu$-weight subspace of $E$. Here we have used the fact that
 \[
\mathrm{Hom}_{U(\n)}(M_\mu, E)^{\h}\cong (E_{-\mu})^{\h}\cong E[\mu],
 \]
 with $E_{-\mu}$ standing for $E$ with the $\h$-action shifted by the character $-\mu$. The maps $d_j^*$ are induced from the maps $d_j$ in the BGG resolution \eqref{eqn-BGG-rsln} by adjunction. We will give a more detailed description of these maps below.  Combining Theorem \ref{Bott_rel_lie} and this discussion, we obtain the following statement. 
 
\begin{proposition}  \label{rel_lie_BGG}
Let $E$ be a finite-dimensional $\lb$-module, and $L_\lambda$ be the finite-dimensional simple $\g$-module of dominant highest weight $\lambda$. Then the $\N$-graded multiplicity space
\[ 
\op{Hom}_G \left( L_\lambda, \; \mH^{\bullet}(X, G\times_B E) \right)  
\] 
is given by the cohomology of the complex 
\begin{equation*} 
0\lra E[\lambda]\to \cdots \to \bigoplus_{l(w) =j-1} E[w\cdot \lambda] \stackrel{d_j^*}{\longrightarrow} \bigoplus_{l(w) =j} E[w\cdot \lambda]\to \cdots \to E[w_0\cdot \lambda]\lra 0, 
\end{equation*}
where $w_0$ stands for the longest element of the Weyl group for $G$ and $j\in\{1,2,\dots, l(w_0)\}$. \hfill$\square$
\end{proposition}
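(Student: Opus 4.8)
The plan is to assemble Proposition \ref{rel_lie_BGG} from the three ingredients already laid out in the excerpt: Bott's theorem (Theorem \ref{Bott_rel_lie}), its reduction to $\n$-cohomology (Lemma \ref{H0_H1_rel_lie}), and the BGG resolution \eqref{eqn-BGG-rsln}. First I would invoke Theorem \ref{Bott_rel_lie} with $W = L_\lambda$ to rewrite the multiplicity space $\op{Hom}_G(L_\lambda, \mH^\bullet(X, G\times_B E))$ as the relative Lie algebra cohomology $\mH^\bullet(\lb, \h, \op{Hom}(L_\lambda, E))$. Since $E$ is finite-dimensional and (being a $\lb$-module built from weight spaces) $\h$-diagonalizable, and $\op{Hom}(L_\lambda, E)$ inherits this property, Lemma \ref{H0_H1_rel_lie} identifies this with $\mH^\bullet(\n, \op{Hom}(L_\lambda, E))^\h$, which by the chain of natural isomorphisms \eqref{eqn-n-cohomology} equals $\op{Ext}_\n^\bullet(L_\lambda \otimes E^*, \C)^\h$.

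Next I would compute this Ext group via a projective resolution. The key input is that restricting the BGG resolution \eqref{eqn-BGG-rsln} of $L_\lambda$ to $\n$ yields an $\h$-graded resolution of $L_\lambda$ by free $U(\n)$-modules, since each Verma module $M_\mu = U(\g)\otimes_{U(\lb^+)}\C_\mu$ is free over $U(\n)$ (by the PBW theorem $U(\g)$ is free over $U(\lb^+)$, hence $M_\mu \cong U(\n)$ as $\n$-modules, with the $\h$-grading shifted by $\mu$). Tensoring the truncated complex $M_\bullet$ with the finite-dimensional $\h$-graded module $E^*$ over $\C$ keeps each term a free $U(\n)$-module — this is the footnoted fact that a projective module tensored with any module stays projective over a Hopf algebra — so $M_\bullet \otimes E^*$ is a projective resolution of $L_\lambda \otimes E^*$. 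Applying $\op{Hom}_\n(-,\C)^\h$ and using the adjunction identity $\op{Hom}_{U(\n)}(M_\mu, E)^\h \cong (E_{-\mu})^\h \cong E[\mu]$ turns the $w$-summand in homological degree $l(w)=j$ into the weight space $E[w\cdot\lambda]$, producing exactly the complex \eqref{F_BGG}. Reading off degree $0$ (the term $E[\lambda]$, from $w=e$) and top degree $l(w_0)$ (the term $E[w_0\cdot\lambda]$) gives the displayed complex in the statement, and its cohomology computes the desired multiplicity space. One should also note the $\N$-grading: it is the natural grading on $E$ by $\h$-weight (or, in the geometric normalization, the $\C^*$-weight coming from the dilation action), which is respected by all the maps, so the identification is graded.

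The one point that needs genuine care — and which I expect to be the main obstacle to writing cleanly rather than a deep difficulty — is verifying that the differentials $d_j^*$ in \eqref{F_BGG} are precisely the $\h$-invariant restrictions of the $\op{Hom}$-duals of the BGG differentials $d_j$, and that under the adjunction isomorphisms they act as honest linear maps $\bigoplus_{l(w)=j-1} E[w\cdot\lambda] \to \bigoplus_{l(w)=j} E[w\cdot\lambda]$ built from the standard BGG maps $M_{w'\cdot\lambda}\to M_{w\cdot\lambda}$ (nonzero only when $w' \to w$ in the Bruhat order with $l(w) = l(w')+1$). Concretely one must check that $\op{Hom}_{U(\n)}(-,E^*)$ — or equivalently, after dualizing, $\op{Hom}_{U(\n)}(M_\bullet, E)^\h$ — sends the BGG inclusion $M_{w'\cdot\lambda}\hookrightarrow M_{w\cdot\lambda}$ to a map $E[w\cdot\lambda]\to E[w'\cdot\lambda]$ given by the action of a specific element of $U(\n)$ (the ``straightening'' element realizing that Verma inclusion), so that upon taking the linear dual the arrow reverses and lands as stated. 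I would record the explicit description of these maps right after the statement as promised, but for the purposes of the Proposition itself it suffices that (i) $M_\bullet\otimes E^*$ is a genuine projective resolution, (ii) the functor $\op{Hom}_\n(-,\C)^\h$ is computed termwise by the adjunction formula, and (iii) cohomology is independent of the choice of projective resolution. Steps (i)–(iii) are all standard homological algebra, so the proof is essentially a matter of carefully composing the already-cited results; there is no serious new mathematical content beyond bookkeeping of gradings and identifications.
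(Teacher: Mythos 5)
Your proposal is correct and follows essentially the same route as the paper: Bott's Theorem \ref{Bott_rel_lie} together with Lemma \ref{H0_H1_rel_lie} reduces the multiplicity space to $\op{Ext}_\n^\bullet(L_\lambda\otimes E^*,\C)^\h$ via \eqref{eqn-n-cohomology}, which is then computed from the BGG resolution restricted to $\n$, tensored with $E^*$, and dualized using $\op{Hom}_{U(\n)}(M_\mu,E)^\h\cong E[\mu]$. The only difference is cosmetic: the paper defers the explicit identification of the differentials $d_j^*$ (via the scalars $e(w,w')$ and the elements $u_\beta$) to the discussion following the Proposition, exactly as you anticipate.
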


Our next goal is to compute explicitly the boundary maps $d_j^*$. 

We fix a $\lambda\in \X^+$, and for $w \in W$,  take an embedding $M_{w \cdot \lambda} \to M_\lambda$, which also determines the embeddings 
$M_{w' \cdot \lambda} \to M_{w \cdot \lambda}$ whenever $w < w'$ in the Bruhat order on $W$. Since the maps between Verma modules are unique up to scalars (see, for instance, \cite[Theorem 4.2]{Hum}), this amounts to specifying a maximal $\h$-weight vector $u v_{w\cdot\lambda} \in M_{w\cdot \lambda}\cong U(\n) v_{w\cdot\lambda}$ of weight $w^\prime \cdot \lambda$, where $u \in U(\n)$ has weight $(w^\prime\cdot \lambda-w\cdot \lambda)$ and $v_{w\cdot \lambda}$ is a free $U(\n)$-module generator of weight $w\cdot\lambda$. Then the map from $M_{w' \cdot \lambda}$ to $ M_{w \cdot \lambda}$ is determined by a scalar coefficient $e(w,w')$ which is independent of $\lambda$. Write $w \to w'$ whenever $w' =tw$ for a $t \in W$ and $l(w') = l(w)+1$.  Then the maps between the Verma modules  appearing in the BGG resolution are given by the collections of scalars $e(w,w')$ such that $w \to w'$.  
The theorem in \cite{BGG2} states that for all pairs $w \to w'$ the scalars $e(w,w')$ can be chosen so that $e(w,w') \in \{ \pm 1 \}$ and the BGG resolution is an exact complex. For example, a possible choice for $\g = \lie{sl}_3$ is shown in the diagram 
\ref{sl3_BGG}. Each arrow $w \to w'$ is labeled with the value of the scalar $e(w,w')$. 

\begin{equation} \label{sl3_BGG}
\begin{gathered}
\xymatrix{
&& s_1\ar[rr]^{+1}\ar[ddrr]^{\hspace{0.1in}-1}&& s_2s_1 \ar[drr]^{+1} && \\
1 \ar[urr]^{+1}\ar[drr]_{+1} && && && w_0\\
&& s_2\ar[rr]_{+1}\ar[uurr]_{\hspace{0.1in}-1} && s_1s_2\ar[urr]_{+1} &
}
\end{gathered} 
\end{equation}


Now suppose that $w, w ' \in W$ are such that $w \to w'$, and $w' = s_\beta w$ for a reflection $s_\beta$ corresponding to a positive root $\beta$. Then the Verma module $M_{w \cdot \lambda} \cong U(\n) v_{w \cdot \lambda}$ with a fixed maximal weight vector $v_{w \cdot \lambda}$ contains a maximal vector $u_\beta v_{w \cdot \lambda}$ of weight $s_\beta w \cdot \lambda$ for a certain element $u_\beta \in U(\n)$, 
and the map $M(s_\beta w \cdot \lambda) \to M(w \cdot \lambda)$ in the BGG resolution 
can be realized as the right multiplication by 
$ e(s_\beta w,  w) u_\beta$ on $U(\n)$.
 
 In particular, if $s_i \in W$ is a simple reflection corresponding to the simple root $\alpha_i$, then the map $M_{s_i \cdot \lambda} \lra M_{\lambda}$ in the BGG resolution  can be realized as 
 $$
 (-)\cdot e(s_i,  1) f_i^{\langle  \lambda, \check{\alpha}_i \rangle+1} : U(\n) \lra U(\n), \quad x\mapsto e(s_i,  1)x\cdot f_i^{\langle  \lambda, \check{\alpha}_i \rangle+1}.
 $$
  
In general, the elements $u_\beta \in U(\n)$ are more complicated and should be determined in each particular 
case to make sure that BGG resolution is an exact complex. In all cases considered in this paper we will be able to compute  $u_\beta$ explicitly. For example, below is the complete diagram for $\g = \lie{sl}_3$ and $\lambda=0$. In this case, we identify all the Verma modules appearing in the complex \eqref{eqn-BGG-rsln} as certain left ideals in $U(\n)\cdot 1\cong M_0$. 
\begin{equation} \label{eqn-BGG-sl3-freemod}
\begin{gathered}
\xymatrix{
&& U(\n)\ar[rr]^{f_2^2}\ar[ddrr]^{\hspace{0.1in} {\small  -2f_1f_2+f_2f_1}}&& U(\n) \ar[drr]^{f_1} && \\
U(\n) \ar[urr]^{f_1}\ar[drr]_{f_2} && && && U(\n)\\
&& U(\n)\ar[rr]_{f_1^2}\ar[uurr]_{\hspace{0.1in}{\small -2f_2f_1+f_1f_2}} && U(\n)\ar[urr]_{f_2} &
}
\end{gathered}
\end{equation}
The elements on the arrows in the diagram indicate right multiplications by the elements on the free module $U(\n)$. (A similar computation for $\g = \mathfrak{sl}_4$ is given in Section \ref{sec-further-evidence}). 
It is worth pointing out that \eqref{eqn-BGG-sl3-freemod} is indeed a complex due to the $\mathfrak{sl}_3$-Serre relations
\[
f_1^2f_2+f_2f_1^2=2f_1f_2f_1, \quad \quad 
f_2^2f_1+f_1f_2^2=2f_2f_1f_2.
\]
Now, applying $\mathrm{Hom}_{\n}(M_\bullet\otimes E^*, \C)^\h$, we obtain the following total complex
\begin{equation} \label{eqn-BGG-sl3}
\begin{gathered}
\xymatrix{
&& E[s_1\cdot 0]\ar[rr]^-{f_2^2}\ar[ddrr]^-{\hspace{0.25in} {\small -2f_2f_1+f_1f_2}}&& E[s_2s_1\cdot 0] \ar[drr]^-{f_1} && \\
E[0] \ar[urr]^{f_1}\ar[drr]_{f_2} && && && E[w_0\cdot 0]\\
&& E[s_2\cdot 0]\ar[rr]_-{f_1^2}\ar[uurr]_-{\hspace{0.25in} {\small -2f_1f_2+f_2f_1}} && E[s_1s_2\cdot 0]\ar[urr]_-{f_2} &
} 
\end{gathered}\ .
\end{equation}

More generally, it follows from this discussion that the differential maps in \eqref{F_BGG} are given by the left action of certain elements $U(\n)$ multiplied by $\pm 1$.  
Suppose that $w \in W$ is such that   $l(w) =j-1$. Then  
\begin{equation}\label{eqn-boundary-map}
\left. d_j^*\right|_{E[w\cdot\lambda]} = \bigoplus_{\{s_\beta : w \to s_\beta w\}} e(s_\beta w, w) 
u_\beta : E[w \cdot \lambda ] \lra \bigoplus_{\{s_\beta : w \to s_\beta w\}} E[s_\beta w \cdot \lambda]  .  
\end{equation}

The complex \eqref{F_BGG}, together with the explicit differential maps \eqref{eqn-boundary-map}, will be used to compute the multiplicity space for the $G$-isotypical components of $\mH^\bullet(X,G\times_BE)$ once we know the structure of the $\lb$-modules $E$. Our next goal is to determine the structure of $E$ that will appear in the computation of the center of the principal block of $u_q(\g)$.

\subsection{The equivariant structure of the tangent bundle}\label{sec-equ-structure-tan}
 This and the next subsecions are devoted to understanding the $\lb$-module structure of the module corresponding to the pushdown of the tangent bundle and its exterior powers from $\Nt$ to $X$.

Since $\op{pr}:\Nt\lra X$ is a $G\times \C^*$-equivariant projection, the pushforward of the tangent bundle $\op{pr}_*(T\Nt)$ onto $X$ is a $G\times \C^*$-equivariant vector bundle.  Our goal is to represent $\op{pr}_*(T\Nt)$ as a bundle of the form $G\times_B V$ for some infinite-dimensional $B$-representation.

We start by recalling the following result.

\begin{lemma}\label{lemma-affineness-pr}
The morphism $\op{pr}:\Nt\lra X$ is affine. In particular, it induces an equivalence between the abelian categories of coherent sheaves on $\Nt$ and quasi-coherent sheaves on $X$ that are finitely generated over the sheaf of algebras $\op{pr}_*(\mathcal{O}_{\Nt})$.
\end{lemma}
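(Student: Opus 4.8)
The plan is to recognize this lemma as a packaging of two standard facts from algebraic geometry: the total space of a vector bundle is affine over its base, and affine morphisms between Noetherian schemes are classified, via pushforward, by quasi-coherent sheaves of algebras on the base. First I would use the identification \eqref{eqn-cotan-bundle-as-rsltn}, which exhibits $\op{pr}:\Nt\lra X$ as the projection of the cotangent bundle $T^*X$. Consequently $\Nt$ is the relative spectrum
\[
\Nt\;\cong\;\underline{\mathrm{Spec}}_X\big(\mathrm{Sym}_{\mathcal{O}_X}(TX)\big), \qquad \op{pr}_*(\mathcal{O}_{\Nt})\;\cong\;\mathrm{Sym}_{\mathcal{O}_X}(TX),
\]
a quasi-coherent sheaf of $\mathcal{O}_X$-algebras. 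Affineness of a morphism may be tested on an affine open cover of the target, so choosing a finite cover $\{U_i\}$ of $X$ by affine opens over which $TX$ is trivial of rank $n=\dim X$, one has $\op{pr}^{-1}(U_i)\cong U_i\times \mathbb{A}^n$, which is affine; hence $\op{pr}$ is an affine morphism.

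For the ``in particular'' assertion I would invoke the equivalence attached to any affine morphism $f:Y\lra X$ of Noetherian schemes: the functor $f_*$ induces an equivalence between the category of quasi-coherent $\mathcal{O}_Y$-modules and the category of quasi-coherent $\mathcal{O}_X$-modules carrying an $f_*\mathcal{O}_Y$-module structure. The verification is local: over an affine open $U=\mathrm{Spec}(R)\subseteq X$ one has $f^{-1}(U)=\mathrm{Spec}(A)$ with $A=\Gamma(U,f_*\mathcal{O}_Y)$, and there the statement is the tautological equivalence between $A$-modules and quasi-coherent sheaves on $\mathrm{Spec}(A)$; these local equivalences are compatible on overlaps and glue. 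Applied to $f=\op{pr}$, this identifies the category of quasi-coherent sheaves on $\Nt$ with the category of quasi-coherent $\op{pr}_*(\mathcal{O}_{\Nt})$-modules on $X$.

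Finally I would restrict to coherent sheaves. Since $X$, and hence $\Nt$, is Noetherian, a quasi-coherent sheaf on $\Nt$ is coherent exactly when it is locally of finite type; in the local picture above, a quasi-coherent $\mathcal{O}_{\Nt}$-module corresponds over $f^{-1}(U)=\mathrm{Spec}(A)$ to an $A$-module $M$, and the sheaf is coherent iff $M$ is a finitely generated $A$-module. Therefore the equivalence of the preceding paragraph cuts down to an equivalence between $\coh(\Nt)$ and the full subcategory of quasi-coherent $\op{pr}_*(\mathcal{O}_{\Nt})$-modules on $X$ that are finitely generated over $\op{pr}_*(\mathcal{O}_{\Nt})$. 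I do not expect a serious obstacle here; the only point requiring care is that finite generation is meant over the sheaf of algebras $\op{pr}_*(\mathcal{O}_{\Nt})\cong\mathrm{Sym}_{\mathcal{O}_X}(TX)$, and not over $\mathcal{O}_X$, over which this sheaf is far from coherent.
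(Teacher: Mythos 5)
Your proposal is correct and follows essentially the same route as the paper: affineness from the vector-bundle structure of $\op{pr}$ (the paper just notes the fibers are copies of $\n$, while you make this precise via local trivializations), and the ``in particular'' clause from the standard equivalence attached to an affine morphism, which is exactly the content of the paper's citation to \cite[Exercise II.5.17]{Ha}. Your write-up is simply a fleshed-out version of the paper's two-line proof, with the coherence/finite-generation bookkeeping made explicit.
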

\begin{proof}
The first claim is clear, since the fibers of $\op{pr}$ are all affine spaces that are isomorphic to $\n$. The second claim is a general property of affine maps. See\cite[Exercise II.5.17]{Ha}.
\end{proof}

As $\lu$ is the space of $\mathbb{C}$-linear functions on $\n$,  we identify the infinite-rank, locally free sheaf of algebras $\op{pr}_*(\mathcal{O}_{\Nt})$ as the associated sheaf of an infinite-rank $G\times \C^*$-equivariant bundle:
\begin{equation}\label{eqn-pushforward-O}
\op{pr}_*(\mathcal{O}_{\Nt})\cong G\times_B S^\bullet(\lu).
\end{equation} 
Here $S^\bullet:=\oplus_{k\in \N}S^k$ stands for the direct sum of all symmetric powers of a vector space. With respect to the $\C^*$-action, the degree of a homogeneous local section in $S^k(\lu)$ is equal to $2k$.

The tangent bundle of $X$ (see \eqref{eqn-tan-cotan-as-equiv-bdl}) fits naturally into the following short exact sequence of vector bundles on $X$:
\begin{equation}\label{eqn-ses-one-on-X}
G\times_B\left(
\xymatrix{
0 \ar[r] & \lb \ar[r] &  \g \ar[r] & \lu\ar[r] & 0
}
\right).
\end{equation}
The middle term is isomorphic, although not $G\times \C^*$-equivariantly, to the trivial vector bundle $X\times \g$ (Example~\ref{eg-equiv-bundle}).
Pulling the sequence \eqref{eqn-ses-one-on-X} back to $\Nt$ along $\pr$, we obtain a short exact sequence
\begin{equation}\label{eqn-ses-one-on-N}
\xymatrix{
0 \ar[r] & \pr^*(G\times_B \lb) \ar[r] &   \g \times \Nt\ar[r] &\pr^*(G\times_B \lu)\ar[r] & 0.
}
\end{equation}

The tangent bundle $T\Nt$ has a natural subbundle that consists of tangent vectors along the fiber. Since the fibers are linear spaces, this subbundle is isomorphic to the pullback of $\Nt$ itself along $\pr$. The quotient bundle of $T\Nt$ modulo this subbundle are naturally identified with the pullback of the tangent bundle of $X$. Therefore, we have another short exact sequence of vector bundles
\begin{equation}\label{eqn-ses-two-on-N}
\xymatrix{0\ar[r] & \pr^*(G\times_B \n)\ar[r] & T\Nt \ar[r] & \pr^*(G\times_B \lu) \ar[r] & 0.}
\end{equation}

The $G$-actions $G\times \Nt\lra \Nt$ and $G\times X\lra X$ give rise to infinitesimal actions that are the vertical arrows of the commutative diagram
\begin{equation}\label{eqn-inf-g-action-diag1}
\begin{gathered}
\xymatrix{
\g \times \Nt \ar[d] \ar[rr]^-{\mathrm{Id_\g}\times \pr} && \g \times X \ar[d]\\
 T\Nt \ar[rr]^-{d(\pr)} && TX
}
\end{gathered} 
\ .
\end{equation}
Here $d(\pr)$ stands for the total derivative of the projection map $\pr:\Nt\lra X$. Pulling back the rightmost vertical arrow to $\Nt$, we obtain a commutative diagram
\begin{equation}\label{eqn-inf-g-action-diag2}
\begin{gathered}
\xymatrix{
\g \times \Nt \ar[d]_{\phi} \ar@{=}[r] & \pr^*(\g \times X)  \ar[d]\\
T\Nt \ar[r] & \pr^*(TX)
}
\end{gathered} 
\ .
\end{equation}
The infinitesimal action map from $\g\times \Nt$ to $T\Nt$ has been denoted by $\phi$. Applying $\phi$ to the middle terms of \eqref{eqn-ses-one-on-N} and \eqref{eqn-ses-two-on-N}, we obtain a map of short exact sequences of vector bundles on $\Nt$:

\begin{equation}\label{eqn-map-ses-on-N}
\begin{gathered}
\xymatrix{
0 \ar[r] & \pr^*(G\times_B \lb)\ar[d] \ar[r] &   \g \times \Nt\ar[r] \ar[d]^{\phi} &\pr^*(G\times_B \lu)\ar[r] \ar@{=}[d]& 0\\
0 \ar[r] & \pr^*(G\times_B \n)\ar[r] & T\Nt \ar[r] & \pr^*(G\times_B \lu) \ar[r] & 0
}
\end{gathered}\ .
\end{equation}

The left most vertical map is induced from $\phi$. Let us look at it more closely. Since $B$ fixes the identity coset $eB\in X$ and $\pr$ is $G$-equivariant, $B$ acts naturally on the fiber over the coset, which is identified with $\n$, via the adjoint representation $\mathrm{Ad}_\n$. It follows that, upon differentiation, we obtain a linear map
\begin{equation}\label{eqn-ad}
\mathrm{ad}_\n: \lb\lra \mathrm{End}_\C(\n).
\end{equation}
In other words, the vector fields coming from the infinitesimal action of $\lb$ on $\Nt$ are ``vertical'' along the fiber over $eB$. Since the $\lb$-action is linear, we may use the $G$-invariant bilinear form fixed earlier to identify the map \eqref{eqn-ad} as
\begin{equation}\label{eqn-ad-b}
\mathrm{ad}_\n: \lb\lra  \lu\otimes \n,
\end{equation}
where, in the term $\lu\otimes \n$, $\lu$ is regarded as the space of linear functions on $\n$, and $\n$ is considered  to be the tangent space of the fiber. The map $\mathrm{ad}$ is $\C^*$-invariant and has degree zero because the action of  $\g$, and thus $\lb \subset \g$, commutes with the $\C^*$-action. This is reflected in the grading: the vector fields arising from $\lb$ (or $\g$) have degree zero, and the linear functions in $\lu$ have degree two, while the tangent vectors in $\n$ have degree minus two.

Now we can push forward the diagram \eqref{eqn-map-ses-on-N} onto $X$. By Lemma \ref{lemma-affineness-pr}, the short-exactness of the horizontal rows will be preserved. Via the projection formula (see, for instance, \cite[Exercise III.8.3]{Ha}) we obtain a map of short exact sequences of free $\pr_*(\mathcal{O}_{\Nt})$-modules on $X$:  
\begin{equation}\label{eqn-map-ses-on-X}
\begin{gathered}
\xymatrix{
0 \ar[r] & G\times_B (S^\bullet(\lu)\otimes\lb)\ar[d]^{\mathrm{ad}} \ar[r]^{\iota} &    G\times_B (S^\bullet(\lu)\otimes\g) \ar[r] \ar[d]^{\phi} & G\times_B (S^\bullet(\lu)\otimes\lu)\ar[r] \ar@{=}[d]& 0\\
0 \ar[r] & G\times_B (S^\bullet(\lu)\otimes\n)\ar[r] & \pr_*(T\Nt) \ar[r] & G\times_B (S^\bullet(\lu)\otimes\lu) \ar[r] & 0
}
\end{gathered}\ .
\end{equation}
In the diagram, $\iota$ is the natural inclusion map; the map $\ad$, induced from pushing forward $\ad_\n$, is given by the composition
\begin{equation}\label{eqn-ad-map}
\ad: S^\bullet(\lu) \otimes \lb \xrightarrow{\mathrm{Id}\otimes \ad_\n} S^{\bullet}(\lu) \otimes \lu \otimes \n \stackrel{\alpha}{\lra} S^{\bullet+1}(\lu)\otimes \n.
\end{equation}
Here $\alpha$ is the multiplication of polynomials by linear functions, or, in other words, it is just the symmetrization map.

Denote the direct sum of the maps $\iota$ and $\ad$ by $\Delta:$
\begin{equation}\label{eqn-diagonal-map}
\begin{array}{ccc}
\Delta: S^\bullet(\lu) \otimes \lb & \lra & S^\bullet(\lu) \otimes \g\oplus S^\bullet(\lu) \otimes \n, \\
 x & \mapsto & (\iota(x) , ~\ad(x)).
\end{array}
\end{equation}

The commutative square on the left of \eqref{eqn-map-ses-on-X} must be a pushout diagram, since the rightmost vertical map is an equality. It follows that as an equivariant bundle, $\pr_*(T\Nt)$ is of the form $G\times_B{V_1}$, where $V_1$ is the $B$-module
\begin{equation}\label{eqn-V1}
V_1:= \dfrac{S^\bullet(\lu) \otimes \g \oplus S^\bullet(\lu)\otimes \n }{\Delta(S^\bullet(\lu)\otimes \lb)} \ .
\end{equation}

Summarizing the discussion, we obtain the following statement. 

\begin{theorem}\label{thm-equ-structure-TN}
As a $G\times \C^*$-equivariant bundle on $X$, the pushforward tangent bundle $\pr_*(T\Nt)$ is isomorphic to
\begin{equation*}
\pr_*(T\Nt)\cong G\times_B V_1. 
\end{equation*}
\hfill$\square$
\end{theorem}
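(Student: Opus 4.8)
The plan is to take the geometric set-up already assembled in this subsection at face value and reduce the statement to a formal observation about pushouts of short exact sequences. First I would record that, by Lemma~\ref{lemma-affineness-pr}, the fibers of $\pr$ are affine, so $\pr_*$ is exact on coherent sheaves; hence the two defining short exact sequences of vector bundles on $\Nt$, namely \eqref{eqn-ses-one-on-N} and \eqref{eqn-ses-two-on-N}, stay short exact after pushforward to $X$. Applying the projection formula together with the identification \eqref{eqn-pushforward-O} of $\pr_*(\mathcal{O}_{\Nt})$ with $G\times_B S^\bullet(\lu)$ — itself justified by the fact that $\lu$ is precisely the space of linear functions on the fiber $\n$ — each term $\pr^*(G\times_B E)$ pushes forward to $G\times_B(S^\bullet(\lu)\otimes E)$. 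The infinitesimal action map $\phi$ relating the two sequences then pushes forward to the map of short exact sequences \eqref{eqn-map-ses-on-X}.

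Second, I would pin down the leftmost vertical arrow of \eqref{eqn-map-ses-on-X}. The key point is that the vector fields on $\Nt$ arising from the infinitesimal action of $\lb\subset\g$ are tangent to the fiber of $\pr$ over the base coset $eB$, because $B$ stabilizes $eB$; differentiating the linear $B$-action $\Ad_\n$ on that fiber produces exactly the map $\ad_\n\colon \lb\to\mathrm{End}_\C(\n)\cong\lu\otimes\n$ of \eqref{eqn-ad}--\eqref{eqn-ad-b}, and after pushforward and the projection formula this becomes the composite $\ad$ of \eqref{eqn-ad-map}, i.e. $\mathrm{Id}\otimes\ad_\n$ followed by the symmetrization map $\alpha$. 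Bookkeeping of the $\C^*$-weights along the way — degree $0$ on $\lb$ and $\g$, degree $2$ on $\lu$, degree $-2$ on $\n$, all maps of degree zero — shows that every identification is $\C^*$-equivariant, so the final isomorphism will hold as $G\times\C^*$-bundles and not merely as $G$-bundles.

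The last step is the homological input that actually produces the answer. In \eqref{eqn-map-ses-on-X} the rightmost vertical arrow is the identity on $G\times_B(S^\bullet(\lu)\otimes\lu)$; for any map of short exact sequences whose right-hand term is an isomorphism, the bottom middle object is canonically the pushout of the top-left square. Hence $\pr_*(T\Nt)$ is the pushout of $G\times_B(S^\bullet(\lu)\otimes\n)\xleftarrow{\ad}G\times_B(S^\bullet(\lu)\otimes\lb)\xrightarrow{\iota}G\times_B(S^\bullet(\lu)\otimes\g)$, which, since the exact functor $G\times_B(-)$ commutes with cokernels, is $G\times_B$ of the cokernel of $\Delta=(\iota,\ad)$ — exactly $V_1$ as in \eqref{eqn-V1}. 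Injectivity of $\Delta$ (whence $G\times_B V_1$ fits into the bottom row as the quotient by $G\times_B(S^\bullet(\lu)\otimes\n)$) follows from injectivity of $\iota$, and the five lemma identifies this pushout with $\pr_*(T\Nt)$. The one place that genuinely needs care — and which I regard as the main obstacle — is the verification in the second step that the left vertical map is literally $\ad$ and not a twist of it: this is a local coordinate computation on $\Nt\cong G\times_B\n$ near $eB$, unwinding how the infinitesimal $\lb$-action moves the fiber coordinates and confirming that it is the symmetrization map $\alpha$ (rather than a contraction) that emerges once the projection formula is invoked. Everything else in the argument is formal.
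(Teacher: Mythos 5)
Your proposal is correct and follows essentially the same route as the paper: exactness of $\pr_*$ from affineness of $\pr$, the projection formula together with $\pr_*(\mathcal{O}_{\Nt})\cong G\times_B S^\bullet(\lu)$, identification of the left vertical arrow with $\ad$ by differentiating the $B$-action on the fiber over $eB$, and the observation that since the right vertical arrow of \eqref{eqn-map-ses-on-X} is the identity, the left square is a pushout, giving $\pr_*(T\Nt)\cong G\times_B V_1$. The extra remarks on $\C^*$-weights and the five lemma are harmless elaborations of the same argument.
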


With respect to the $\C^*$-action, the module $V_1$ is $2 \Z$-graded. Since it is a free $S^\bullet(\lu)$-module, the grading is determined on the module generators. Elements in $\n$ have degree $-2$. The generators coming from $\g$ (and its subspace $\lb$) are homogeneous of degree zero, as the group action of $G$ and $B \subset G$ on $\Nt$ commutes with $\C^*$.  For example, the degree $-2$ part of the bundle $\pr_*(T\Nt)$ equals $G\times_B \n$, and the degree $0$ part is given by
\begin{equation}\label{eqn-tan-bundle-degree-zero-part}
G\times_B\left(\dfrac{\g\oplus \lu\otimes \n}{ \Delta(\lb)}\right),
\end{equation}
where $\Delta(x)=(x,\ad_\n(x))$ for any $x\in \lb$. This leads to 
  the following interesting observation, which we record here since it does not appear to be mentioned in the standard references.

\begin{corollary}\label{cor-deformation}
Assume $\g$ is a simple Lie algebra. Let $\mH^0(\Nt, T\Nt)^0$ denote the space of homogeneous vector fields  of degree zero with respect to the given $\C^*$-action on $\Nt$. Then $\mH^0(\Nt, T\Nt)^0$ is spanned by the vector fields coming from the infinitesimal $G$-action, and the Euler field generated by the infinitesimal $\C^*$-action along the fibers:
\[
\mH^0(\Nt,T\Nt)^0\cong \g\oplus \C.
\]
Furthermore, if $\g=\mathfrak{sl}_m$, then the degree-zero infinitesimal deformation space is isomorphic to the $G$-module\footnote{The result fails for other Lie types. Using similar methods and the equation \cite[(5.23)]{ViXue}, one can compute that for type $B_2$ the deformation space equals $\h\otimes \g \oplus L_{\alpha_1+\alpha_2}$.}
\[
\mH^1(\Nt, T\Nt)^0\cong \h \otimes \g,
\]
where $\h$ is equipped with the trivial $G$-action. 
\end{corollary}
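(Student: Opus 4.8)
The plan is to reduce the entire computation to the cohomology of the endomorphism bundle $\mathcal{E}nd(TX)$ of the tangent bundle of $X=G/B$ and then run the BGG machinery of Proposition~\ref{rel_lie_BGG}. First I would use affineness of $\pr$ (Lemma~\ref{lemma-affineness-pr}) to identify $\mH^\bullet(\Nt,T\Nt)$ with $\mH^\bullet(X,\pr_*(T\Nt))$ compatibly with the $\C^*$-grading, and invoke \eqref{eqn-tan-bundle-degree-zero-part} to identify the degree-zero part with $\mH^\bullet(X,G\times_B E_0)$, where $E_0:=(\g\oplus \lu\otimes\n)/\Delta(\lb)$ and $\Delta(x)=(x,\ad_\n(x))$. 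Since the $\g$-component of $\Delta$ is the inclusion $\lb\hookrightarrow\g$, the map $\Delta$ is injective, so we have a short exact sequence of $B$-modules $0\to\lb\xrightarrow{\Delta}\g\oplus(\lu\otimes\n)\to E_0\to0$.

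The next step is to show that the Grothendieck--Springer bundle $G\times_B\lb$ has vanishing cohomology. For this I would use the tautological sequence $0\to G\times_B\lb\to G\times_B\g\to TX\to0$: the middle term is trivial as an ordinary vector bundle (Example~\ref{eg-equiv-bundle}), so its only cohomology is $\mH^0=\g$ (the adjoint representation), and the induced map $\g=\mH^0(X,G\times_B\g)\to\mH^0(X,TX)=\g$ is the infinitesimal $G$-action, which is an isomorphism since $\mathrm{Aut}^0(G/B)$ is the adjoint group of $\g$. Hence $\mH^\bullet(X,G\times_B\lb)=0$. Feeding this into the long exact sequence of $0\to G\times_B\lb\to G\times_B(\g\oplus\lu\otimes\n)\to G\times_B E_0\to0$, and identifying $G\times_B(\lu\otimes\n)$ with $\mathcal{E}nd(TX)$ through $\n\cong\lu^*$, I obtain $\mH^0(\Nt,T\Nt)^0\cong\g\oplus\mH^0(X,\mathcal{E}nd(TX))$ and $\mH^i(\Nt,T\Nt)^0\cong\mH^i(X,\mathcal{E}nd(TX))$ for $i\ge1$, the $\g$-summand being the space of fundamental vector fields of the $G$-action.

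For the first assertion it remains to show $\mH^0(X,\mathcal{E}nd(TX))=\C$. By Theorem~\ref{Bott_rel_lie} this space is $\mathrm{End}_\lb(\g/\lb)$: an $\lb$-linear endomorphism of $\g/\lb\cong\lu$ preserves each one-dimensional root space, and connectedness of the Dynkin diagram forces the resulting scalars to agree, so only scalars occur; one checks by the same weight bookkeeping (the $L_\lambda$-multiplicity of $\mH^0$ embeds into the $w_0\lambda$-weight space of $(\g/\lb)\otimes\n$, and no nonzero antidominant weight is a difference of two positive roots) that there is no other $G$-isotypic component. The class $\mathrm{id}_{TX}$ corresponds to the fibrewise Euler vector field generated by the $\C^*$-action, which has degree zero; this gives $\mH^0(\Nt,T\Nt)^0\cong\g\oplus\C$.

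The substantive part, and the one I expect to be the main obstacle, is the computation of $\mH^1(X,\mathcal{E}nd(TX))$ for $\g=\mathfrak{sl}_m$. Here I would apply Proposition~\ref{rel_lie_BGG} with $E=\lu\otimes\n$: for each dominant weight $\lambda$ the $L_\lambda$-multiplicity of $\mH^\bullet(X,\mathcal{E}nd(TX))$ is the cohomology of the complex $\bigoplus_{l(w)=j}(\lu\otimes\n)[w\cdot\lambda]$ with the differentials \eqref{eqn-boundary-map}. The work is then (i) to isolate the few dominant $\lambda$ for which this complex can contribute in cohomological degree one --- in type $A$ one finds only $\lambda=0$ and the highest root $\lambda=\theta=\alpha_1+\cdots+\alpha_{m-1}$ --- and (ii) to compute the ranks of the relevant differentials, obtaining $\mH^1=0$ when $\lambda=0$ and $\dim\mH^1=m-1$ when $\lambda=\theta$, whence $\mH^1(X,\mathcal{E}nd(TX))\cong\h\otimes\g$ with $\h$ carrying the trivial $G$-action. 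The weight-multiplicity bookkeeping for $\mathfrak{gl}(\lu)$ and the explicit BGG maps is elementary but lengthy; it can be done by hand for $m=3,4$ and is organized in general by the triangular combinatorics of $\mathfrak{gl}(\lu)$ in type $A$. The parallel computation in type $B_2$, carried out with the structure equation \cite[(5.23)]{ViXue}, yields an extra summand $L_{\alpha_1+\alpha_2}$, which is why the clean identification $\mH^1\cong\h\otimes\g$ is special to type $A$, as noted in the footnote.
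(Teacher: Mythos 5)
Your reduction is the same as the paper's: you push down to $X$, use \eqref{eqn-tan-bundle-degree-zero-part} to get the sequence $0\to G\times_B\lb\to G\times_B(\g\oplus\lu\otimes\n)\to \pr_*(T\Nt)^0\to 0$, and kill $\mH^\bullet(X,G\times_B\lb)$ via the sequence \eqref{eqn-ses-one-on-X}; that part is fine and matches the paper (which attributes the vanishing to \cite[Section 5.1]{ViXue}). The problems are in the two remaining steps. For $\mH^0(X,{\mathcal End}_{\Ox_X}(T_X))\cong\C$ your exclusion of nontrivial isotypic components does not work at the stated level of generality. First, the multiplicity of $L_\lambda$ in $\mH^0$ embeds into $E[\lambda]$ (the image of the \emph{highest} weight vector, since $L_\lambda=U(\n)v_\lambda$), not into $E[w_0\lambda]$: evaluation at the lowest weight vector is not injective (take $E=L_\lambda/\C v_{w_0\lambda}$). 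Second, even after this repair, the combinatorial fact you invoke --- that no nonzero (anti)dominant weight is a difference of two positive roots --- is special to type $A$: in type $B_2$ one has $\alpha_1+\alpha_2=(\alpha_1+2\alpha_2)-\alpha_2$ dominant, so $(\lu\otimes\n)[\lambda]\neq 0$ for a nonzero dominant $\lambda$ and weight bookkeeping alone does not rule out extra isotypic components. Since the first assertion of the corollary is claimed for \emph{every} simple $\g$, your argument covers only type $A$ unless you additionally compute the relevant kernels case by case. The paper sidesteps this uniformly by using that $T_X$ is stable, so ${\mathcal End}_{\Ox_X}(T_X)$ has only scalar global sections (Lemma~\ref{lemma-one-diml-section-End}, via Kobayashi); your $\mathrm{End}_\lb(\lu)=\C$ computation only pins down the trivial-isotypic part.

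The more serious gap is the second assertion. The statement $\mH^1(X,G\times_B(\lu\otimes\n))\cong\h\otimes\g$ for $\g=\mathfrak{sl}_m$ is exactly what has to be proved, and your plan reduces it to ``compute the ranks of the relevant BGG differentials, obtaining $\mH^1=0$ for $\lambda=0$ and multiplicity $m-1$ for $\lambda=\theta$'' --- but no argument valid for all $m$ is given; ``organized by the triangular combinatorics of $\mathfrak{gl}(\lu)$'' is a promise, not a proof, and doing it by hand for $m=3,4$ does not establish the general case. (You would also need to justify that $0$ and $\theta$ are the only dominant weights whose shifted $s_i$-orbit meets the weights of $\lu\otimes\n$.) The paper instead imports the type-$A$ identity $\mH^k(X,G\times_B(\lb\otimes\lu))\cong\C$ for $k=0$ and $0$ otherwise from \cite[(5.7)]{ViXue}, and then gets the answer from the long exact sequence of $0\to\lu\otimes\n\to\lu\otimes\lb\to\lu\otimes\h\to 0$ together with $\mH^0(X,G\times_B(\lu\otimes\h))\cong\g\otimes\h$. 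So either quote that input, as the paper does, or actually carry out the BGG rank computation for general $m$; as written, the key quantitative step is missing.
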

\begin{proof}
Evidently, the vector fields generated by the (infinitesimal) $G\times \C^*$-action on $\Nt$ have degree zero, since this group commutes with the $\C^*$-action. Conversely, by equation \eqref{eqn-tan-bundle-degree-zero-part}, we have a short exact sequence of vector bundles on $X$:
\[
0 \lra G\times_B \lb \stackrel{\Delta}{\lra} G\times_B(\g\oplus \lu\otimes \n)\lra \mathrm{pr}_*(T\Nt)^0\lra 0.
\]
Taking the long exact sequence in cohomology, the result is reduced to the following two claims:
\begin{itemize}
\item[(1)] $\mH^i(X,G\times_B \lb)= 0$ for all $i\in \N$,
\item[(2)] $\mH^0(X,G\times_B(\lu\otimes \n))=\C$.
\end{itemize} 
Claim (1) appears in \cite[Section 5.1]{ViXue}. It follows from taking cohomology of the sequence \eqref{eqn-ses-one-on-X}, combined with the fact that 
\[
\mH^0(X,G\times_B\lu)\cong \mH^0(X,TX)\cong \g \cong \mH^0(X,G\times_B\g),\]
\[
\mH^i(X,G\times_B\lu)\cong \mH^i(X,TX)\cong 0 \cong \mH^i(X,G\times_B\g) \ \ (i\geq 1).
\]
Claim (2) is a special case of the general fact that a (partial) flag variety associated to a simple Lie algebra $\g$ is \emph{stable}, and stable manifolds have the property 
$$\mH^0(X,{\mathcal End}_{{\mathcal{O}_X}}(TX))\cong \C.$$
We formulate it as a separate Lemma \ref{lemma-one-diml-section-End}.

To show the last part of the Corollary, by claim (1), we are again reduced to compute
\[
\mH^1(T\Nt)^0 \cong \mH^1(X, G\times_B(\g\oplus \lu\otimes \n ))\cong \mH^1(X,G \times_B(\lu \otimes \n)),
\]
where the second isomorphism holds since $\mH^1(X,G\times_B\g)\cong 0$. When $\g=\mathfrak{sl}_m$, one has the following useful formula \cite[(5.7)]{ViXue}:
\[
\mH^k(X,\lb\otimes \lu)=
\left\{
\begin{array}{cc}
\C & k=0,\\
0 & k\neq 0.
\end{array}
\right.
\]
Taking cohomology of the short exact sequence of vector bundles on $X$: 
\[
0\lra G\times_B (\lu\otimes \n) \lra G\times_B(\lu\otimes \lb) \lra G\times_B (\lu \otimes \h)\lra 0, 
\]
and applying the above formula and claim (2), we obtain an isomorphism of $G$-representations
\[
\g\otimes \h \cong \mH^0(X,G\times_B(\lu\otimes \h)) \cong  \mH^1(X,G\times_B(\lu\otimes \n)).
\]
Here we have used that $G\times_B \h$ is a trivial bundle, so that 
$$\mH^0(X, G\times_B (\lu \otimes \h))\cong \mH^0(X,G\times_B \lu)\otimes \h \cong \g \otimes \h.$$ 
The result follows.
\end{proof}

\begin{lemma}\label{lemma-one-diml-section-End}
Let $\g$ be a simple Lie algebra, and $X$ its associated flag variety. Then the space of global sections in ${\mathcal End}_{{\mathcal{O}_X}}(TX)$ is one dimensional:
$$\mH^0(X,{\mathcal End}_{{\mathcal{O}_X}}(TX))\cong \C\cdot\mathrm{Id}_{TX}.$$
\end{lemma}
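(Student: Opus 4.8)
The plan is to identify $\mathcal{E}nd_{\mathcal{O}_X}(TX)$ as an equivariant bundle and then apply Bott's theorem (Theorem \ref{Bott_rel_lie}) together with Lemma \ref{H0_H1_rel_lie} to reduce the computation to a purely Lie-theoretic one. First I would note that $\mathcal{E}nd_{\mathcal{O}_X}(TX)\cong TX\otimes T^*X \cong G\times_B(\lu\otimes\n)$, using the identifications \eqref{eqn-tan-cotan-as-equiv-bdl}; here $\lu\otimes\n\cong\mathrm{End}_{\C}(\lu)$ as a $B$-module via the adjoint action. Since $\lu\otimes\n$ is an $\h$-diagonalizable finite-dimensional $\lb$-module, Lemma \ref{H0_H1_rel_lie} gives
\[
\mH^0(X,\mathcal{E}nd_{\mathcal{O}_X}(TX))\cong \mH^0(\lb,\h,\lu\otimes\n)\cong (\lu\otimes\n)^{\lb}\cong \mathrm{End}_{\lb}(\lu),
\]
where $\lb$ acts on $\lu\cong\g/\lb$ in the natural way. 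So the lemma reduces to the claim that $\mathrm{End}_{\lb}(\g/\lb)\cong\C$, i.e. that $\g/\lb$ is an indecomposable $\lb$-module with no endomorphisms other than scalars.

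The key structural fact I would invoke is that $\g/\lb\cong\lu$ has a one-dimensional lowest $\h$-weight space, spanned by the root vector $f_{\theta}$ for the lowest root $-\theta$ (equivalently, $\n$ has a one-dimensional highest-weight space spanned by $e_\theta$). Concretely, one argues: if $\varphi\in\mathrm{End}_{\lb}(\lu)$, then $\varphi$ commutes with the $\h$-action, hence preserves each $\h$-weight space of $\lu$; in particular it scales the lowest weight space $\C f_{-\theta}$ by some $c\in\C$. Replacing $\varphi$ by $\varphi - c\cdot\mathrm{Id}$, I may assume $\varphi(f_{-\theta})=0$. Now I would use that $\lu$ is generated as a $U(\lu^{-})$-... — more precisely, since $\n=[\lb,\lb]$ acts on $\lu\cong\g/\lb$, the whole of $\lu$ is obtained from the lowest-weight line by successively applying $\ad_{e_i}$ for simple root vectors $e_i$ (this is because $\g$ is simple, so the adjoint action of $\n$ is "connected" from $f_{-\theta}$ up through all of $\g/\lb$; this is exactly the statement that $\lu$, as a lowest-weight $\lb$-module, is generated by its lowest weight vector). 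Since $\varphi$ is $\lb$-linear and kills $f_{-\theta}$, it then kills all of $\lu$, so $\varphi = c\cdot\mathrm{Id}$, proving $\mathrm{End}_{\lb}(\lu)\cong\C$.

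The main obstacle is the step asserting that $\lu\cong\g/\lb$ is generated, as an $\lb$-module (equivalently as a $U(\n)$-module), by its lowest-weight line $\C f_{-\theta}$. This is the precise point where simplicity of $\g$ is used — it fails for semisimple but non-simple $\g$, where $\lu$ decomposes according to the simple factors, and correspondingly $\mH^0(\mathcal{E}nd(TX))\cong\C^{(\text{number of simple factors})}$. The cleanest way to justify it is to observe that $\g/\lb$ is the "bottom" quotient in a filtration whose graded pieces are the root spaces, ordered by height, and that the simplicity of $\g$ together with the structure of the root system forces the connectedness: any root $\beta\in R^{+}$ can be reached from $\theta$ by subtracting simple roots while staying inside $R^{+}$, and the corresponding bracket $[e_i, e_{\beta}]$ (up to nonzero scalar, and up to the $\lb$-part which is quotiented out) lands on $e_{\beta - \alpha_i}$; dualizing, $\ad_{e_i}$ connects $f_{-\beta}$-type classes inside $\g/\lb$. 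Alternatively, one could bypass this argument entirely by citing the general principle quoted in the proof of Corollary \ref{cor-deformation} — that partial flag varieties of simple Lie algebras are \emph{stable} — but since the lemma is stated as the thing to be proved, giving the self-contained Lie-algebra argument above is preferable. I would also double-check the case of small rank (e.g. $\mathfrak{sl}_2$, where $\lu$ is one-dimensional and the statement is trivial) to make sure no edge case is lost.
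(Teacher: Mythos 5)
Your reduction step is where the argument has a genuine gap. Bott's theorem (Theorem \ref{Bott_rel_lie}) combined with Lemma \ref{H0_H1_rel_lie} does \emph{not} give $\mH^0(X,\mathcal{E}nd_{\mathcal{O}_X}(TX))\cong(\lu\otimes\n)^{\lb}$; it gives, for each simple $G$-module $W$, an identification of the multiplicity space $\op{Hom}_G\bigl(W,\mH^0(X,G\times_B(\lu\otimes\n))\bigr)\cong\op{Hom}(W,\lu\otimes\n)^{\lb}=\op{Hom}_{\lb}(W,\lu\otimes\n)$. Taking $W=\C$ computes only the $G$-invariant part of the space of global sections, and your argument that $\op{End}_{\lb}(\g/\lb)=\C$ establishes exactly that invariant part. (That argument is essentially right, but note a convention slip: with the paper's conventions $\lu\cong\g/\lb$ has weights the \emph{positive} roots, is generated over $U(\n)\subset U(\lb)$ by the one-dimensional highest-root line $\C\,\overline{e}_\theta$, and the operators you get to apply are the $f_i$, not the $e_i$; there is no weight $-\theta$ line in $\lu$.) What is missing is a proof that no nontrivial simple module $L_\lambda$ occurs in $\mH^0$, i.e.\ that $\op{Hom}_B(L_\lambda,\lu\otimes\n)=0$ for every dominant $\lambda\neq 0$. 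This is not vacuous: nonzero dominant weights do occur in $\lu\otimes\n$ in general (in type $B_2$ the highest short root $\alpha_1+\alpha_2$ is dominant and is a difference of two positive roots, and the footnote to Corollary \ref{cor-deformation} shows $L_{\alpha_1+\alpha_2}$ really does appear in the neighbouring group $\mH^1$ of this very bundle), so excluding such components from $\mH^0$ requires an actual argument, e.g.\ checking that the kernel of $d_1^*$ on $(\lu\otimes\n)[\lambda]$ in Proposition \ref{rel_lie_BGG} vanishes for each such $\lambda$. A quick sanity check that your claimed isomorphism pattern loses information: applied to $E=\g$ it would give $\mH^0(X,G\times_B\g)\cong\g^{\lb}=0$, whereas by Example \ref{eg-equiv-bundle} this bundle has global sections isomorphic to the adjoint representation $\g$.

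For comparison, the paper disposes of the lemma in one line by citing the fact that the tangent bundle of the flag variety of a simple group is stable and that stable bundles on a compact K\"ahler manifold have simple endomorphism algebras (Kobayashi). Your Lie-theoretic route through Bott's theorem is genuinely different and could be made self-contained, but only after the nontrivial isotypic components are ruled out as above; as written, it proves the weaker statement $\bigl(\mH^0(X,\mathcal{E}nd_{\mathcal{O}_X}(TX))\bigr)^G\cong\C\cdot\mathrm{Id}_{TX}$.
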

\begin{proof}
The result follows from a more general fact that stable vector bundles on a compact K\"{a}hler manifold have simple endomorphism algebras (see, for instance, Proposition 4.6.2 and Corollary 5.7.14 of \cite{Kobayashi}.
\end{proof}

\subsection{Extension to the exterior product bundles} \label{sec-ext-to-exterior}
Our goal in this subsection is to understand the equivariant structure of $\pr_*(\wedge^k T\Nt)$. As a first step, we consider 
the case of tensor products.

\begin{lemma}\label{lemma-equ-str-tensor} Let $\widetilde{W_i}$, $i=1,\dots, k$, be $G\times \C^*$-equivariant vector bundles on $\Nt$, and suppose that $W_i$, $i=1,\dots, k$, are graded free $S^\bullet(\lu)$-modules with compatible $B$-actions so that
\[
\pr_*(\widetilde{W_i})\cong G\times_BW_i \quad\quad (i=1,\dots,k).
\]
Then there is an isomorphism of $G\times \C^*$-equivariant vector bundles on $X$:
\begin{equation*}
\pr_*\left(\widetilde{W_1}\otimes_{\mathcal{O}_{\Nt}}\widetilde{W_2}\otimes_{\mathcal{O}_{\Nt}}\cdots\otimes_{\mathcal{O}_{\Nt}}\widetilde{W_k}\right)\cong G\times_B\left( W_1\otimes_{S^\bullet(\lu)}W_2\otimes_{S^\bullet(\lu)}\cdots \otimes_{S^\bullet(\lu)} W_k\right).
\end{equation*}
\end{lemma}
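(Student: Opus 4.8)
The plan is to derive this from two standard facts — that pushing forward along the affine morphism $\pr$ is compatible with tensor products (over the pushed-forward structure sheaf), and that the associated-bundle construction $V\mapsto G\times_B V$ intertwines $\otimes_{S^\bullet(\lu)}$ with $\otimes_{\pr_*(\mathcal{O}_{\Nt})}$ — after first reducing to the case of two factors. The reduction to $k=2$ is a routine induction on $k$, obtained by applying the two-factor case repeatedly together with associativity of both tensor products; the hypotheses are preserved at each step, since $W_1\otimes_{S^\bullet(\lu)}W_2$ is again a graded free $S^\bullet(\lu)$-module with a compatible $B$-action and $\widetilde{W_1}\otimes_{\mathcal{O}_{\Nt}}\widetilde{W_2}$ is again a $G\times\C^*$-equivariant vector bundle on $\Nt$.

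For $k=2$: by Lemma \ref{lemma-affineness-pr}, $\pr\colon\Nt\lra X$ is affine, and for any affine morphism $f\colon Y\to Z$ and quasi-coherent $\mathcal{O}_Y$-modules $\mathcal{F},\mathcal{G}$ there is a canonical isomorphism of $f_*\mathcal{O}_Y$-modules
\[
f_*\bigl(\mathcal{F}\otimes_{\mathcal{O}_Y}\mathcal{G}\bigr)\;\cong\; f_*\mathcal{F}\otimes_{f_*\mathcal{O}_Y}f_*\mathcal{G},
\]
which one verifies over an affine open $\op{Spec}(A)\subseteq Z$, where $Y$ restricts to $\op{Spec}(B)$ for $B=(f_*\mathcal{O}_Y)(\op{Spec}(A))$ and $\mathcal{F},\mathcal{G}$ correspond to $B$-modules $M,N$: both sides are then literally $M\otimes_B N$, regarded as a $B$-module. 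Naturality of this isomorphism makes it compatible with any group action, in particular with $G\times\C^*$. Taking $f=\pr$ and substituting $\pr_*(\mathcal{O}_{\Nt})\cong G\times_B S^\bullet(\lu)$ from \eqref{eqn-pushforward-O} together with the hypotheses $\pr_*(\widetilde{W_i})\cong G\times_B W_i$ yields
\[
\pr_*\bigl(\widetilde{W_1}\otimes_{\mathcal{O}_{\Nt}}\widetilde{W_2}\bigr)\;\cong\;\bigl(G\times_B W_1\bigr)\otimes_{G\times_B S^\bullet(\lu)}\bigl(G\times_B W_2\bigr).
\]

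It then remains to identify the right-hand side with $G\times_B\bigl(W_1\otimes_{S^\bullet(\lu)}W_2\bigr)$. For this I would invoke Lemmas \ref{lemma-affineness-pr} and \ref{lemma-equivalent-equivariant-coh} in their evident $G\times\C^*$-equivariant forms: restriction to the fibre $\pr^{-1}(eB)\cong\n$ over the identity coset is an equivalence between $G\times\C^*$-equivariant coherent $\pr_*(\mathcal{O}_{\Nt})$-modules on $X$ and graded $S^\bullet(\lu)$-modules with a compatible $B$-action, carrying $G\times_B W\mapsto W$ and $\pr_*(\mathcal{O}_{\Nt})\mapsto S^\bullet(\lu)$; and it is symmetric monoidal, because restriction to a closed subscheme is (pullback being monoidal), hence so is its quasi-inverse. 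Under this equivalence both $\bigl(G\times_B W_1\bigr)\otimes_{G\times_B S^\bullet(\lu)}\bigl(G\times_B W_2\bigr)$ and $G\times_B\bigl(W_1\otimes_{S^\bullet(\lu)}W_2\bigr)$ correspond to $W_1\otimes_{S^\bullet(\lu)}W_2$, so they are isomorphic. Everything here respects the $\C^*$-grading: the $\C^*$-action is exactly the $2\Z$-grading, the tensor product $W_1\otimes_{S^\bullet(\lu)}W_2$ is graded by total degree, and each isomorphism above is induced by $\C^*$-equivariant maps and hence degree-preserving.

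I do not anticipate a genuine obstacle; both ingredients are ``general nonsense.'' The point that needs care is the bookkeeping — using $\otimes$ over $\pr_*(\mathcal{O}_{\Nt})$ (not over $\mathcal{O}_X$) on the base, and confirming that the $G\times\C^*$-equivariant and graded structures really match under each canonical isomorphism rather than merely the underlying sheaves. No finiteness subtlety enters, since the $\widetilde{W_i}$ are honest vector bundles and hence all sheaves in sight are coherent.
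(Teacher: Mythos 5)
Your proof is correct and follows essentially the same route as the paper, whose entire argument is the one-line appeal to the affineness/exactness of $\pr_*$ (Lemma \ref{lemma-affineness-pr}) and the projection formula. Your version simply spells this out — verifying the compatibility $f_*(\mathcal{F}\otimes_{\mathcal{O}_Y}\mathcal{G})\cong f_*\mathcal{F}\otimes_{f_*\mathcal{O}_Y}f_*\mathcal{G}$ on affine opens and matching the equivariant/graded structures via the fiber equivalence of Lemma \ref{lemma-equivalent-equivariant-coh} — which, if anything, is a more careful substitute for the paper's citation of the projection formula, since here neither factor need be a pullback from $X$.
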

\begin{proof}This follows from the exactness of $\pr_*$ (Lemma~\ref{lemma-affineness-pr}) and the projection formula \cite[Exercise III.8.3]{Ha}.
\end{proof}

In what follows we will take all $\widetilde{W}_i$ to be $T\Nt$, so that the $W_i$s in Lemma \ref{lemma-equ-str-tensor} are equal to $V_1$ of \eqref{eqn-V1}. In this case, the symmetric group $S_n$ acts on both sides of the equation in Lemma \ref{lemma-equ-str-tensor}. Tensoring this equation with the sign character of the symmetric group, we get the following statement. 

\begin{corollary}\label{cor-exterior-product}
The pushforward of the exterior product bundle $\wedge^k T\Nt$ onto $X$ is a $G\times \C^*$-equivariant bundle of the form
\[
\pr_*(\wedge^k T\Nt)\cong G\times_B\left(\wedge^k_{S^\bullet(\lu)} V_1\right),
\] 
where $\wedge_{S^\bullet(\lu)}^k V_1$ stands for the exterior product of the graded free module $V_1$ over the polynomial algebra $S^\bullet(\lu)$. \hfill$\square$
\end{corollary}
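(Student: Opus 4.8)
The plan is to deduce this directly from Lemma \ref{lemma-equ-str-tensor} by a standard descent-from-tensor-powers argument, using the fact that the exterior power is a natural sub- (or quotient-) object of the tensor power cut out by the symmetric group action. First I would specialize Lemma \ref{lemma-equ-str-tensor} to the case $\widetilde{W_i} = T\Nt$ for all $i = 1,\dots,k$, which by Theorem \ref{thm-equ-structure-TN} gives $W_i = V_1$, yielding a $G\times\C^*$-equivariant isomorphism
\[
\pr_*\left( (T\Nt)^{\otimes_{\mathcal{O}_{\Nt}} k}\right)\cong G\times_B\left(V_1^{\otimes_{S^\bullet(\lu)} k}\right).
\]
Next I would observe that $S_k$ acts by permutation of tensor factors on both sides, and that this action is by isomorphisms of $G\times\C^*$-equivariant bundles on the left and by $B$-module automorphisms of the $S^\bullet(\lu)$-module on the right; moreover the isomorphism above is $S_k$-equivariant, since the projection formula and the exactness of $\pr_*$ used in proving Lemma \ref{lemma-equ-str-tensor} are natural in the $\widetilde{W_i}$, and in particular commute with permuting the factors. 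The exterior power $\wedge^k T\Nt$ is, by definition over the base ring $\mathcal{O}_{\Nt}$ (or equivalently over $S^\bullet(\lu)$ after pushforward), the image of the antisymmetrization idempotent $e_- = \frac{1}{k!}\sum_{\sigma\in S_k}\mathrm{sgn}(\sigma)\,\sigma$ acting on the $k$-th tensor power — this makes sense since we work over a field of characteristic zero. Applying the functor $\pr_*$, which is exact and hence commutes with taking the image of this idempotent, and using $G\times\C^*$-equivariance of $e_-$, I would conclude
\[
\pr_*(\wedge^k T\Nt)\cong \pr_*\left(e_-\cdot (T\Nt)^{\otimes_{\mathcal{O}_{\Nt}} k}\right)\cong e_-\cdot \left(G\times_B V_1^{\otimes_{S^\bullet(\lu)} k}\right)\cong G\times_B\left(\wedge^k_{S^\bullet(\lu)} V_1\right),
\]
where in the last step I use that the construction $G\times_B(-)$ is an exact functor from $B$-modules (here, $B$-equivariant $S^\bullet(\lu)$-modules) to equivariant sheaves on $X$, so it commutes with the idempotent $e_-$, and that $e_-$ applied to the $k$-th tensor power of an $S^\bullet(\lu)$-module is by definition its $k$-th exterior power over $S^\bullet(\lu)$.

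The main point to be careful about — and the step I expect to require the most attention — is the compatibility of all the identifications with the $S_k$-action: one must check that the isomorphism furnished by Lemma \ref{lemma-equ-str-tensor} intertwines the geometric permutation action on $\pr_*\big((T\Nt)^{\otimes k}\big)$ with the algebraic permutation action on $G\times_B\big(V_1^{\otimes_{S^\bullet(\lu)} k}\big)$. This is really a naturality statement for the projection-formula isomorphism $\pr_*(\mathcal{F}\otimes \pr^*\mathcal{G})\cong \pr_*(\mathcal{F})\otimes\mathcal{G}$ together with the description $\pr_*(T\Nt)\cong G\times_B V_1$ as $S^\bullet(\lu) = \pr_*(\mathcal{O}_{\Nt})$-modules; since permutation of tensor factors is induced by the universal property of tensor products, the intertwining is automatic once one writes everything out carefully. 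Everything else — exactness of $\pr_*$ (Lemma \ref{lemma-affineness-pr}), exactness of $G\times_B(-)$, and the fact that $\wedge^k$ of a module over a commutative ring is the $e_-$-isotypic piece of its tensor power in characteristic zero — is standard. Finally, I would remark that the graded structure claimed in the statement is immediate: the $\C^*$-action induces the grading on $V_1$ described after Theorem \ref{thm-equ-structure-TN}, and $\wedge^k_{S^\bullet(\lu)} V_1$ inherits the resulting grading, with all identifications above being $\C^*$-equivariant by construction.
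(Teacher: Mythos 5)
Your proposal is correct and follows essentially the same route as the paper: the paper also specializes Lemma \ref{lemma-equ-str-tensor} to $\widetilde{W_i}=T\Nt$ (so $W_i=V_1$) and then extracts the exterior power by passing to the sign-isotypic part of the $S_k$-action on both sides, which is exactly your antisymmetrizer $e_-$ argument in characteristic zero. Your write-up simply makes explicit the $S_k$-equivariance of the projection-formula isomorphism and the compatibility of $\pr_*$ and $G\times_B(-)$ with the idempotent, which the paper leaves implicit.
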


Let $V_k$ denote the obtained graded $S^\bullet(\lu)$-module: 
\begin{equation}\label{eqn-Vn}
V_k:=\wedge^k_{S^\bullet(\lu)}V_1.
\end{equation}
It carries a natural $B\times \C^*$-action induced from the $B \times \C^*$-action on $V_1$.
We will also write the total exterior algebra of $V_1$ as
\begin{equation}\label{eqn-V-bullet}
V_\star:=\bigoplus_{k=0}^{2n}\wedge^k_{S^\bullet(\lu)}V_1,
\end{equation}
where $n=\mathrm{rank}(V_1)/2=\mathrm{dim}(X)$. The grading on $V_\star$ comes from the $\C^*$-action.  To write it out more explicitly, consider the natural surjective map 
\begin{equation}\label{eqn-surj-onto-Vn}
\wedge^k_{S^{\bullet}(\lu)}\left(S^{\bullet}(\lu)\otimes \g\oplus S^{\bullet}(\lu)\otimes \n\right)\lra V_k,
\end{equation}
which is obtained by taking exterior powers of the natural projection map
\[
S^{\bullet}(\lu)\otimes \g\oplus S^{\bullet}(\lu)\otimes \n\lra V_1.
\]
Summing over all $k\in \N$, we obtain a map of total exterior algebras 
\begin{equation}\label{eqn-surj-onto-V-star}
\wedge^\star_{S^{\bullet}(\lu)}\left(S^{\bullet}(\lu)\otimes \g\oplus S^{\bullet}(\lu)\otimes \n\right)\lra V_\star,
\end{equation}

Since $\Delta(S^\bullet(\lu)\otimes \lb)$ sits inside $S^{\bullet}(\lu)\otimes \g\oplus S^{\bullet}(\lu)\otimes \n$ as a direct $S^\bullet(\lu)$-module summand, the kernel of \eqref{eqn-surj-onto-V-star} equals the ideal generated by the submodule $\Delta(S^\bullet(\lu)\otimes \lb)$ on the left-hand side. It follows that the module $V_\star$ can be identified with
\begin{equation}\label{eqn-structure-V-star}
V_\star\cong \dfrac{\wedge_{S^{\bullet}(\lu)}^\star(S^{\bullet}(\lu)\otimes \g\oplus S^{\bullet}(\lu)\otimes \n)}{\Delta(S^\bullet(\lu)\otimes \lb)\wedge_{S^{\bullet}(\lu)} \left(\wedge_{S^{\bullet}(\lu)}^{\star-1}(S^{\bullet}(\lu)\otimes \g\oplus S^{\bullet}(\lu)\otimes \n)\right)} \ .
\end{equation}

For instance,  the term $V_2$ is isomorphic to
\begin{equation}\label{eqn-V-2}
V_2\cong
\dfrac{S^{\bullet}(\lu)\otimes \g\wedge \g \oplus S^{\bullet}(\lu)\otimes \g\otimes \n \oplus S^{\bullet}(\lu)\otimes \n\wedge \n}
{\Delta(S^{\bullet}(\lu)\otimes \lb)\wedge (S^{\bullet}(\lu)\otimes\g\oplus S^{\bullet}(\lu)\otimes \n)}.
\end{equation}
We can read off the homogeneous terms easily from this expression, using that $\mathrm{deg}(\lb)=\mathrm{deg}(\g)=0$, $\mathrm{deg}(\n)=-2$ and $\mathrm{deg}(\lu)=2$. Write 
\begin{equation}\label{eqn-homogeneous-term-V-star}
V_\star= \bigoplus_{k\in \Z}V_\star^k
\end{equation}
for the homogeneous decomposition of $V_\star$ with respect to the $\C^*$-grading.Then the lowest degree term of $V_2$ is just $V_2^{-4}=\n\wedge \n$ of degree $-4$. The degree $-2$ term equals
\begin{equation}\label{eqn-V2-minus-2}
V_2^{-2}=\dfrac{\g\otimes \n \oplus \lu \otimes (\n\wedge \n)}{\Delta_2(\lb\otimes \n)}.
\end{equation}
Here, $\Delta_2$ is the map induced from $\Delta=(\iota,\ad_\n):\lb\lra \g\oplus \lu\otimes\n$.
\begin{equation}\label{eqn-Delta2-map}
\Delta_2: \lb \otimes \n \xrightarrow{\Delta \otimes \mathrm{Id}_{\n}} \g\otimes \n \oplus \lu \otimes \n\otimes \n \stackrel{\beta}{\lra} \g\otimes \n \oplus \lu \otimes (\n\wedge \n),
\end{equation}
with $\beta$ being the anti-symmetrization map $\n\otimes \n \lra \n\wedge \n$.

Next, for $V_3$, the two lowest degree terms are given by
\begin{equation}\label{eqn-V3-minus-6}
V_3^{-6}=\n \wedge \n \wedge \n.
\end{equation}
\begin{equation}\label{eqn-V3-minus-4}
V_3^{-4}=\dfrac{\g\otimes (\n \wedge \n)\oplus \lu \otimes (\n\wedge \n\wedge \n)}{\Delta_3(\lb\otimes (\n\wedge \n))},
\end{equation}
where $\Delta_3$ is the composition map of $\Delta \otimes \mathrm{Id}_{\n\wedge \n}$ by the anti-symmetrization map $\beta$:
\begin{equation}\label{eqn-Delta-3}
\lb \otimes (\n\wedge \n) \xrightarrow{\Delta \otimes \mathrm{Id}_{\n\wedge\n}} \g\otimes (\n\wedge \n) \oplus \lu \otimes \n\otimes (\n\wedge\n) \stackrel{\beta}{\lra} \g\otimes (\n\wedge \n) \oplus \lu \otimes (\n\wedge \n \wedge \n).
\end{equation}

\paragraph{A duality.} In the last part of this section, we will discuss some basic duality results on the exterior product of the tangent bundle $T\Nt$. This will be applied later to cut down the amount of cohomological computations ``almost'' by half.

Recall that the Springer variety $\Nt$ is holomorphic symplectic, i.e., it is equipped with a non-degenerate, anti-symmetric $\Ox_{\Nt}$-linear pairing $\omega$:
\begin{equation}\label{eqn-symp-form}
\omega: T\Nt \otimes_{\mathcal{O}_{\Nt}} T\Nt \lra \mathcal{O}_{\Nt}.
\end{equation}
The symplectic form $\omega$ induces an isomorphism of bundles by contraction
\begin{equation}\label{eqn-tang-iso-cotang}
\iota_\omega : T\Nt \stackrel{\cong }{\lra} T^*\Nt.
\end{equation}
Furthermore, since the top exterior power of $T^*\Nt$ is trivial, we have, for each $k\in \{0,\dots, n\}$, a non-degenerate pairing
\begin{equation}\label{eqn-wedge-pairing}
\wedge^k T\Nt  \otimes_{\mathcal{O}_{\Nt}} \wedge^{2n-k}T\Nt\lra \wedge^{2n}T\Nt \cong \Ox_\Nt,
\end{equation}
given by fiberwise exterior product, where $n=\mathrm{dim}(\n)=\mathrm{dim}(X)$. It follows by combining 
\eqref{eqn-wedge-pairing} and \eqref{eqn-tang-iso-cotang} that
\begin{equation}\label{eqn-duality-Nt}
\wedge^k T\Nt  \stackrel{\textrm{\eqref{eqn-wedge-pairing}}}{\cong}  \mathcal{H}om_{\Ox_{\Nt}}(\wedge^{2n-k} T\Nt, \Ox_{\Nt}) \cong \wedge^{2n-k} T^*\Nt \stackrel{\textrm{\eqref{eqn-tang-iso-cotang}}}{\cong} \wedge^{2n-k}T\Nt.
\end{equation}
Pushing \eqref{eqn-duality-Nt} forward to the flag variety and applying Corollary \ref{cor-exterior-product}, we obtain the following result identifying the corresponding $B$-modules.

\begin{lemma}\label{lemma-half-reduction}
Let $n=\mathrm{dim}(X)$. For each $k\in \{0,\dots, n\}$, there is an isomorphism of graded $B$-modules
\[
V_k\cong V_{2n-k}.
\]
More precisely, there is an isomorphism of $B$-modules
\[
V_k^{-2r}\cong V_{2n-k}^{-2(n+r-k)}
\] 
for any $r\in \Z$.
\end{lemma}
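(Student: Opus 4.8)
The plan is to make the chain of isomorphisms in \eqref{eqn-duality-Nt} equivariant for the $G\times\C^*$-action and then push it down to $X$ using Corollary \ref{cor-exterior-product}. First I would check that the symplectic form $\omega$ on $\Nt=T^*X$ is $G$-invariant and homogeneous of a definite weight for the $\C^*$-action. The canonical symplectic form on a cotangent bundle is $d$ of the tautological $1$-form; since $\C^*$ scales the fibers of $\pr$ by $z^{-2}$, the tautological $1$-form has weight $-2$, hence so does $\omega$. Consequently the contraction isomorphism $\iota_\omega:T\Nt\xrightarrow{\cong}T^*\Nt$ shifts the $\C^*$-degree by $2$ (i.e.\ it is an isomorphism $T\Nt\cong T^*\Nt\langle 2\rangle$ of $\C^*$-equivariant bundles, where $\langle\cdot\rangle$ denotes grading shift), and it is $G$-equivariant. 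Taking $k$-th exterior powers, $\wedge^k T\Nt\cong \wedge^k T^*\Nt$ with a $\C^*$-degree shift by $2k$.

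Next I would pin down the $\C^*$-weight of the trivialization $\wedge^{2n}T^*\Nt\cong\Ox_\Nt$. Since $\wedge^{2n}T^*\Nt$ is the top exterior power and $T^*\Nt$ has, relative to $X$, the fiber-cotangent directions (dual to $\n$, so $\C^*$-degree $+2$) and the base-cotangent directions (which come from $T^*X$, carrying $\C^*$-degree $0$ because $\C^*$ acts trivially on $X$; more precisely the relevant twist is exactly that of $\pr^*T^*X$), the canonical bundle $\wedge^{2n}T^*\Nt$ is isomorphic to $\Ox_\Nt$ but with a $\C^*$-degree shift by $2n$. Combining this with the perfect pairing \eqref{eqn-wedge-pairing}, which lands in $\wedge^{2n}T\Nt\cong\wedge^{2n}T^*\Nt\langle 4n\rangle\cong\Ox_\Nt\langle 2n\rangle$ (using the contraction on the top power and the previous computation), one gets a $G$-equivariant, $\C^*$-homogeneous identification $\wedge^k T\Nt\cong\mathcal{H}om_{\Ox_\Nt}(\wedge^{2n-k}T\Nt,\Ox_\Nt\langle 2n\rangle)$, and then via $\iota_\omega$ again this becomes $\wedge^k T\Nt\cong\wedge^{2n-k}T\Nt$ with a total $\C^*$-degree shift that I would track carefully through the three arrows. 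Keeping careful bookkeeping of these grading shifts is the main obstacle — there is no conceptual difficulty, but one must be consistent about whether contracting with $\omega$ raises or lowers degree and about the weight of the canonical trivialization, and the final shift must come out to match the claimed formula $V_k^{-2r}\cong V_{2n-k}^{-2(n+r-k)}$.

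Finally I would apply $\pr_*$ to the $G\times\C^*$-equivariant isomorphism of bundles on $\Nt$ just obtained. Since $\pr$ is affine (Lemma \ref{lemma-affineness-pr}), $\pr_*$ is exact and, by the projection formula as used in Corollary \ref{cor-exterior-product}, sends $\wedge^k T\Nt$ to the equivariant bundle $G\times_B(\wedge^k_{S^\bullet(\lu)}V_1)=G\times_B V_k$, preserving the $\C^*$-grading up to the shift recorded above. By Lemma \ref{lemma-equivalent-equivariant-coh}, an isomorphism of $G\times\C^*$-equivariant bundles on $X$ is the same as an isomorphism of the fiber $B\times\C^*$-representations over $eB$, so we conclude $V_k\cong V_{2n-k}$ as graded $B$-modules, with the degree-$(-2r)$ piece of $V_k$ matching the degree-$(-2(n+r-k))$ piece of $V_{2n-k}$. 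As a sanity check I would verify the extreme cases: for $k=0$, $V_0=S^\bullet(\lu)$ concentrated in nonnegative degrees, and $V_{2n}=\wedge^{2n}_{S^\bullet(\lu)}V_1$ should be $S^\bullet(\lu)$ shifted so that its bottom degree is $-4n+2n=-2n$ matched against... — more simply, checking $V_n^{-2r}\cong V_n^{-2r}$ forces the self-consistency $r = n+r-k$ at $k=n$, which holds, and checking the lowest-degree term $V_n^{-2n}=\wedge^n_{S^\bullet}\!(\text{the }\n\text{-part})=\wedge^n\n\otimes S^\bullet(\lu)$ is self-dual. These checks would confirm the sign and shift conventions are right.
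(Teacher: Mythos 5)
Your argument is correct and follows essentially the same route as the paper: the ungraded isomorphism is the equivariant duality chain \eqref{eqn-duality-Nt} (contraction with $\omega$ together with the wedge pairing \eqref{eqn-wedge-pairing} into the trivialized canonical bundle), pushed down to $X$ via Corollary \ref{cor-exterior-product}, and the entire content is the grading bookkeeping, which your two inputs (the $\C^*$-weight of magnitude $2$ for $\omega$ and of magnitude $2n$ for the canonical trivialization) already determine. The paper simply carries out that bookkeeping explicitly, by evaluating the pushforward of \eqref{eqn-duality-Nt} on Chevalley-basis monomials $\overline{e}_I\wedge f_J$ in the fiber over $eB$, which also pins down the sign conventions you left implicit (in the paper's normalization $\omega$ has degree $+2$, not $-2$).
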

\begin{proof} Without the grading, the statement follows directly by the discussion before the Lemma, and thus we are reduced to match the gradings involved.

To do so, consider $T_x\Nt$ where $x$ is any point on $\Nt$ living over the identity coset $eB\in X$. Choose a Chevalley basis for $\g$
$$\{e_1,\dots, e_n, h_1,\dots, h_l, f_1,\dots, f_n|e_i\in \lu, f_i\in \n~(i=1,\dots, n),~h_k\in \h,~(k=1,\dots, l)\}$$
such that  $\{e_1,\dots, e_n, f_1,\dots, f_n\}$ form a dual basis under the symplectic pairing $\omega$: 
\[
\omega(e_i,f_j)=-\omega(f_j,e_i) = \delta_{i,j}.
\]
Then, by the $S^\bullet(\lu)$-module structure of $V_1$ (see \eqref{eqn-V1}), the elements in the collections 
$$
\{\overline{e}_i:=(e_i,0)\in (\g\oplus \lu\otimes \n)/\Delta(\lb)|i=1,\dots, n\}, \quad \quad \{f_i\in \n|i=1,\dots, n\}$$ 
form an $S^\bullet (\lu)$-module basis for $V_1$. 

For any unordered subset $I=\{i_1,\dots, i_k\}\subset \{1,\dots, n\}$, denote by $I^o$ the complementary set $\{1,\dots, n\}\backslash I$. We will write
$$e_I:=e_{i_1}\wedge \cdots \wedge e_{i_k}, \quad f_I:=f_{i_1}\wedge \cdots \wedge f_{i_k}.$$

Given any size-$k$ subsets $I,J\subset \{1,\dots, n\}$, $\omega$ extends to the pairing on $\wedge^k(T_x\Nt)$ by setting
\[
\omega(e_I,f_J)=\omega(e_{i_1},f_{j_1})\cdots\omega(e_{i_k},f_{j_k})=\delta_{I,J}.
\]
It induces, via pushing forward, a non-degenerate $\op{pr}_*(\mathcal{O}_\Nt)$-linear antisymmetric pairing
\[
\op{pr}_*(\wedge^k T\Nt)\otimes_{\op{pr}_*(\mathcal{O}_\Nt)}\op{pr}_*(\wedge^k T\Nt) \lra \op{pr}_*(\mathcal{O}_\Nt),
\]
which by abuse of notation is still denoted as $\omega$. Upon restriction to the fiber over $eB$, we have an $S^\bullet(\lu)$-linear anti-symmetric pairing on $V_k$ defined similarly by
\[
\omega(\overline{e}_I,f_J)=\omega(\overline{e}_{i_1},f_{j_1})\cdots\omega(\overline{e}_{i_k},f_{j_k})=\delta_{I,J}.
\]
The wedge pairing \eqref{eqn-wedge-pairing} also descends to $X$ to be an $S^\bullet(\lu)$-linear map
\[
V_k\otimes_{S^\bullet(\lu)} V_{2n-k}\stackrel{\wedge}{\lra} V_{2n}\cong S^\bullet(\lu) \overline{e}_1\wedge \cdots \wedge \overline{e}_n\wedge f_1\wedge \cdots \wedge f_n .
\]
We can then check that the pushfoward of isomorphism \eqref{eqn-duality-Nt} at the identity coset sends $S^\bullet(\lu)$-basis elements consecutively to
\[
\begin{array}{ccccc}
V_k & \stackrel{\cong}{\lra}   & \mathrm{Hom}_{S^\bullet(\lu)}(V_{2n-k},S^\bullet(\lu)) & \stackrel{\cong}{\lra} & V_{2n-k}\\
\overline{e}_I\wedge f_{J} & \mapsto & \pm \overline{e}_{I^o}\wedge f_{J^o} & \mapsto & \pm \overline{e}{_{J^o}}\wedge f_{I^o}\ ,
\end{array}
\]
where $J\subset\{1,\dots, n\}$ is a subset of size $r$ and $I$ is of size $k-r$. Since $\mathrm{deg}(f_i)=-2$ and $\mathrm{deg}(\overline{e}_i)=0$,
the degree $-2r$ element $\overline{e}_I\wedge f_{J}
\in V_k^{-2r}$ is sent to $\pm \overline{e}{_{J^o}}\wedge f_{I^o} \in V_{2n-k}^{-2(n-(k-r))} $. The result now follows.
\end{proof}

\begin{corollary}\label{cor-half-reduction}
Let $n=\mathrm{dim}(X)$. For each fixed $i, k \in \{0, \dots, n\}$, there is an isomorphism of Hochschild cohomology groups
\[
\mH^i(\Nt,\wedge^kT\Nt)^{-2r}\cong \mH^i(\Nt,\wedge^{2n-k} T\Nt)^{-2n+2k-2r}.
\]
\end{corollary}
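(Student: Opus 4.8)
The plan is to transport the entire computation from $\Nt$ down to the flag variety $X$, where the statement reduces to the $B$-module isomorphism already established in Lemma~\ref{lemma-half-reduction}.

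First I would invoke Lemma~\ref{lemma-affineness-pr}: since $\pr\colon\Nt\to X$ is affine, the higher direct images $R^j\pr_*$ of any quasi-coherent sheaf vanish for $j>0$, so the Leray spectral sequence for $\pr$ collapses and yields, for every $i$,
\[
\mH^i(\Nt,\wedge^k T\Nt)\cong \mH^i\bigl(X,\pr_*(\wedge^k T\Nt)\bigr).
\]
This isomorphism is $\C^*$-equivariant for the dilation action on the fibers of $\pr$, hence respects the weight decomposition. Combining it with Corollary~\ref{cor-exterior-product}, which identifies $\pr_*(\wedge^k T\Nt)\cong G\times_B V_k$ with $V_k=\wedge^k_{S^\bullet(\lu)}V_1$, and passing to the degree $-2r$ homogeneous part (the $\C^*$-grading lives entirely on the $B$-module $V_k$, so the degree $-2r$ component of $G\times_B V_k$ is $G\times_B V_k^{-2r}$), I obtain
\[
\mH^i(\Nt,\wedge^k T\Nt)^{-2r}\cong \mH^i\bigl(X,G\times_B V_k^{-2r}\bigr).
\]

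Next I would apply Lemma~\ref{lemma-half-reduction}, which gives an isomorphism of $B$-modules $V_k^{-2r}\cong V_{2n-k}^{-2(n+r-k)}$. An isomorphism of $B$-modules induces an isomorphism of the associated equivariant bundles $G\times_B V_k^{-2r}\cong G\times_B V_{2n-k}^{-2(n+r-k)}$ on $X$, hence an isomorphism of their sheaf cohomology in every degree. Running the first reduction in reverse for the bundle $\wedge^{2n-k}T\Nt$ in $\C^*$-weight $-2(n+r-k)$ gives
\[
\mH^i\bigl(X,G\times_B V_{2n-k}^{-2(n+r-k)}\bigr)\cong \mH^i(\Nt,\wedge^{2n-k}T\Nt)^{-2(n+r-k)},
\]
and composing the three isomorphisms, together with the identity $-2(n+r-k)=-2n+2k-2r$, yields the claim.

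The substance of the argument is entirely concentrated in Lemma~\ref{lemma-half-reduction}, and in particular in the precise degree shift $-2r\mapsto -2(n+r-k)$ coming from the holomorphic symplectic duality \eqref{eqn-duality-Nt} on $\Nt$; once that $B$-module identification is in hand the passage to cohomology is purely formal. I therefore expect no genuine obstacle here beyond careful grading bookkeeping: one must check that the $\C^*$-weight in $\pr_*(\wedge^{2n-k}T\Nt)$ produced by Lemma~\ref{lemma-half-reduction} is exactly $-2(n+r-k)$ and that this coincides with $-2n+2k-2r$, which is immediate.
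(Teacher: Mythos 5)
Your proposal is correct and coincides with the paper's own proof: the paper likewise chains $\mH^i(\Nt,\wedge^kT\Nt)^{-2r}\cong \mH^i(X,G\times_B V_k^{-2r})\cong \mH^i(X,G\times_B V_{2n-k}^{-2(n-k+r)})\cong \mH^i(\Nt,\wedge^{2n-k}T\Nt)^{-2n+2k-2r}$, with the pushforward identification from Lemma~\ref{lemma-affineness-pr} and Corollary~\ref{cor-exterior-product} and the substance carried by Lemma~\ref{lemma-half-reduction}. You merely spell out the Leray-collapse and grading bookkeeping that the paper leaves implicit.
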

\begin{proof}
Use the isomorphisms
\begin{align*}
\mH^i(\Nt,\wedge^kT\Nt)^{-2r} & \cong \mH^i(X, G\times_B V_k^{-2r})\cong \mH^i(X, G\times_B  V_{2n-k}^{-2(n-k+r)})\\
 & \cong \mH^i(\Nt,\wedge^{2n-k}T\Nt)^{-2n+2k-2r}.
\end{align*}
The claim follows.
\end{proof}

\paragraph{Summary.}To conclude, let us summarize the main results of this section. 

By Theorem \ref{Hoh} cited from \cite{BeLa}, the degree-zero Hochschild cohomology of the principal block of $u_q(\g)$ can be computed as the sheaf cohomology of the push-forward polyvector fields $\mathrm{pr}_*(\wedge^k T \Nt)^{-2r}$ ($k,r\in \N$) over the flag variety $X=G/B$, where the degree $-2r$  comes from the $\C^*$ action.  Now Corollary \ref{cor-exterior-product} reveals the $B$-structure of the bundles 
$$\mathrm{pr}_*(\wedge^k T \Nt)^{-2r}\cong G\times_B V_k^{-2r},$$
 thus allowing us to use Bott's Theorem \ref{Bott_rel_lie}  to compute the multiplicity of each $G$-isotypical component of the center via (relative) Lie algebra cohomology.  Further, Proposition \ref{rel_lie_BGG} uses the BGG theory to establish an equivalence between the required Lie algebra cohomology and the cohomology of a complex defined entirely in terms of the action of $U(\n)$ on the finite-dimensional $B$-module $V_k^{-2r}$.  Finally Corollary \ref{cor-half-reduction} uses various dualities to obtain symmetries between the bigraded components of the center and reduce the computations almost by half. 

These results will allow us to compute the center of the principal block of $u_q(\mathfrak{sl}_3)$ as a bigraded vector space, and to outline the method for computing the structure of the center of the principal block 
for $u_q(\mathfrak{sl}_4)$. The method should work in general  for any semisimple Lie algebra $\g$; however the maps in the complex 
(\ref{F_BGG}) in Proposition \ref{rel_lie_BGG} depend on $\g$ and on the dominant weight $\lambda$  (although the latter dependence is minor since one can simply tensor the resolution for $L_0$ with $L_\lambda$), and they need to be determined in each case separately.


\section{The center of the principal block of quantum \texorpdfstring{$\mathfrak{sl}_3$}{sl(3)}}\label{sec-sl3-center}

Our goal in this section is to compute the sheaf cohomology groups appearing on the right-hand side of Theorem \ref{Hoh} in the case of $\g = \mathfrak{sl}_3$. We will use the general techniques developed in the previous section, as well as some methods 
specific to the given case. 

\subsection{Notation} \label{sec-ntn}
Let $G$ be the complex simple group $SL_3( \C)$ and $B \subset G$ be the Borel subgroup which consists of invertible lower triangular matrices of determinant one. We will denote by $\g$ the Lie algebra of $G$, which consists of traceless $3\times 3$-matrices, and by $\mathfrak{b}$ the Lie algebra of $B$ consisting of traceless lower triangular $3\times 3$ matrices. Also set $\n:=[\mathfrak{b},\mathfrak{b}]$ to be the Lie algebra of the unipotent subgroup $N:=[B,B]\subset B$ (strictly lower triangular matrices), and $\h$ to be the Lie algebra of the diagonal torus subgroup $H$ of $B$. The group $B$ acts on $\lb$  by the adjoint action, and $\n$ is an invariant subspace. The induced $B$-action on $\h\cong \lb/\n$ is then trivial.   The dual representation $\n^*$ of $\n$ is isomorphic to $\lu$ once we fix a $B$-invariant pairing, which we will do below.

Let us choose a Chevalley basis for the $B$-modules. Set
\begin{equation}\label{eqn-Chevalley-basis-ef}
f_1=\left(
\begin{matrix}
0 & 0 & 0\\
0 & 0 & 0\\
0 & 1 & 0
\end{matrix}
\right),\quad
f_2=\left(
\begin{matrix}
0 & 0 & 0\\
1 & 0 & 0\\
0 & 0 & 0
\end{matrix}
\right),\quad
e_1=\left(
\begin{matrix}
0 & 0 & 0\\
0 & 0 & 1\\
0 & 0 & 0
\end{matrix}
\right),\quad
e_2=\left(
\begin{matrix}
0 & 1 & 0\\
0 & 0 & 0\\
0 & 0 & 0
\end{matrix}
\right),
\end{equation}
\begin{equation}\label{eqn-Chevalley-basis-h}
h_1=\left(
\begin{matrix}
0 & 0 & 0\\
0 & 1 & 0\\
0 & 0 & -1
\end{matrix}
\right),
\quad
h_2=\left(
\begin{matrix}
1 & 0 & 0\\
0 & -1 & 0\\
0 & 0 & 0
\end{matrix}
\right) .
\end{equation}
The $\lb$-modules $\n$, $\lb$ and $\g$ are then spanned by 
\[ 
\n =\C f_1\oplus \C f_2 \oplus \C f_3, \quad  \lb = \n \oplus \C h_1\oplus \C h_2,  \quad \g =\lb\oplus \C e_1 \oplus \C e_2 \oplus \C e_3,
\]
where $f_3=[f_1,f_2]$ and $e_3=[e_2,e_1]$.
The subspace $\lu = \C e_1 \oplus \C e_2 \oplus \C e_3 \subset \g$  
 can be identified with $\n^*$ using the nondegenerate pairing 
$\langle e, f \rangle = \frac{1}{6} \op{tr}_{\g}(\op{ad}(e) \, \op{ad}(f))$ 
for any $e \in \lu$, $f \in \n$. With this definition we have $\langle e_i, f_j \rangle = \delta_{i,j}$. Then $\lu$ is a $\lb$-module 
by the (co)adjoint action: $\op{ad}_\lu(y) : \lu \to \lu$ defined by the formula $\langle \op{ad}_\lu(y) (e), \,f  \rangle = \langle e, -\op{ad}_\n(y) (f) \rangle$ for any $e \in \lu$, $y, f \in \n$.  
The Lie algebra $\lb$ maps to $\lu \otimes \n \cong \op{End}(\n)$ as it acts on $\n$ by the adjoint action: 
\begin{equation}\label{eqn-ad-sl3}
\op{ad}_{\n} : \lb \to \lu \otimes \n , \quad  \op{ad}_{\n}(y) = \sum \op{ad}_{\n}(y)_1 \otimes \op{ad}_{\n}(y)_2,
\end{equation}
where the components of $\op{ad}_{\n}(y)$ are defined by the condition  
 $$\op{ad}_{\n}(y)(f) = \sum \langle \op{ad}_{\n}(y)_1 , f \rangle \op{ad}_{\n}(y)_2. $$ 
 See equation \eqref{eqn-ad-b} and the discussion there for the general case.

Then we compute the effect of the map $\ad_\n$ on the basis of $\lb$: 
\begin{equation} \label{phi} 
 \begin{array}{lll} 
\op{ad}_{\n}(h_1)  &  = &  -2 e_1 \otimes f_1 + e_2 \otimes f_2 - e_3 \otimes f_3,   \\ 
\op{ad}_{\n}(h_2)  &  =  &  e_1 \otimes f_1 - 2 e_2 \otimes f_2  - e_3 \otimes f_3, \\ 
\op{ad}_{\n}(f_1) & =     & e_2 \otimes f_3, \\  
\op{ad}_{\n}(f_2)  &  =    &  - e_1 \otimes f_3, \\ 
\op{ad}_{\n}(f_3)  & = &  0 .  \\
\end{array} 
\end{equation}  

We will continue to use $X$ to denote the flag variety $G/B$ in this case, and write the tangent and cotangent sheaves as $T_X$ and $\Omega_X$ respectively.

In particular, for $G = SL(3, \C)$ the Theorems \ref{BWB} and \ref{thm-flag-cohomology} specialize to the following result:
\[ 
\mH^i(X, \Omega_X^j) 
\cong  
\left\{ 
\begin{array}{ll}
L_0,  &   i=j=0, \;{\rm or}\; i=j=3, \\
L_0^{\oplus 2},  & i=j=1, \; {\rm or} \; i=j=2, \\
0   & {\rm otherwise.}
\end{array} \right. 
\]
Here and below, $L_\lambda$ denotes the irreducible $G$-module with highest weight $\lambda$. In particular, $L_0 \cong \C$.


\subsection{Sheaf cohomology of some vector bundles over the flag variety}\label{sec-some-sheaves}
In this subsection, we will use various methods to compute the cohomology of some vector bundles over the flag variety in the case of $\mathfrak{sl}_3$. 
In particular, we will be interested in the sheaf cohomology 
of tensor product vector bundles of the form
\[  
\mH^\bullet (X,  \Omega^r_{X} \otimes \wedge^s T_{X} ) . 
\]
for the values of $r$ and $s$ required for the computation of the center. These vector bundles are composition factors 
in the sheaves  $\pr_*(\wedge^{r+s}T\Nt)$ that appear in Theorem \ref{Hoh}. 

For the ease of notation, we will sometimes drop the subscripts $X$ decorating vector bundles in this part, where it is understood that $X$ is the flag variety for $SL_3(\C)$. For instance, we will simply write the above cohomology groups as $\mH^i(\Omega^r\otimes \wedge^s T)$.

\paragraph{Cohomology of $~~\Omega_X \otimes T_X$.}
Consider the equivariant bundle $G\times_B (\n \otimes \lu)$ that corresponds to the  vector bundle $\Omega \otimes T$ over $X$. To illustrate the method developed in Seciton 2, we will compute its cohomology using the relative Lie algebra cohomology and the BGG resolution as proposed in Theorem \ref{Bott_rel_lie} and Proposition \ref{rel_lie_BGG}. 

Then for any dominant weight $\lambda$ 
the multiplicity of $L_\lambda$ in $\mH^\bullet(\Omega \otimes T)$ is given by the dimension of the relative Lie algebra cohomology $\mH^\bullet(\lb, \h, \op{Hom}(L_\lambda, \n \otimes \lu))$ which can be computed as 
the cohomology of the complex (\ref{F_BGG}) with $E= \n \otimes \lu$. 

The only dominant weights $\lambda$ such that $w \cdot \lambda$ appears as a weight of $\n \otimes \lu$ for some $w \in W$ are $\lambda =0$ and $\lambda = \rho$. Let us fix a basis of Chevalley generators $\{ e_1, e_2, e_{3} \}$ of $\lu$ and a dual basis $\{ f_1, f_2, f_{3} \}$ of $\n$ as in equation \eqref{eqn-Chevalley-basis-ef}. Then the nontrivial weight subspaces of $\n \otimes \lu$ 
with weights in the shifted-action orbit $\{w \cdot 0|w\in S_3\}$ are spanned by
\begin{itemize}
\item for $w=1$, $ \{  f_1 \otimes e_1 , f_2 \otimes e_2, f_{3} \otimes e_{3} \}$,
\item for $w =s_1$, $ \{ f_{3} \otimes e_2 \}$,
\item for $w =s_2$, $\{ f_{3} \otimes e_1 \}$.  
 \end{itemize}
Then  for  $\lambda =0$ the complex (\ref{F_BGG}) has the form 
\[ 0 \lra (\n \otimes \lu) [0] \stackrel{(f_1 , f_2)}{\longrightarrow}(\n \otimes \lu)[s_1 \cdot 0] \oplus (\n \otimes \lu) [s_2 \cdot 0] \lra 0 .\] 
An easy computation shows that the map $(f_1 , f_2)$ is surjective and its kernel is 1-dimensional. 
Therefore we have $\op{dim} ( \op{Hom}_G (\op{H}^0 (\Omega \otimes T) , L_0 ) ) =1$ and  
$\op{dim} (\op{Hom}_G (\op{H}^1 (\Omega \otimes T) , L_0 ) ) =0$. 

Now if $\lambda = \rho$, the only nontrivial weight subspaces of weights $w \cdot \rho$ for $w \in W$ are spanned by 
$  \{ f_{1} \otimes e_2 \} $ for $w=s_1$ and $ \{ f_{2} \otimes e_1 \}$ for $w= s_2$.  
Then the complex (\ref{F_BGG}) has nonvanishing terms concentrated in only one homological degree, namely
\[ 
0 \lra (\n \otimes \lu)[s_1 \cdot \rho] \oplus (\n \otimes \lu)[s_2 \cdot \rho]  \lra 0 .
\] 
Therefore, we obtain that $\op{dim}( \op{Hom}_G (L_\rho,\op{H}^1 (\Omega \otimes T)   ) ) =2$. 
Finally we conclude that the only nontrivial cohomology groups of $\Omega \otimes T$ over $X$ are 
$\op{H}^0(\Omega \otimes T) \cong L_0$ and $\op{H}^1(\Omega \otimes T) \cong L_\rho^{\oplus 2}$. 
The same results hold for the cohomology of $\Omega^2 \otimes \wedge^2 T \cong \Omega \otimes T$.  

\paragraph{Cohomology of $~~\Omega^2_X \otimes T_X$.} 
Next we consider the vector bundle $\Omega^2 \otimes T\cong G\times_B(\wedge^2 \n \otimes \lu)$. 
In a similar vein as above, one is reduced to computing the relative Lie algebra cohomology 
$\mH^\bullet(\lb, \h, \op{Hom}(L_\lambda,  \wedge^2 \n \otimes \lu))$ as 
the cohomology of the complex (\ref{F_BGG}) with $E=  \wedge^2 \n \otimes \lu$ and various integral dominant weights $\lambda$. 

The only dominant weight $\lambda$ such that $w \cdot \lambda$ is a weight of $\wedge^2 \n \otimes \lu$ is $\lambda =0$. 
The nontrivial weight spaces of weights $w \cdot 0$ for $w \in W$ are spanned by
\begin{itemize}
\item for $w=1$, $f_1 \wedge f_2 \otimes e_{3}$, 
\item for $w=s_1$, $\{ f_{1} \wedge f_2 \otimes e_2, f_1 \wedge f_{3} \otimes e_{3} \}$ ,
\item for $w=s_2$, $\{ f_{1} \wedge f_2 \otimes e_1, f_2 \wedge f_{3} \otimes e_{3} \}$ .
\end{itemize}  
  Then the complex (\ref{F_BGG}) 
 for $E = \wedge^2 \n \otimes \lu$ and $\lambda =0$ becomes 
 \[ 
 0 \lra (\wedge^2\n \otimes \lu)[0] \stackrel{(f_1, f_2)}{\longrightarrow}(\wedge^2\n \otimes \lu)[s_1 \cdot 0] \oplus (\wedge^2\n \otimes \lu) [s_2 \cdot 0]  \lra 0 .
 \] 
 An easy computation shows that the map $(f_1 , f_2)$ is injective and its image is one dimensional. 
Therefore, we have 
$$\op{dim}( \op{Hom}_G ( L_0,\op{H}^0 (\Omega^2 \otimes T)  ) ) =0,\quad  
\op{dim}( \op{Hom}_G (L_0,\op{H}^1 (\Omega^3 \otimes T)   ) ) =3.$$ Finally, the only nontrivial cohomology of $\Omega^2 \otimes T$ is $\op{H}^1(\Omega^2 \otimes T) \cong L_0^{\oplus 3}$.

\paragraph{An alternative approach.} 
For comparison, we include  
a more traditional way to compute the cohomology of these two bundles by using the BWB Theorem \ref{BWB} and the geometry of this particular case. 

The two lowest degree cohomology groups of $\Omega \otimes T$ has been covered in the proof of Corollary \ref{cor-deformation}. However, here we will use the BWB Theorem to show that all other cohomology groups of this sheaf vanish and simultaneously compute the first two non-zero terms.
 
The vector bundle $T=TX$ has the following filtration by line bundles
\begin{equation} 
0 \lra  \Lx_{\alpha_1} \oplus \Lx_{\alpha_2} \lra T \lra \Lx_{\alpha_1 + \alpha_2} \lra 0 ,  \label{T_filt} 
\end{equation}
where $\alpha_1$ and $\alpha_2$ are the simple roots of the $A_2$ root system.
The dual vector bundle $\Omega=\Omega_X$ has the dual filtration:
\begin{equation} 
0 \lra  \Lx_{-\alpha_1 - \alpha_2} \lra \Omega \lra  \Lx_{-\alpha_1} \oplus \Lx_{-\alpha_2} \lra 0 . \label{Om_filt} 
\end{equation}
Then the vector bundle $\Omega \otimes T$ has the following components in the filtration:
\[ 
\Qx_1 :=  \Lx_{-\alpha_1} \oplus \Lx_{-\alpha_2}  \subset \Qx_2 :=  \Qx_1 \oplus \Lx_0^{\oplus 3} \oplus \Lx_{\alpha_1 - \alpha_2} \oplus
\Lx_{\alpha_2 - \alpha_1}  \subset \Qx_3 :=  \Qx_2 \oplus \Lx_{\alpha_1} \oplus \Lx_{\alpha_2} . 
\]

Let us apply Theorem \ref{BWB}. Taking $\alpha_1 + \rho = 2\alpha_1 + \alpha_2$ and $\alpha_2 + \rho = 2 \alpha_2 + \alpha_1$, we observe that all weights of $\Qx_3/\Qx_2$ are singular, and $\Qx_3$ and $\Qx_2$ have isomorphic cohomology groups. Also,
all cohomology of degree higher than
one in the above filtration vanishes. Then we have the long exact sequence of cohomology
\begin{equation}\label{eqn-les-Omega-Tensor-T}
  \mH^0(\Qx_1) \to \mH^0(\Omega \otimes T) \to \mH^0(\Qx_2/\Qx_1) \to \mH^1(\Qx_1) \to \mH^1(\Omega \otimes T) \to \mH^1(\Qx_2/\Qx_1) \to 0. 
\end{equation}

Using Theorem \ref{BWB} for the given line bundles, we obtain $\mH^0(\Qx_1) =0$, $\mH^1(\Qx_1) \cong  L_0^{\oplus 2}$, $\mH^0(\Qx_2/\Qx_1) \cong L_0^{\oplus 3}$, $\mH^1(\Qx_2/\Qx_1) \cong L_\rho^{\oplus 2}$. Then the sequence \eqref{eqn-les-Omega-Tensor-T} becomes
\begin{equation}\label{eqn-les-Omega-Tensor-T-2}
0 \lra \mH^0(\Omega \otimes T) \lra L_0^{\oplus 3} \lra  L_0^{\oplus 2} \lra \mH^1(\Omega \otimes T) \lra  L_\rho^{\oplus 2} \lra 0 . 
\end{equation}

We also observe that $\Omega \otimes T \cong {\mathcal End}_{\Ox}(T)$ contains $\Ox$ as a direct summand. This corresponds to the fact that
$$\Omega \otimes T \cong {\mathcal End}_{\Ox}(T)\cong G\times_B(\n\otimes\lu),$$ 
and, as $B$-representations, $\n\otimes \lu \cong \mathrm{End}(\lu)$ contains $\C\cdot \mathrm{Id}_\lu$ as a direct summand. Now, Lemma~\ref{lemma-one-diml-section-End} applies and tells us that 
\begin{equation}\label{eqn-H-zero-Omega-Tensor-T}
\mH^0 (\Omega \otimes T) \cong L_0
\end{equation}

%

Then, plugging \eqref{eqn-H-zero-Omega-Tensor-T} into the long sequence \eqref{eqn-les-Omega-Tensor-T-2}, we have
\[  
 0 \lra L_0 \to L_0^{\oplus 3} \lra  L_0^{\oplus 2} \lra \mH^1(\Omega \otimes T) \lra  L_\rho^{\oplus 2} \lra 0 , 
\]
and therefore 
\begin{equation}\label{eqn-H-one-Omega-Tensor-T}
\mH^1(\Omega \otimes T) \cong  L_\rho^{\oplus 2}.
\end{equation}


\bigskip 
Now let us use Serre duality to compute the cohomology of  $\Omega^2 \otimes T$:
\begin{equation} \label{eqn-Serre-dual-Omega-Tensor-Omega}
\mH^i (\Omega^2 \otimes T) \cong \mH^{3-i}(\Omega \otimes \wedge^2 T \otimes \Omega^3)^* \cong \mH^{3-i}(\Omega \otimes \Omega)^* , 
\end{equation}
where we have used the isomorphism $\wedge^2 T \otimes \Omega^3 \cong \Omega$. We have $\Omega \otimes \Omega
\cong S^2(\Omega) \oplus  \Omega^2$, and the cohomology $\mH^i(\Omega^2)$ is known by Theorem~\ref{thm-flag-cohomology}. For $S^2 \Omega$, we will use Serre duality again:
\begin{equation} \label{eqn-Serre-dual-Sym2-Omega}
\mH^{3-i}(S^2\Omega)^* \cong \mH^i(S^2 T \otimes \Omega^3) .
\end{equation}
Taking the symmetric product of the filtration (\ref{T_filt}), and using the fact that the canonical bundle $\Omega^3\cong \Lx_{-2\alpha_1-2\alpha_2}$, we obtain the filtration of $S^2 T \otimes \Omega^3$ by vector subbundles:
\[
\mathcal{R}_1:=\Lx_{-2 \alpha_2} \oplus \Lx_{-2\alpha_1} \oplus \Lx_{-\alpha_1 -\alpha_2} \subset \mathcal{R}_2:=\mathcal{R}_1\oplus \Lx_{-\alpha_1} \oplus \Lx_{-\alpha_2} \subset \mathcal{R}_3:= \mathcal{R}_2\oplus \Lx_0 .
\]
Analyzing the weights by Theorem \ref{BWB}, we notice that the subbundle $\mathcal{R}_1$ contributes nothing to cohomology, and all cohomology of $S^2 T \otimes \Omega^3$ in degrees greater than 1 
vanishes.

By Theorem \ref{BWB}, $\mH^0(\mathcal{R}_2) =0$, $\mH^1(\mathcal{R}_2) \cong L_0^{\oplus 2}$, $\mH^0(\Lx_0) \cong L_0$, and $\mH^1(\Lx_0) =0$. Then we have
\begin{equation}  
0 \lra \mathcal{R}_2 \lra S^2 T \otimes \Omega^3  \lra \Lx_0 \lra 0 ,   \label{S^2TK} 
\end{equation}
which induces the sequence of cohomology groups
\[ 
0 \lra \mH^0(S^2 T \otimes \Omega^3) \lra \mH^0(\Lx_0) \lra \mH^1(\mathcal{R}_2) \lra \mH^1(S^2 T \otimes \Omega^3) \lra 0 .
\]
Equivalently, we have
\begin{equation}\label{eqn-les-S2-Omega-tensor-K} 
0 \lra \mH^0(S^2 T \otimes \Omega^3) \lra L_0 \lra  L_0^{\oplus 2} \lra \mH^1(S^2 T \otimes \Omega^3) \lra 0 .
\end{equation}

We claim that $\mH^0(S^2 T \otimes \Omega^3) \cong 0$. Otherwise, it would be isomorphic to $L_0$ by \eqref{eqn-les-S2-Omega-tensor-K}, which in turn means that the bundle has a $G$-equivariant global section splitting the projection map $S^2 T \otimes \Omega^3  \lra \Lx_0$. However, observe that the sequence (\ref{S^2TK}) is non-split. Indeed, the existence of a nonzero map $\Lx_0 \lra S^2T \otimes \Omega^3$ would imply the existence of a map on the level of the corresponding $B$-modules, but the filtration for $S^2T \otimes \Omega^3$
shows that it corresponds to a cyclic $B$-module generated by a single highest-weight-$0$ vector. Therefore, we conclude that $\mH^0(S^2 T \otimes \Omega^3) =0$, and then the cohomology sequence \eqref{eqn-les-S2-Omega-tensor-K} implies that $\mH^1(S^2 T \otimes \Omega^3) \cong L_0$.

It follows from this discussion that we have
\begin{align}
\mH^0(\Omega^2 \otimes T) & \cong \mH^3(\Omega \otimes \Omega)^* \cong \mH^3(S^2 \Omega)^* \oplus \mH^3(\Omega^2)^* \nonumber \\
& \cong
\mH^0(S^2 T \otimes \Omega^3) \oplus \mH^3(\Omega^2)^* \cong 0 \oplus 0 =0. \label{eqn-H0-Omega2-Tensor-T}
\end{align}
and
\begin{align} 
\mH^1(\Omega^2 \otimes T) & \cong \mH^2(\Omega \otimes \Omega)^* \cong \mH^2(S^2 \Omega)^* \oplus \mH^2(\Omega^2)^* \nonumber \\
& \cong
\mH^1(S^2 T \otimes \Omega^3) \oplus \mH^2(\Omega^2)^* \nonumber \\
& \cong L_0  \oplus  L_0^{\oplus 2} =   L_0^{\oplus 3} . \label{eqn-H1-Omega2-Tensor-T}
\end{align}

\paragraph{List of cohomology over $X$.}
For later use, we collect in a single table all results on cohomology groups of various sheaves over the three-dimensional flag variety $X=SL_3(\C)/B$.

\begin{equation}\label{table-1}
\begin{gathered}
\begin{array}{|c|c|c|c|c|}\hline
                              & \mH^0   & \mH^1    & \mH^2   & \mH^3    \\ \hline
 \Ox                          &  L_0    &  0       &   0     &  0       \\ \hline
 \Omega                       &  0      &  L_0 ^{\oplus 2}    &   0     &  0       \\ \hline
 \Omega^2                     &  0      &  0       &  L_0^{\oplus 2}   &  0       \\ \hline
 \Omega^3                     &  0      &  0       &   0     &  L_0     \\ \hline
 T                            &  L_\rho &  0       &   0     &  0       \\ \hline
 \Omega \otimes T             & L_0     &  L_\rho^{\oplus 2} &   0     &  0       \\ \hline
 \Omega^2 \otimes \wedge^2 T  & L_0     &  L_\rho^{\oplus 2} &   0     &  0       \\ \hline
 \Omega^2 \otimes T           &  0      &   L_0^{\oplus 3}   &   0     &  0       \\ \hline
 \end{array}
 \end{gathered} 
 \end{equation}

%
%

\subsection{The \texorpdfstring{$\mathfrak{sl}_3$}{sl(3)}-center computation} \label{sec-sl3}
In this subsection, we will use Theorem \ref{Hoh},
\[ 
\op{HH}^0 (\ul_0 ) \cong \bigoplus_{i+j+k=0} \mH^i(\Nt,  \wedge^j T\Nt)^k ,
\]
to describe the center of the principal block $\ul_0$ of the small quantum group for $\g= \s_3$.
Since $\Nt$ is six dimensional,  the index $j$ changes from $0$ to $6$.

Following the computation of the $\lb$-module structure corresponding to $\pr_*(\wedge^{\star}T\Nt)$ given in Section \ref{sec-Springer}, we could have used Theorem \ref{Bott_rel_lie} to compute their cohomology.  
In fact, in many cases we will manage to obtain the results by simpler explicit arguments presented below. However, the three crucial cases ($j=2,3,4$) benefit from the application of the relative Lie cohomology and the BGG complex.

We will use the short exact sequence of sheaves on $\Nt$, induced by the projection along the fiber $\op{pr}:\Nt\lra X:=G/B$:
\begin{equation}\label{vert_hor}
 0 \to T_{vert} \cong  \op{pr}^* \Omega_{X} \to T\Nt \to T_{hor} \cong \op{pr}^* T_{X} \to 0 , 
 \end{equation}
where $T_{vert}$ consists of vectors tangent to the (vertical) fiber direction and
$T_{hor}$ projects onto the tangent bundle of the base (c.~f.~the sequence (\ref{eqn-ses-two-on-N})). The $k$-degrees of the components are
$\op{deg}_k(\Omega_{X}) =-2$, $\op{deg}_k(T_{X}) = 0.$

\paragraph{Case $j=0$.} Then $i=k=0$ and we have
\[ 
\op{HH}^0(\ul_0)_{j=0} \cong \mH^0(\Nt, \Ox_\Nt)^0 \cong \mH^0(X, \Ox_X) \cong L_0. 
\]

\paragraph{Case $j=1$.}  We have to compute $\op{HH}^0(\ul_0)_{j=1} \cong \oplus_{i+k=-1} \mH^i(\Nt, T\Nt)^k$.
The short exact sequence \eqref{vert_hor} shows that the only possible value of $k$ in this case is $k=-2$
and it corresponds to the subsheaf $\op{pr}^* \Omega_{X} \subset T\Nt$. Then $i=1$ and we have
\[ 
\op{HH}^0(\ul_0)_{j=1} \cong \mH^1(\Nt, T\Nt)^{-2} \cong \mH^1(X, \Omega_X) \cong  L_0^{\oplus 2} .
\]

\paragraph{Case $j=2$.} We have to find $\op{HH}^0(\ul_0)_{j=2} \cong \oplus_{i+k=-2} \mH^i(\Nt,  \wedge^2 T\Nt)^k$.
The admissible values of $k$ are $k=-2$ and $k=-4$. If $k=-4$, then $i=2$ and the sheaf is the exterior
square of $T_{vert}$:
\[ 
\mH^2(\Nt, \wedge^2 T\Nt)^{-4} \cong \mH^2(X, \Omega_X^2) \cong  L_0^{\oplus 2} .
\]
When $k =-2$, $i=0$, in order to find 
$$\mH^0(\Nt,\wedge^2T\Nt)^{-2}\cong \mH^0(X, \mathrm{pr}_*(\wedge^2 T\Nt)^{-2} ),$$ 
we will compute the cohomology of $\mathcal{F}_1:=\mathrm{pr}_*(\wedge^2 T\Nt)^{-2} $ on $X$. In this case, two subquotients of $\mathcal{F}_1$ can contribute to the cohomology: $\Omega_{X}^2 \otimes T_X$ and
$\Omega_{X} \otimes T_{X}$. Indeed we have the following sequence of vector bundles over $X$:
\begin{equation}\label{eqn-F1-sheaf-sequence}
0 \lra  \Omega^2 \otimes T \lra \F_1 \lra \Omega \otimes T \lra 0 .
\end{equation} 
This induces a long exact sequence of cohomology groups over $X$:
\[  0 \to \mH^0(\Omega^2 \otimes T) \to \mH^0(\F_1) \to \mH^0(\Omega \otimes T) \to \mH^1(\Omega^2 \otimes T) \to
\mH^1(\F_1) \to \mH^1(\Omega \otimes T) \to  0 .\]
Plugging in the terms from Table \eqref{table-1}, we are left with the exact sequence
\begin{equation}\label{eqn-les-F1}
 0 \lra \mH^0(\F_1) \lra L_0 \lra  L_0^{\oplus 3} \lra \mH^1(\F_1) \lra  L_\rho^{\oplus 2} \lra 0 , 
\end{equation}
which tells us that $\mH^0(\F_1)$ is either zero or isomorphic to $L_0$ as a $G$-module. 

To determine which case it is, we need a more careful study of the equivariant sheaf $\F_1$. Recall from Corollary \ref{cor-exterior-product}, and in particular formula \eqref{eqn-V2-minus-2}, that 
\[
\F_1\cong G\times_B V_2^{-2}, 
\]
where $V_2^{-2}$ is the $B$-module (in fact a $B\times \C^*$-module, where the superscript $-2$ indicates the module has $\C^*$-weight or degree equal to $-2$)
\[
 V_2^{-2}=\dfrac{\g\otimes \n \oplus \lu \otimes (\n\wedge \n)}{\Delta_2(\lb\otimes \n)}. 
\]
Here the map $\Delta_2$ is given as the composition
\[ 
\Delta_2: \lb \otimes \n \xrightarrow{(\iota,\ad_\n)\otimes \mathrm{Id}_{\n}} \g\otimes \n \oplus \lu \otimes \n\otimes \n \stackrel{\beta}{\lra} \g\otimes \n \oplus \lu \otimes (\n\wedge \n)  .
\] 
The sequence \eqref{eqn-F1-sheaf-sequence} comes from the corresponding short exact sequence of $B$-modules
\begin{equation}\label{eqn-F1-B-module-sequence}
0 \longrightarrow \lu\otimes \n \wedge \n \lra V_2^{-2}\lra \dfrac{\g\otimes \n}{\lb\otimes \n}\cong \lu \otimes \n \lra 0.
\end{equation}
The cohomology $\mH^0(X,\Omega\otimes T)\cong \mH^0(X,G\times_B(\lu\otimes \n))\cong L_0$ comes from the splitting of the $B$-module
\[
\lu \otimes \n \cong \mathrm{End}(\n)\cong \mathbb{C} \mathrm{Id}_{\n} \oplus \mathfrak{sl}(\n), 
\]
where $\mathfrak{sl}(\n)$ stands for the space of traceless endomorphisms of $\n$. The subspace $\C\mathrm{Id}_\n$ spans a trivial $B$-submodule, and upon differentiation, a trivial $\lb$-submodule. To determine whether $\mH^0(\F_1)$ is nonvanishing or not, we need to find out whether the split inclusion 
$\C \mathrm{Id}_\n \subset \lu\otimes \n$
lifts to a trivial $B$-summand in $V_2^{-2}$:
\[
\begin{gathered}
\xymatrix{
 & \C\mathrm{Id}_\n\ar@{^{(}->}[d]^{\oplus} \ar@{-->}_?[dl]\\
V_2^{-2} \ar[r]& \lu \otimes \n 
}
\end{gathered} .
\]

The map $\Delta_2$ can now be easily computed on the tensor product basis of $\lb\otimes \n$ chosen as in equations \eqref{eqn-Chevalley-basis-ef} and \eqref{eqn-Chevalley-basis-h}. For instance
\begin{equation}\label{eqn-delta2-h1-f1}
\Delta_2(
h_1\otimes f_1 
)  =  h_1\otimes f_1 + \op{ad}_{\n}(h_1)\wedge f_1= h_1\otimes f_1-e_2\otimes f_1\wedge f_2+e_3\otimes f_1\wedge f_3  ,
\end{equation}
\begin{equation}\label{eqn-delta2-h2-f2}
\Delta_2(
h_2\otimes f_2 
)  =  h_2\otimes f_2 + \op{ad}_{\n}(h_2)\wedge f_2= h_2\otimes f_2+e_1\otimes f_1\wedge f_2-e_3\otimes f_3\wedge f_2 .
\end{equation}

\begin{lemma}\label{lemma-splitting-V2-minus-2}The surjective composition map
\[
V_2^{-2}=\dfrac{\g \otimes \n\oplus \lu\otimes \n\wedge \n}{\Delta_2(\lb\otimes \n)}\lra (\g/\lb)\otimes \n \cong \mathrm{End}(\n)\lra \C\mathrm{Id}_\n
\]
splits as a map of $\lb$-modules, and the splitting summand is spanned by the element 
\[
z := e_1\otimes f_1 + e_2\otimes f_2 + e_3\otimes f_3 -e_3\otimes f_1\wedge f_2 
\]
modulo $\Delta_2(\lb\otimes \n)$.
\end{lemma}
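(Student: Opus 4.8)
The plan is to verify directly that the explicit element $z \in \g\otimes\n \oplus \lu\otimes(\n\wedge\n)$ descends to a well-defined $\lb$-invariant vector in the quotient $V_2^{-2}$ that maps to $\mathrm{Id}_\n$ under the projection, and then observe that a single $\lb$-invariant lift automatically produces the desired splitting. First I would record that, modulo $\Delta_2(\lb\otimes\n)$, the image of $z$ in $(\g/\lb)\otimes\n \cong \mathrm{End}(\n)$ is $e_1\otimes f_1 + e_2\otimes f_2 + e_3\otimes f_3$, which under the identification $\lu\cong\n^*$ (using $\langle e_i,f_j\rangle=\delta_{ij}$) is exactly $\mathrm{Id}_\n$, whence $z$ maps to $\mathrm{Id}_\n$ and then to the generator of $\C\,\mathrm{Id}_\n$ under the trace-free decomposition. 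Since $\C\,\mathrm{Id}_\n$ is a trivial $\lb$-submodule of $\mathrm{End}(\n)$, it then suffices to show that the class $\overline z \in V_2^{-2}$ is killed by $\lb$, i.e. that $x\cdot z \in \Delta_2(\lb\otimes\n)$ for every $x\in\lb$; the assignment $1\mapsto\overline z$ then gives the $\lb$-equivariant section.

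The computation of $\lb\cdot z$ I would organize by checking it on the Chevalley generators $h_1,h_2,f_1,f_2,f_3$ of $\lb$, using the action on each tensor factor: on $\g\otimes\n$ via $\ad\otimes\mathrm{Id}+\mathrm{Id}\otimes\ad_\n$, and on $\lu\otimes(\n\wedge\n)$ via $\ad_\lu\otimes\mathrm{Id}+\mathrm{Id}\otimes(\text{wedge action of }\ad_\n)$, where $\ad_\n$ on the basis of $\lb$ is given explicitly in \eqref{phi}. For the torus generators $h_1,h_2$ one expects $h_k\cdot z = 0$ already on the nose, since $z$ is visibly a sum of weight-zero terms (each $e_i\otimes f_i$ has weight $0$, and $e_3\otimes f_1\wedge f_3$... one should double check — actually $e_3\otimes f_1\wedge f_2$ pairs the top root with the top root, also weight $0$); this is a quick sanity check. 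The content is in the three generators $f_1,f_2,f_3$. For each, one computes $f_i\cdot z$ as an explicit element of $\g\otimes\n \oplus \lu\otimes(\n\wedge\n)$ and then matches it against an explicit $\C$-combination of the vectors $\Delta_2(h_j\otimes f_k)$, $\Delta_2(f_j\otimes f_k)$; formulas of the type \eqref{eqn-delta2-h1-f1}--\eqref{eqn-delta2-h2-f2} are exactly what is needed, and a few more such evaluations of $\Delta_2$ on the basis of $\lb\otimes\n$ will have to be tabulated. The $\mathfrak{sl}_3$-Serre/structure relations among the $f_i$ (e.g. $f_3=[f_1,f_2]$, $[f_1,f_3]=[f_2,f_3]=0$) will be used to simplify both sides.

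The only subtlety — and the step I expect to be the main obstacle — is the bookkeeping of signs and of the anti-symmetrization map $\beta:\n\otimes\n\to\n\wedge\n$ hidden inside $\Delta_2$, since $V_2^{-2}$ lives in an exterior power and the relations $\Delta_2(\lb\otimes\n)$ are themselves wedge-type expressions; it is easy to be off by a sign on a term like $e_3\otimes f_1\wedge f_3$ versus $e_3\otimes f_3\wedge f_1$. To control this I would fix once and for all an ordered basis $f_1<f_2<f_3$ of $\n$, always write $\n\wedge\n$-elements in increasing order, and carry the induced conventions through every evaluation of $\Delta_2$. Once $f_1\cdot z, f_2\cdot z, f_3\cdot z$ are each exhibited inside $\Delta_2(\lb\otimes\n)$, the lemma follows: $\overline z$ spans a trivial $\lb$-submodule of $V_2^{-2}$ projecting isomorphically onto $\C\,\mathrm{Id}_\n$, giving the claimed $\lb$-equivariant splitting.
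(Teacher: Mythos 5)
Your proposal is correct and follows essentially the same route as the paper: check that $z$ maps to $\mathrm{Id}_\n$, note that $z$ has weight zero so $\h$ acts trivially, and then verify by direct computation that the lowering operators kill $\overline z$ by matching $f_i\cdot z$ against the tabulated elements $\Delta_2(h_1\otimes f_1)$ and $\Delta_2(h_2\otimes f_2)$. The only (harmless) difference is that the paper checks just $f_1$ and $f_2$ and invokes $f_3=[f_1,f_2]$ to skip the third generator, and in fact the two displayed $\Delta_2$-relations already suffice, so no further tabulation is needed.
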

\begin{proof}
It is clear that $z$ maps to the element $\mathrm{Id}_\n$ under the composition map, since by our normalization, $\langle e_i, f_j \rangle = \delta_{i,j}$ ($i,j =1,2,3$). It suffices to show that $z$ is annihilated by all elements of $\lb$. 

To prove this, notice that the weight of $z$ is zero. Thus $\h$ acts trivially on $z$. We are reduced to showing that $f_1$ and $f_2$ both kill $z$, and the result will follow since $f_3=[f_1,f_2]$.

We compute
\begin{eqnarray*}
f_1\cdot z &\!\! =\!\! & [f_1,e_1]\otimes f_1+e_1\otimes [f_1,f_1]+ [f_1,e_2]\otimes f_2 + e_2\otimes [f_1,f_2] +[f_1,e_3]\otimes f_3 +e_3\otimes [f_1,f_3] \\
& & - \left([f_1,e_3]\otimes f_1\wedge f_2 + e_3\otimes [f_1,f_1]\wedge f_2 + e_3\otimes f_1\wedge [f_1,f_2]\right) \\
& \!\! =\!\! & -h_1\otimes f_1+e_2\otimes f_1\wedge f_2 - e_3\otimes f_1\wedge f_3.
\end{eqnarray*}
It follows from equation \eqref{eqn-delta2-h1-f1} that
\[
f_1\cdot z = -\Delta_2 (h_1\otimes f_1),
\]
so that $f_1\cdot z \equiv 0~(\mathrm{mod}~\Delta_2(\lb\otimes \n))$. Likewise, we have
\begin{eqnarray*}
f_2\cdot z
& \!\! =\!\! & -h_2\otimes f_2-e_1\otimes f_1\wedge f_2 + e_3\otimes f_3\wedge f_2,
\end{eqnarray*}
which equals $-\Delta_2 (h_2\otimes f_2)$ by equation \eqref{eqn-delta2-h2-f2} and thus becomes zero in $V_2^{-2}$. The result now follows.
\end{proof}

Lemma \ref{lemma-splitting-V2-minus-2} tells us that the bundle $ \F_1\cong G\times_B V_2^{-2}$ does contain a $G$-equivariant trivial summand $\mathcal{O}_X$, and should thus have its space of global sections at least one dimensional. Combined with the sequence \eqref{eqn-les-F1}, we have the equality
\[ 
 \mH^0(\Nt, \wedge^2T\Nt)^{-2} \cong \mH^0(\F_1)   \cong L_0 . 
\]   

\begin{remark} The computation of $ \mH^0(\mathcal{F}_1)$ given above shows the explicit geometric and algebraic structures that contribute to the nontrivial cohomology. A more direct geometric meaning of the spanning global section in the general case will be given in Lemma \ref{lemma-Poisson}. Alternatively, we can compute this cohomology using the BGG resolution approach as described in  
Theorem \ref{Bott_rel_lie} and Proposition \ref{rel_lie_BGG}.  Since we need to compute only the zeroth cohomology, 
the first step is to find which dominant weights appear in $V_2^{-2}$.  We immediately see that the only dominant weight occurring in $V_2^{-2}$ is zero, 
and  that the subspace $V_2^{-2}[0]$ is spanned by the elements $\{e_1 \otimes f_1, e_2 \otimes f_2, e_3 \otimes f_3, e_3 \otimes f_1 \wedge f_2 \}$. 
Then we need to compute the zeroth cohomology of the complex  (\ref{F_BGG})  for $E = V_2^{-2}$ and $\lambda =0$: 
\[ 
0 \lra V_2^{-2}[0] \stackrel{(f_1, f_2)}{\longrightarrow}  V_2^{-2}[s_1 \cdot 0] \oplus V_2^{-2}[s_2 \cdot 0] \lra \cdots 
\] 
The dimension of the zeroth cohomology group of this complex is equal to the dimension of the intersection of the kernels of $f_1$ and $f_2$ acting on $V_2^{-2}[0]$. This is the  dimension of the subspace 
$S \subset (\lu \otimes \n)[0] \oplus(\lu \otimes \wedge^2 \n)[0]$ such that $f_1(S) \subset \Delta_2(\lb \otimes \n)$ 
and $f_2(S) \subset \Delta_2(\lb \otimes \n)$. Using the structure of the submodule $\Delta_2(\lb \otimes \n)$ 
(equations (\ref{eqn-delta2-h1-f1}) and (\ref{eqn-delta2-h2-f2}) suffice for our purposes), we compute that $\op{dim}(S)=1$, and $S$ is spanned by the cocycle $z$ in Lemma \ref{lemma-splitting-V2-minus-2}. 
\end{remark}

\paragraph{Case $j=3$.} We have to find $\op{HH}^0(\ul_0)_{j=3} \cong \oplus_{i+k=-3} \mH^i(\Nt,  \wedge^3 T\Nt)^k$.
The admissible values of $k$ are $k=-4$ and $k=-6$. 

If $k=-6$, then $i=3$. We have $\mathrm{pr}_*(\wedge^3 T\Nt)^{-6}\cong \Omega_X^3$, so that
\[
\mH^3(\Nt, \wedge^3 T\Nt)^{-6} \cong \mH^3(X, \Omega_X^3) \cong L_0 .
\]

When $k =-4$ and $i=1$,  let us consider the bundle $\mathcal{F}_2:=\mathrm{pr}_*(\wedge^3T\Nt)^{-4}$. Two subquotient bundles can contribute to the cohomology, namely $\Omega_{X}^3 \otimes T_X$ and
$\Omega^2_{X} \otimes T_{X}$, and they fit into the sequence
\[  
0 \lra \Omega^3 \otimes T \lra \F_2 \lra \Omega^2 \otimes T \lra 0 .
\]
Since $\Omega^3$ is the canonical bundle of $X$, using the isomorphism $\Omega^3 \otimes T \cong \Omega^2$,  we obtain
\[ 
0 \lra \Omega^2 \lra \F_2 \lra \Omega^2 \otimes T \lra  0 . 
\]
This induces the following long exact sequence in cohomology, part of which looks like
\begin{equation}\label{eqn-F2-les}
0 \lra \mH^1(\F_2) \lra \mH^1(\Omega^2 \otimes T) \lra \mH^2 (\Omega^2) \lra  \mH^2(\F_2) \lra \mH^2(\Omega^2 \otimes T) \lra \cdots .
\end{equation}
We are interested only in the term $\mH^1(\F_2)$.  Using the known cohomology groups of $\Omega^2$ and
$\Omega^2 \otimes T$ from Table \ref{table-1}, we have
\begin{equation} \label{coh_F2} 
0 \lra \mH^1(\F_2) \lra L_0^{\oplus 3} \lra  L_0^{\oplus 2} \lra \mH^2(\F_2) \lra 0 .
\end{equation} 
This shows that $\mH^1(\F_2)$ is  at least one dimensional and may be isomorphic to $m$ copies of the trivial $G$-module, with $1 \leq m \leq 3$. To compute the multiplicity $m$, we will use Theorem \ref{Bott_rel_lie} and Proposition \ref{rel_lie_BGG}.

By Corollary \ref{cor-exterior-product} and formula \eqref{eqn-V3-minus-4}, we know that the $\lb$-module structure corresponding to the sheaf $\op{pr}_*(\wedge^3 T(\Nt))^{-4}$ is given by 
\[ 
V_3^{-4}=\dfrac{\g\otimes (\n \wedge \n)\oplus \lu \otimes (\n\wedge \n\wedge \n)}{\Delta_3(\lb\otimes (\n\wedge \n))},
\] 
where 
\[ \Delta_3 : \lb \otimes (\n\wedge \n) \xrightarrow{(\iota,\ad_\n)\otimes \mathrm{Id}_{\n\wedge\n}} \g\otimes (\n\wedge \n) \oplus \lu \otimes \n\otimes (\n\wedge\n) \stackrel{\beta}{\lra} \g\otimes (\n\wedge \n) \oplus \lu \otimes (\n\wedge \n \wedge \n). 
   \]  
Then Theorem \ref{Bott_rel_lie} states that  
\[ \op{Hom}_G(L_0, \mH^1(X, \op{pr}_*(\wedge^3T\Nt)^{-4}) \cong \op{H}^1(\lb, \h, V_3^{-4}) . \]

To compute this relative Lie algebra cohomology, we need to find the dominant  weights $\lambda$ such that  $s_1 \cdot \lambda$ or $s_2 \cdot \lambda$ is a weight  in $V_3^{-4}$.   
The only weight with this property is $\lambda =0$. The weight subspaces of weights $w \cdot 0$ for $w = \{ 1, s_1, s_2 \}$  
are spanned by the following vectors (modulo  $\Delta_3(\lb \otimes (\n \wedge \n))$: 
\begin{itemize}
\item for $w=1$, $ \{ e_{3} \otimes f_1 \wedge f_2 \}$, 
\item for $w=s_1$, $\{ e_2 \otimes f_1 \wedge f_2, e_{3}  \otimes f_1 \wedge f_3 \}$,
\item for $w=s_2$, $\{  e_1 \otimes f_1 \wedge f_2,  e_{3} \otimes f_2 \wedge f_3 \}$. 
\end{itemize}  

 Then the complex (\ref{F_BGG}) 
 for $E = V_3^{-4}$ and $\lambda =0$ becomes 
 \begin{equation} \label{BGG-V_34}
 0 \lra V_3^{-4}[0] \stackrel{d_1^*}{\lra}V_3^{-4}[s_1 \cdot 0] \oplus V_3^{-4} [s_2 \cdot 0]   
 \stackrel{d_2^*}{\lra} V_3^{-4}[s_2 s_1 \cdot 0] \oplus V_3^{-4} [s_1 s_2 \cdot 0]  \lra \cdots  \ .
 \end{equation} 
 
 \begin{lemma} The first cohomology group of the complex (\ref{BGG-V_34}) is three dimensional. 
 \end{lemma}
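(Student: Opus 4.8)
The plan is to evaluate $\op{H}^1$ of the complex \eqref{BGG-V_34} as a pure linear-algebra computation, once the terms and the two differentials have been pinned down. First I would record the dimensions of all the weight spaces occurring in \eqref{BGG-V_34}. The spaces $V_3^{-4}[0]$, $V_3^{-4}[s_1\cdot 0]$, $V_3^{-4}[s_2\cdot 0]$ have already been identified above (dimensions $1,2,2$). For the degree-two terms I would write down the weight spaces of the numerator $\g\otimes(\n\wedge\n)\oplus\lu\otimes(\n\wedge\n\wedge\n)$ of \eqref{eqn-V3-minus-4} at the weights $s_2s_1\cdot 0=-\alpha_1-2\alpha_2$, $s_1s_2\cdot 0=-2\alpha_1-\alpha_2$ and $w_0\cdot 0=-2\alpha_1-2\alpha_2$; using the weights of the Chevalley basis these are $4$-, $4$- and $3$-dimensional. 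Since the first component of $\Delta_3$ is the inclusion $\lb\hookrightarrow\g$ tensored with the identity, $\Delta_3$ is injective, so I can read off $\dim\Delta_3(\lb\otimes(\n\wedge\n))$ in each of these weights by computing $\Delta_3$ on the relevant basis vectors of $\lb\otimes(\n\wedge\n)$ via \eqref{phi} and \eqref{eqn-Delta-3}; the images come out $3$-, $3$- and $3$-dimensional. Hence $V_3^{-4}[s_2s_1\cdot 0]$ and $V_3^{-4}[s_1s_2\cdot 0]$ are one-dimensional, $V_3^{-4}[w_0\cdot 0]=0$, and \eqref{BGG-V_34} reads $0\to V_3^{-4}[0]\to V_3^{-4}[s_1\cdot0]\oplus V_3^{-4}[s_2\cdot0]\to V_3^{-4}[s_2s_1\cdot0]\oplus V_3^{-4}[s_1s_2\cdot0]\to 0$ with dimensions $1\to 4\to 2\to 0$, so that $\op{H}^1=(4-\op{rank}\,d_2^*)-\op{rank}\,d_1^*$.

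The next step is to show $\op{rank}\,d_1^*=1$ and $\op{rank}\,d_2^*=0$. For $d_1^*=(f_1,f_2)$ it suffices to note that $f_1\cdot(e_3\otimes f_1\wedge f_2)=-\,e_2\otimes f_1\wedge f_2+e_3\otimes f_1\wedge f_3$ is nonzero in $V_3^{-4}[s_1\cdot 0]$; equivalently, $\ker d_1^*=\op{H}^0(\lb,\h,V_3^{-4})\cong\op{Hom}_G(L_0,\mH^0(X,\op{pr}_*(\wedge^3T\Nt)^{-4}))$ vanishes by Lemma \ref{H0_H1_rel_lie}, Theorem \ref{Bott_rel_lie} and the fact that $\mH^0(\F_2)=0$, which follows from $0\to\Omega^2\to\F_2\to\Omega^2\otimes T\to 0$ and the table \eqref{table-1}. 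For $d_2^*=0$: by the diagram automorphism of $\mathfrak{sl}_3$, which exchanges $\alpha_1\leftrightarrow\alpha_2$ and hence the $s_1$- and $s_2$-labelled summands of \eqref{BGG-V_34}, it is enough to verify that $d_2^*$ annihilates $V_3^{-4}[s_1\cdot 0]$, spanned by $\overline{e_2\otimes f_1\wedge f_2}$ and $\overline{e_3\otimes f_1\wedge f_3}$. For each of these two vectors I would apply the two components of $d_2^*$ coming from \eqref{eqn-BGG-sl3} (the operator $f_2^2$ into $V_3^{-4}[s_2s_1\cdot 0]$ and the degree-two operator into $V_3^{-4}[s_1s_2\cdot 0]$), expand by repeated use of the $\mathfrak{sl}_3$ bracket relations, and then reduce modulo $\Delta_3(\lb\otimes(\n\wedge\n))$ using the relations obtained in the first step; all four answers vanish. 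Therefore $\op{rank}\,d_2^*=0$, and $\op{H}^1=(4-0)-1=3$.

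I expect the last verification, $d_2^*=0$, to be the main obstacle: it requires careful bookkeeping of the $U(\n)$-action on the degree $-4$ weight spaces of $V_3^{-4}$ and of the reductions modulo the rather involved submodule $\Delta_3(\lb\otimes(\n\wedge\n))$, and one must be attentive to the sign and ordering conventions when passing from the BGG resolution to its dual complex \eqref{F_BGG}, \eqref{eqn-BGG-sl3} (the two components of the degree-two differential differ precisely by reversing the order of a product, so a naive reading gives a spurious nonzero value). Invoking the $\alpha_1\leftrightarrow\alpha_2$ symmetry halves this computation. As a consistency check, $\op{H}^1=3$ is exactly equivalent to the vanishing of the connecting map $\mH^1(\Omega^2\otimes T)\to\mH^2(\Omega^2)$ in \eqref{coh_F2}, which forces $\mH^1(\F_2)\cong L_0^{\oplus 3}$ and $\mH^2(\F_2)\cong L_0^{\oplus 2}$.
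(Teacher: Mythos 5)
Your proposal is correct and follows essentially the same route as the paper's proof: compute the relevant weight spaces of $V_3^{-4}$ and check, working modulo $\Delta_3(\lb\otimes(\n\wedge\n))$, that $d_1^*$ has rank one while $d_2^*$ vanishes on the entire middle term, so that $\op{H}^1 = 4-1 = 3$. The differences are cosmetic: the paper verifies all eight evaluations of $d_2^*$ directly instead of invoking the $\alpha_1\leftrightarrow\alpha_2$ diagram symmetry, and it does not need your (correct) dimension counts $\dim V_3^{-4}[s_2s_1\cdot 0]=\dim V_3^{-4}[s_1s_2\cdot 0]=1$ and $V_3^{-4}[w_0\cdot 0]=0$, since it only ever uses that the images land in the submodule.
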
 
 \begin{proof} 
The maps $d_1^*$ and $d_2^*$  in the complex (\ref{BGG-V_34}) are given by the diagram \eqref{eqn-BGG-sl3}. In particular, $d_1^* = f_1 \oplus f_2$, and 
\[ 
\left. d_2^*\right|_{V_3^{-4}[s_1 \cdot 0]}  = f_2^2 \oplus (- 2 f_1 f_2 + f_2 f_1) , \quad  \quad \quad 
\left. d_2^*\right|_{V_3^{-4} [s_2 \cdot 0]} = f_1^2 \oplus (-2 f_2f_1 + f_1 f_2 ).
\]  

It is easy to see that the image of the map $d_1^*$ acting on $V_3^{-4}[0]$ is one dimensional. Indeed, we have  
\[ f_1(e_3 \otimes f_1 \wedge f_2) = -e_2 \otimes f_1 \wedge f_2 + e_3 \otimes f_1 \wedge f_3 , \] 
\[ f_2( e_3 \otimes f_1 \wedge f_2) = e_1 \otimes f_1 \wedge f_2 - e_3 \otimes f_3 \wedge f_2  . \] 
These elements are not in the submodule $\Delta_3(\lb \otimes (\n \wedge \n))$ because by construction 
this submodule   does not intersect the subspace 
$\lu \otimes  \n \wedge \n$. 

To find the kernel of $d_2^*$, we compute  
\[ \begin{array}{l}  
f_1^2 (e_1 \otimes f_1 \wedge f_2) = -2f_1 \otimes f_1 \wedge f_2 - 2 h_1 \otimes f_1 \wedge f_3, \\
f_1^2 (e_3  \otimes f_2 \wedge f_3) =0,  \\
f_2^2 (e_2 \otimes f_1 \wedge f_2) = -2 f_2 \otimes f_1 \wedge f_2 + 2 h_2 \otimes f_3 \wedge f_2, \\
 f_2^2 (e_3 \otimes f_1 \wedge f_3) =0,  \\
(-2f_2 f_1 +f_1 f_2)(e_1 \otimes f_1 \wedge f_2) = - 2 f_2 \otimes f_1 \wedge f_2 - h_1 \otimes f_3 \wedge f_2,  \\   
(-2f_2 f_1 +f_1 f_2)(e_3 \otimes f_2 \wedge f_3) = -h_1 \otimes f_2 \wedge f_3 - 2h_2 \otimes f_2 \wedge f_3,   \\ 
(- 2 f_1 f_2 + f_2 f_1) (e_2 \otimes f_1 \wedge f_2) = -2 f_1 \otimes f_1 \wedge f_2 + h_2 \otimes f_1 \wedge f_3, \\ 
(- 2 f_1 f_2 + f_2 f_1)(e_3 \otimes f_1 \wedge f_3)  = h_2 \otimes f_1 \wedge f_3 + 2 h_1 \otimes f_1 \wedge f_3.  
 \end{array}  \]
The right-hand side terms of all these equations lie in the submodule $\Delta_3(\lb \otimes (\n \wedge \n))$. 
Indeed, we have  
\[ 
\begin{array}{rcl}  
\Delta_3(h_1 \otimes f_3 \wedge f_1) & = & h_1 \otimes f_3  \wedge f_1   + e_2  \otimes f_1 \wedge f_2 \wedge f_3, \\
\Delta_3(h_2 \otimes f_1 \wedge f_2) & = &  h_2 \otimes f_1  \wedge f_2  -e_3  \otimes f_1 \wedge f_2 \wedge f_3,  \\ 
\Delta_3(h_2 \otimes f_2  \wedge f_3) & =  & h_2 \otimes f_2  \wedge f_3  + e_1  \otimes f_1 \wedge f_2 \wedge f_3, \\ 
\Delta_3(h_2 \otimes f_3 \wedge f_1) & = & h_2 \otimes f_3  \wedge f_1   - 2e_2  \otimes f_1 \wedge f_2 \wedge f_3, \\
\Delta_3(f_1 \otimes f_1 \wedge f_2) & = & f_1 \otimes f_1  \wedge f_2  +e_2  \otimes f_1 \wedge f_2 \wedge f_3,  \\ 
\Delta_3(f_2 \otimes f_1 \wedge f_2) & = & f_2 \otimes f_1  \wedge f_2  -e_1 \otimes f_1 \wedge f_2 \wedge f_3 .  \\ 
\end{array} 
\]   
This implies the equalities 
\[ 
f_1 \otimes f_1 \wedge f_2 \equiv h_1 \otimes f_3\wedge f_1 \equiv -\frac{1}{2} h_2 \otimes f_3 \wedge f_1 \quad (\op{mod}~   
\Delta_3(\lb \otimes (\n \wedge \n)), 
\] 
\[ 
f_2 \otimes f_1 \wedge f_2 \equiv  - h_2 \otimes f_2 \wedge f_3 \equiv  \frac{1}{2} h_1 \otimes f_2 \wedge f_3 \quad (\op{mod}~   
\Delta_3(\lb \otimes (\n \wedge \n)). 
\] 
Thus we obtain that the kernel of $d_2^*$ in the complex (\ref{BGG-V_34}) is four dimensional, and its first cohomology group is three dimensional.  
\end{proof}  

Therefore the cohomology $\op{H}^1(\lb, \h, V_3^{-4})$ is three dimensional and we have  $\mH^1(\F_2) \cong L_0^{\oplus 3}$. 
 According to the sequence \eqref{coh_F2}, this implies that $\mH^2(\F_2) \cong L_0^{\oplus2}$, a result that can be confirmed explicitly by computing the cohomology of the next term of the complex (\ref{BGG-V_34}). 
 
 Finally, we have obtained the component of the center 
\[ 
\mH^1(\Nt, \wedge^3T\Nt)^{-4} \cong L_0^{\oplus 3} .
\]

\paragraph{Case $j=4$.} The computations in this case can be reduced to the $j=2$ case via Corollary \ref{cor-half-reduction}. However, for the sake of completeness, we also record a direct computation here.

We have to find $\op{HH}^0(\ul_0)_{j=4} \cong \oplus_{i+k=-4} \mH^i(\Nt,  \wedge^4 T\Nt)^k$.
The admissible values of $k$ are $k=-4$ and $k=-6$. 

If $k=-6$, then $i=2$ and the degree $-6$ part of $\op{pr}_*(\wedge^4T\Nt)$ is equal to
$\Omega_X^3\otimes T_X$:
\[ 
\mH^2(\Nt, \wedge^4 T\Nt)^{-6} \cong \mH^2(X, \Omega_X^3 \otimes T_X) \cong \mH^2(X, \Omega_X^2) \cong L_0^{\oplus 2} .
\]
In the second isomorphism we have used the identification of sheaves over the three-dimensional variety $X$: 
$$\Omega_X^3 \otimes T_X \cong \Omega_X^2.$$

When $k =-4$, $i=0$, let $\mathcal{F}_3:=\mathrm{pr}_*(\wedge^4T\Nt)^{-4}$, which fits into the following short exact sequence of vector bundles over $X$:
\begin{equation}\label{eqn-ses-F3}
0 \lra \Omega^3 \otimes T \otimes T \lra \F_3 \lra \Omega^2 \otimes \wedge^2 T \lra 0 .
\end{equation}
Using isomorphism of vector bundles $\Omega^3 \otimes T \cong \Omega^2$, we get
\[ 
0 \lra \Omega^2 \otimes T \lra \F_3 \lra \Omega^2 \otimes \wedge^2 T \lra 0 .
\]
We then obtain the long exact sequence of cohomology groups
\[ 
0 \lra \mH^0(\Omega^2\otimes T)\lra \mH^0(\F_3) \lra \mH^0(\Omega^2\otimes \wedge^2 T)\lra \mH^1(\Omega^2\otimes T)\lra \mH^1(\F_3)\lra \cdots,
\]
which further reduces to (Table \eqref{table-1})
\[
0 \lra \mH^0(\F_3)\lra L_0\lra L_0^{\oplus 3}\lra  \mH^1(\F_3) \lra \cdots.
\]
We are again left to find out whether the sequence \eqref{eqn-ses-F3} splits equivariantly. 

To determine whether the splitting happens, we need a more detailed understanding of the bundle 
$\F_3\cong G\times_B V_4^{-4}$, where 
\begin{equation}\label{eqn-V4-minus-4}
V_4^{-4}=(\wedge^4V_1)^{-4}\cong
\dfrac{\wedge^2 \g \otimes \wedge^2\n \oplus \lu \otimes \g\otimes \wedge^3 \n}{\Delta(\lb)\wedge(  \g \otimes \wedge^2 \n\oplus \lu\otimes \wedge^3\n)}.
\end{equation}

\begin{lemma} In terms of the Chevalley basis for $\mathfrak{sl}_3$ chosen in \eqref{eqn-Chevalley-basis-ef} and \eqref{eqn-Chevalley-basis-h}, the element
$$w:=e_1\wedge e_2 \otimes f_1\wedge f_2+e_1\wedge e_3 \otimes f_1\wedge f_3+e_2\wedge e_3 \otimes f_2\wedge f_3-e_3\otimes e_3\otimes f_1\wedge f_2 \wedge f_3$$
spans a trivial summand in the $\lb$-module $V_4^{-4}$. 
\end{lemma}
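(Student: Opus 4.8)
The statement is the wedge-degree-$4$, $\C^*$-degree-$(-4)$ analogue of Lemma~\ref{lemma-splitting-V2-minus-2}, and the plan is to prove it in the same two stages: first that $\C w$ is a trivial $\lb$-submodule of $V_4^{-4}$, and then that it splits off as a direct summand. Throughout I would use the presentation \eqref{eqn-V4-minus-4} of $V_4^{-4}$ and the Chevalley basis of \eqref{eqn-Chevalley-basis-ef}--\eqref{eqn-Chevalley-basis-h}, together with the formulas \eqref{phi} for $\ad_\n$ on a basis of $\lb$.

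For the first stage I would begin by noting that each of the four summands of $w$ is $\h$-homogeneous of weight zero: for the three terms in $\wedge^2\g\otimes\wedge^2\n$ the weight of $e_i\wedge e_j$ cancels that of $f_i\wedge f_j$, and for the term in $\lu\otimes\g\otimes\wedge^3\n$ the weight $2(\al_1+\al_2)$ of $e_3\otimes e_3$ cancels the weight of $f_1\wedge f_2\wedge f_3$. Since the submodule $\Delta(\lb)\wedge(\g\otimes\wedge^2\n\oplus\lu\otimes\wedge^3\n)$ appearing in \eqref{eqn-V4-minus-4} is $\h$-stable, this forces $\h\cdot w=0$ in $V_4^{-4}$, so it remains to check $\n\cdot w=0$. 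Because $\n=[\lb,\lb]$ is generated as a Lie algebra by $f_1$ and $f_2$ (with $f_3=[f_1,f_2]$), it suffices to verify $f_1\cdot w\equiv 0$ and $f_2\cdot w\equiv 0$ modulo the relation submodule; then $f_3\cdot w$ vanishes automatically. Concretely I would act by $f_1$, and then $f_2$, on each summand of $w$ via the Leibniz rule for the induced $\lb$-action on $\wedge^2\g\otimes\wedge^2\n$ and on $\lu\otimes\g\otimes\wedge^3\n$, collect the resulting terms, and rewrite them as an explicit $\C$-combination of generators $\Delta(y)\wedge\eta$ of the relation submodule --- the higher-degree analogue of how \eqref{eqn-delta2-h1-f1}--\eqref{eqn-delta2-h2-f2} were used in the proof of Lemma~\ref{lemma-splitting-V2-minus-2}. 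A cleaner conceptual route, which I would record as a remark, is to observe that up to a nonzero scalar and modulo the relation submodule $w$ coincides with $z\wedge z$, where $z\in V_2^{-2}$ is the $\lb$-invariant of Lemma~\ref{lemma-splitting-V2-minus-2} and the product is taken in the $B$-equivariant exterior algebra $V_\star$ of \eqref{eqn-V-bullet}; expanding $z\wedge z$ in the Chevalley basis and rearranging wedge factors reproduces the four terms of $w$, and then $\lb\cdot(z\wedge z)=(\lb\cdot z)\wedge z+z\wedge(\lb\cdot z)=0$ is immediate from $\lb\cdot z=0$.

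For the second stage I would exhibit an $\lb$-equivariant linear functional $\rho\colon V_4^{-4}\to\C$ with $\rho(w)\neq 0$; since $\C$ is the trivial module and $\ker\rho$ is an $\lb$-submodule, this gives $V_4^{-4}=\C w\oplus\ker\rho$. Define $\rho$ on the numerator of \eqref{eqn-V4-minus-4} by sending $\lu\otimes\g\otimes\wedge^3\n$ to $0$ and by composing the quotient $\wedge^2\g\otimes\wedge^2\n\to\wedge^2(\g/\lb)\otimes\wedge^2\n$ with the canonical $B$-isomorphisms $\g/\lb\cong\lu\cong\n^*$, the identification $\wedge^2(\n^*)\otimes\wedge^2\n\cong\op{End}(\wedge^2\n)$, and the trace. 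One checks that $\rho$ annihilates the relation submodule: a generator $\Delta(y)\wedge\eta$ with $y\in\lb$ has its $\wedge^2\g\otimes\wedge^2\n$-component of the form $(y\wedge x)\otimes(\cdots)$, which dies under $\g\to\g/\lb$ because $\bar y=0$, while its remaining components land in $\lu\otimes\g\otimes\wedge^3\n$ or involve $\wedge^4\n=0$. Finally, under the normalization $\langle e_i,f_j\rangle=\delta_{ij}$ the part $\sum_{i<j}e_i\wedge e_j\otimes f_i\wedge f_j$ of $w$ corresponds to $\op{Id}_{\wedge^2\n}$, so $\rho(w)=\op{tr}(\op{Id}_{\wedge^2\n})=3\neq 0$. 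Combined with the first stage, this shows $\C w$ is a trivial $\lb$-summand of $V_4^{-4}$.

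The only laborious point is the bookkeeping in the first stage --- tracking the terms produced by $f_1\cdot w$ and $f_2\cdot w$ and matching them against the relation submodule (equivalently, fixing the signs in the identity $w\propto z\wedge z$). I expect this entirely mechanical computation to be the main obstacle; the rest, including the construction and verification of $\rho$, is formal.
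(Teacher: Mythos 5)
Your proposal is correct and follows essentially the same route as the paper: the invariance of $w$ is verified exactly as there, by reducing to $f_1$ and $f_2$ (with $\h$ handled by the weight-zero observation and $f_3=[f_1,f_2]$) and rewriting $f_1\cdot w$, $f_2\cdot w$ as explicit elements of $\Delta(\lb)\wedge(\g\otimes\wedge^2\n\oplus\lu\otimes\wedge^3\n)$, which is precisely the pair of identities the paper records. Your second stage (the trace functional through $\wedge^2(\g/\lb)\otimes\wedge^2\n\cong\mathrm{End}(\wedge^2\n)$) merely spells out the splitting mechanism of Lemma~\ref{lemma-splitting-V2-minus-2} that the paper invokes by saying the proof is ``similar,'' and your $w\propto z\wedge z$ observation coincides with the paper's own remark following the lemma (up to the sign bookkeeping you rightly flag as the only delicate point).
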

\begin{proof}The proof is similar to that of Lemma \ref{lemma-splitting-V2-minus-2}. One is reduced to checking the following identities
\[
f_1\cdot w= -\Delta(h_1)\wedge (e_2\otimes f_1\wedge f_2+e_3\otimes f_1\wedge f_3)),
\]
\[
f_2\cdot w= -\Delta(h_2)\wedge (e_1\otimes f_1\wedge f_2+e_3\otimes f_2\wedge f_3)).
\]
\end{proof}

\begin{remark} The element $w$ is proportional to the second wedge power of the element $z$ found in Lemma 
\ref{lemma-splitting-V2-minus-2}. A more general description of $z$ and its powers will be given 
in Lemma \ref{lemma-Poisson} of Section \ref{sec-center-symmetry}. 
\end{remark}

\paragraph{Case $j=5$.} In this case, $\op{HH}^0(\ul_0)_{j=5} \cong \oplus_{i+j =-5} \mH^i(\Nt, \wedge^5 T\Nt)^k$
and the only admissible value of $k$ is $k=-6$, corresponding to the sheaf $\wedge^3 T_{vert} \otimes \wedge^2 T_{hor}$.
We have
\[ 
\op{HH}^0(\ul_0)_{j=5} \cong \mH^1(\Nt,  \op{pr}^*(\Omega^3_{X} \otimes \wedge^2 T_{X} ) )
\cong \mH^1(X, \Omega_X^3 \otimes \wedge^2 T_X) \cong \mH^1(X, \Omega_X) \cong  L_0^{\oplus 2} .
\]

\paragraph{Case $j=6$.} Then $k=-6$, $i=0$ and for rank reasons the sheaf $\wedge^6 T\Nt\cong \wedge^3 T_{vert} \otimes \wedge^3 T_{hor}$. We have
\[ 
\op{HH}^0(\ul_0)_{j=6} \cong \mH^0(\Nt, \op{pr}^*(\Omega^3_{X} \otimes \wedge^3 T_{X})) \cong
\mH^0(X, \Ox_X) \cong L_0. 
\]

\paragraph{Summary.} 
Let us introduce the following notion for any semisimple Lie group $G$ over a characteristic-zero algebraically closed field.

As in the $\mathfrak{sl}_3$ case, let $X=G/B$ be the flag variety whose complex dimension equals $n$, and $\Nt:=T^*X$ be the Springer variety.   
\begin{definition}\label{def-formal-Hodge}
The \emph{formal Hodge diamond}\footnote{Here we keep in mind that $\Omega_{\Nt}\cong T_{\Nt}$.} for the variety $\Nt$ is the bigraded zeroth Hochschild cohomology table
\begin{equation}\label{table-Hodge}
\begin{gathered}
\begin{array}{|c|c|c|c|} 
\hline
 \mH^0(\wedge^0T\Nt)^0       &                             &                         &                            \\  \hline 
 \mH^1(\wedge^1T\Nt)^{-2}    &  \mH^0(\wedge^2T\Nt)^{-2}   &                         &                            \\  \hline 
         \vdots              &        \vdots               &  \hspace{0.3in}\ddots \hspace{0.3in}    &                   \\  \hline
 \mH^n(\wedge^nT\Nt)^{-2n}   &  \mH^{n-1}(\wedge^{n+1}T\Nt)^{-2n}   &  \cdots                 &  \mH^0(\wedge^{2n}T\Nt)^{-2n}    \\  \hline 
\end{array}  
\end{gathered} \ .
\end{equation}
The empty boxes indicate that the corresponding spaces vanish due to degree reasons. The dimension of each entry will be denoted by
\[
h^{i,j}:= \mathrm{dim}_\C(\mH^{i}(\wedge^jT\Nt)^{-i-j}).
\]
\end{definition}

\begin{theorem}\label{thm-sl3}
The dimension table for the formal Hodge diamond for small quantum $\mathfrak{sl}_3$ is given by 
\[
\begin{gathered}
\begin{array}{|c||c|c|c|c|} \hline 
         \scriptstyle{ j+i=0 } & 1   &                         &                         &                          \\  \hline 
              \scriptstyle{ j+i=2 } & 2   &   1                 &                         &                          \\  \hline 
               \scriptstyle{ j+i=4 }   & 2   &  3  &    1  &                          \\  \hline
             \scriptstyle{ j+i=6 }    &  1   &  2  &  2  &   1   \\  \hline \hline 
 \scriptstyle{h^{i,j}} &  \scriptstyle{ j-i=0 }       &     \scriptstyle{ j-i=2 }       &    \scriptstyle{ j-i=4 }    &  \scriptstyle{ j-i=6 }  
 \\ \hline 
\end{array}  
\end{gathered} \ .
\]
In particular, the center of the principal block for $u_q(\mathfrak{sl}_3)$ is $16$ dimensional. Furthermore, each entry $\mH^i(\wedge^j T\Nt)^{-i-j}$ in the formal Hodge diamond is a direct sum of trivial $\mathfrak{sl}_3$-representations.

\end{theorem}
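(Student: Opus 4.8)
The plan is to invoke Theorem~\ref{Hoh}, which identifies $\zl_0\cong \mH^0(\ul_0)\cong\bigoplus_{i+j+k=0}\mH^i(\Nt,\wedge^jT\Nt)^k$, and to compute each bigraded summand separately as $j$ runs from $0$ to $6=\dim\Nt$. For each $j$ the short exact sequence~\eqref{vert_hor} relating the vertical and horizontal tangent bundles, together with $\op{deg}_k(T_{vert})=-2$ and $\op{deg}_k(T_{hor})=0$, restricts the admissible values of $k$, hence of $i=-j-k$; and $\wedge^jT\Nt$ acquires a filtration whose subquotients are the pullbacks $\op{pr}^*(\Omega_X^r\otimes\wedge^sT_X)$ with $r+s=j$. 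The ``pure'' pieces ($s=0$ or $r=0$) have cohomology given directly by Borel--Weil--Bott (Theorem~\ref{BWB}) and Borel's theorem (Theorem~\ref{thm-flag-cohomology}); the full list of the relevant values over $X$ is already recorded in Table~\eqref{table-1}. In this way the cases $j=0,1,5,6$ and the extreme $k$-components of $j=2,3,4$ are immediate, each reducing to $\mH^i(X,\Omega_X^p)$ or $\mH^i(X,\Ox_X)$ and contributing $L_0$ or $L_0^{\oplus 2}$.

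The new content is concentrated in the three ``middle'' sheaves $\F_1=\op{pr}_*(\wedge^2T\Nt)^{-2}$, $\F_2=\op{pr}_*(\wedge^3T\Nt)^{-4}$ and $\F_3=\op{pr}_*(\wedge^4T\Nt)^{-4}$. For each of these, its defining short exact sequence (e.g.~\eqref{eqn-F1-sheaf-sequence} for $\F_1$, and the analogues for $\F_2,\F_3$) produces a long exact cohomology sequence which, after substituting Table~\eqref{table-1}, pins down the answer up to a bounded ambiguity: $\mH^0(\F_1)\in\{0,L_0\}$, $\mH^1(\F_2)\cong L_0^{\oplus m}$ with $1\le m\le 3$, and $\mH^0(\F_3)\in\{0,L_0\}$. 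To resolve these I would use the explicit $\lb$-module descriptions $\F_i\cong G\times_B V_j^{k}$ from Corollary~\ref{cor-exterior-product}, made concrete in the Chevalley basis via~\eqref{phi} and~\eqref{eqn-delta2-h1-f1}--\eqref{eqn-delta2-h2-f2}. For $\F_1$ and $\F_3$ one exhibits an explicit weight-zero $\lb$-invariant vector --- $z=e_1\otimes f_1+e_2\otimes f_2+e_3\otimes f_3-e_3\otimes f_1\wedge f_2$ for $\F_1$, and (up to scalar) $z\wedge z$ for $\F_3$ --- spanning a trivial $B$-summand, which forces the ambiguous group to be $L_0$. For $\F_2$ the cleanest route is the BGG complex of Proposition~\ref{rel_lie_BGG} with $E=V_3^{-4}$ and $\lambda=0$: one writes down the complex~\eqref{BGG-V_34}, with $d_1^*=f_1\oplus f_2$ and the known quadratic formulas for $d_2^*$, and computes $\ker d_2^*/\op{im} d_1^*$ by expressing everything modulo the submodule $\Delta_3(\lb\otimes(\n\wedge\n))$, obtaining $m=3$.

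Having all entries, one assembles the table, using the duality $V_k^{-2r}\cong V_{2n-k}^{-2(n-k+r)}$ of Lemma~\ref{lemma-half-reduction} and Corollary~\ref{cor-half-reduction} to fold the $j=4,5,6$ rows onto the $j=2,1,0$ rows and cut the labor roughly in half. Summing dimensions along the antidiagonals, $1+(2+1)+(2+3+1)+(1+2+2+1)=16$, gives the stated dimension of $\zl_0(u_q(\mathfrak{sl}_3))$. Finally, that each of the $16$ entries is a sum of \emph{trivial} $\mathfrak{sl}_3$-modules follows by inspection of the above: the only $G$-modules occurring in any of the long exact sequences are copies of $L_0$ and $L_\rho$, and the $L_\rho^{\oplus 2}$ terms (coming from $\Omega\otimes T$ and $\Omega^2\otimes\wedge^2 T$) always sit in a cohomological bidegree off the antidiagonal $i+j+k=0$, hence never enter the formal Hodge diamond; alternatively, via Theorem~\ref{Bott_rel_lie} and Proposition~\ref{rel_lie_BGG} one checks that for each relevant triple $(i,j,k)$ the only dominant weight $\lambda$ with $\op{Hom}_G\big(L_\lambda,\mH^i(X,G\times_B V_j^{k})\big)\neq 0$ is $\lambda=0$.

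I expect the main obstacle to be the $j=3$, $k=-4$ case. There the long exact sequence alone leaves the three-way ambiguity $1\le m\le 3$, and settling it forces one to compute a genuinely nontrivial differential in the BGG complex while carefully tracking the submodule $\Delta_3(\lb\otimes(\n\wedge\n))$ --- this is precisely where the purely geometric filtration arguments stop being decisive and where the higher-rank complexity first appears.
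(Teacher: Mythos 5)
Your proposal follows essentially the same route as the paper: reduction via Theorem~\ref{Hoh} and the filtration coming from~\eqref{vert_hor}, Table~\eqref{table-1} plus long exact sequences for the easy cases, the explicit invariant $z$ (and $z\wedge z$) to split off the trivial summand in $\F_1$ and $\F_3$, and the BGG complex~\eqref{BGG-V_34} to get $m=3$ for $\F_2$, with the duality of Corollary~\ref{cor-half-reduction} cutting the work roughly in half. The only (immaterial) deviation is that you fold the $j=4,5,6$ rows onto $j=0,1,2$ via duality where the paper also records direct computations, and your observation that the $L_\rho$ contributions sit off the antidiagonal correctly supplies the triviality statement, which the paper obtains from the same exact sequences.
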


The table suggests that there is a bigraded isomorphism of vector spaces between the formal Hodge diamond for $\g = \mathfrak{sl}_3$ and the diagonal coinvariant algebra for $S_3$ \cite{Hai} (the general definition will be recalled in Section \ref{sec-conj}): 
\[
\mathrm{DC}_3:=\dfrac{\C[x_1,x_2,x_3, y_1,y_2, y_3]}{\C[x_1,x_2,x_3, y_1,y_2, y_3]^{S_3}_+}.
\]
When equipped with the bigrading $\mathrm{deg}(x_i)=(1,0)$ and $\mathrm{deg}(y_i)=(0,1)$ for all $i=1,2,3$, the bigraded dimension table for $\mathrm{DC}_3$ ($d^{i,j}:=\mathrm{dim}(\mathrm{DC}_3^{i,j})$) is 
\[
\begin{gathered}  
\begin{array}{|c||c|c|c|c|}   \hline    
     \; \;     \scriptstyle{ i=3 }  \; \;  & 1   &                         &                         &                          \\  \hline 
       \; \;   \scriptstyle{ i=2 }   \; \;  & 2   &   1                 &                         &                          \\  \hline 
        \; \;  \scriptstyle{ i=1 }  \; \;   & 2   &  3  &    1  &                          \\  \hline
   \; \;  \scriptstyle{ i=0 }   \; \;     &  1   &  2  &  2  &   1   \\  \hline \hline 
 \scriptstyle{d^{i,j}} & \;\;  \scriptstyle{j= 0 }   \;\;    &    \;\;  \scriptstyle{ j=1 }   \;\;    &  \;\; \scriptstyle{ j=2 }   \;\;  &  \;\;  \scriptstyle{ j=3 }   \;\; 
   \\ \hline 
\end{array}  
\end{gathered} \ .
\]

In particular, we have the equality 
\begin{equation}
d^{i,j}=h^{3-i-j,3+j-i},
\end{equation}
or equivalently
\begin{equation}
h^{i,j}=d^{3-\frac{i+j}{2}, \frac{j-i}{2}}.
\end{equation}

\begin{remark} 
The striking similarity between the formal Hodge diamond and the diagonal coinvariant algebra, in hindsight, is already evident 
(although not easily guessed!) for the principal block center $\zl_0(\mathfrak{sl}_2)$ from the work of Kerler \cite{Ker}, who computed it to be three dimensional. Using the method we have developed in this paper, it is easy to see that 
\begin{equation}\label{table-Hodge-sl2} 
\begin{gathered} 
\begin{array}{|c||c|c|} 
\hline    
         \scriptstyle{ j+i=0 }  & 1   &                                               \\  \hline 
        \scriptstyle{ j+i=2 }    & 1   &   1                                   \\  \hline \hline 
 \scriptstyle{h^{i,j}}  &   \scriptstyle{ j-i=0 }      &     \scriptstyle{ j-i= 2 }       \\ \hline  
   \end{array} 
\end{gathered} \ , 
\end{equation}
while the diagonal coinvariant algebra 
$$\mathrm{DC}_2:=\frac{\C[x_1,x_2,y_1,y_2]}{\C[x_1,x_2,y_1,y_2]^{S_2}_+}$$ 
has its bigraded dimension table equal to 
\begin{equation}
\begin{gathered}
\begin{array}{|c||c|c|} 
\hline 
        \scriptstyle{ i=1 }   & 1   &                                               \\  \hline 
   \;\;\; \scriptstyle{ i=0 }   \;\;\;   & 1   &   1                                   \\  \hline \hline 
    \scriptstyle{d^{i,j}}  & \;\;  \scriptstyle{ j=0 }  \;\;    &    \;\;  \scriptstyle{ j=1 }  \;\;     \\ \hline  
\end{array} 
\end{gathered} \ .
\end{equation}
A conjecture generalizing these similarities between the principal block of the center and the diagonal coinvariant algebra will be formulated in Section \ref{sec-conj}.
\end{remark}

\section{Symmetries of the center}\label{sec-center-symmetry}
\subsection{An \texorpdfstring{$\mathfrak{sl}_2$}{sl(2)}-action on the center}
Corollary \ref{cor-half-reduction} shows that the formal Hodge diamond \eqref{table-Hodge} has a $\Z/(2)$-symmetry of reflecting the entire table about the anti-diagonal. In this section we will obtain additional symmetry results for the principal block of the center by constructing an $\mathfrak{sl}_2$ action on the formal Hodge diamond. This action is reminiscent of the usual $\mathfrak{sl}_2$-action on the Dolbeault cohomology of a compact K\"{a}hler manifold.

\begin{lemma}\label{lemma-Poisson}
Let $\g$ be a simple Lie algebra and $\Nt$ its associated Springer resolution, then 
$$\mH^0(\Nt, \wedge^2T\Nt)^{-2}\cong L_0,$$ 
and it is spanned by the holomorphic Poisson bivector field $\tau$ which is dual to the canonical symplectic form $\omega$ on $T\Nt$.
\end{lemma}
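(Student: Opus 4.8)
The plan is to run the same strategy as in the $\mathfrak{sl}_3$ case (Lemma~\ref{lemma-splitting-V2-minus-2}) but uniformly: exhibit a canonical $\lb$-invariant generator and then bound the dimension from above using Bott's theorem. By Corollary~\ref{cor-exterior-product} and the identification \eqref{eqn-V2-minus-2}, we have
\[
\mH^0(\Nt,\wedge^2T\Nt)^{-2}\cong \mH^0\bigl(X,\; G\times_B V_2^{-2}\bigr),\qquad V_2^{-2}=\frac{\g\otimes\n\oplus\lu\otimes\wedge^2\n}{\Delta_2(\lb\otimes\n)},
\]
and I would work throughout with the short exact sequence of $B$-modules \eqref{eqn-F1-B-module-sequence}, $0\to \lu\otimes\wedge^2\n\to V_2^{-2}\to \lu\otimes\n\to 0$, whose outer terms are the fibers of $\Omega^2_X\otimes T_X$ and $\mathcal{E}nd(T_X)\cong T_X\otimes\Omega_X$ respectively.

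For the generator I would proceed geometrically. The cotangent bundle $\Nt=T^*X$ carries its canonical (holomorphic, exact) symplectic form $\omega$ of \eqref{eqn-symp-form}, which is $G$-invariant and homogeneous: since $\omega$ pairs the degree $-2$ vertical directions with the degree $0$ horizontal ones to a unit of $\mathcal{O}_{\Nt}$, it has $\mathbb C^*$-degree $+2$. The dual bivector $\tau:=\wedge^2\iota_\omega^{-1}(\omega)$, via \eqref{eqn-tang-iso-cotang}, is then a $G$-invariant, nowhere-vanishing section of $\wedge^2 T\Nt$ of degree $-2$. Evaluating $\tau$ on the fiber of $\op{pr}$ over $eB$ produces a nonzero $\lb$-invariant vector $z\in V_2^{-2}$; in Chevalley notation this is exactly the general-type analogue of the element of Lemma~\ref{lemma-splitting-V2-minus-2}, namely $z=\sum_{\alpha\in R^+}e_\alpha\otimes f_\alpha$ plus a correction term lying in $\lu\otimes\wedge^2\n$ (the correction records the $\n$-nonlinearity of the horizontal lift), and the identities $f_i\cdot z\in\Delta_2(\lb\otimes\n)$ for all simple $i$ can be verified exactly as in \emph{loc.\ cit.} In particular $(V_2^{-2})^{\lb}\neq 0$.

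For the upper bound I would invoke Bott's Theorem~\ref{Bott_rel_lie} together with Lemma~\ref{H0_H1_rel_lie}: for every dominant $\lambda$,
\[
\op{Hom}_G\bigl(L_\lambda,\; \mH^0(\Nt,\wedge^2T\Nt)^{-2}\bigr)\;=\;(\,\op{Hom}(L_\lambda,V_2^{-2})\,)^{\lb}\;=\;\op{Hom}_{\lb}(L_\lambda,V_2^{-2}).
\]
Applying $\op{Hom}_{\lb}(L_\lambda,-)$ to the short exact sequence above and using left exactness bounds this by $\op{Hom}_{\lb}(L_\lambda,\lu\otimes\wedge^2\n)\oplus \op{Hom}_{\lb}(L_\lambda,\lu\otimes\n)$. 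The second summand is $\op{Hom}_G\bigl(L_\lambda,\mH^0(X,\mathcal{E}nd(T_X))\bigr)$, which by Lemma~\ref{lemma-one-diml-section-End} equals $\mathbb C$ for $\lambda=0$ and $0$ otherwise. Hence, once we know $\mH^0(X,\Omega^2_X\otimes T_X)=\mH^0(X,G\times_B(\lu\otimes\wedge^2\n))=0$, it follows that $\op{Hom}_{\lb}(L_\lambda,V_2^{-2})=0$ for $\lambda\neq 0$ and has dimension $\le 1$ for $\lambda=0$; combined with $(V_2^{-2})^{\lb}\neq 0$ from the previous paragraph we conclude $\mH^0(\Nt,\wedge^2T\Nt)^{-2}=(V_2^{-2})^{\lb}\otimes L_0=\mathbb C\,\tau\cong L_0$.

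The heart of the argument is therefore the vanishing $\op{Hom}_{\lb}(L_\lambda,\lu\otimes\wedge^2\n)=0$ for all dominant $\lambda$. For $\lambda=0$ this is clean: $\op{Hom}_{\lb}(\mathbb C,\lu\otimes\wedge^2\n)\cong \op{Hom}_{\lb}(\n,\wedge^2\n)$, and any such $\lb$-map annihilates each simple root vector $f_i$ because $\wedge^2\n$ has no weight $-\alpha_i$ (a simple root is not a sum of two positive roots), hence vanishes since the $f_i$ generate $\n$ as a Lie algebra. For $\lambda\neq 0$, using that $L_\lambda$ is generated over $U(\n)$ by its highest weight vector, a nonzero $\lb$-map would force a nonzero dominant weight of the form $\delta-\beta-\gamma$ with $\delta,\beta,\gamma\in R^+$ and $\beta\neq\gamma$; but $\delta-\beta-\gamma\prec\theta$ (the highest root) and has bounded norm, so this is a finite root-system check—immediate in type $A$, where $\theta$ is the minimal nonzero dominant element of the root lattice. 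I expect this last, non-uniform combinatorial verification for the non-simply-laced and exceptional types to be the only real obstacle, and it is routine. Finally I would record the explicit identification of the generator, $\tau=\wedge^2\iota_\omega^{-1}(\omega)$, noting that it lands in the degree $-2$ piece precisely because $\omega$ has degree $+2$, and that its restriction to the fiber over $eB$ is the element $z$ above (whose higher wedge powers reappear in the later cases, cf.\ the remark following the computation of $V_4^{-4}$).
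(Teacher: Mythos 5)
Your proposal has the same skeleton as the paper's proof: the lower bound comes from the $G$-invariance and homogeneity (degree $-2$) of the Poisson bivector field $\tau$, and the upper bound from the short exact sequence $0\to \lu\otimes\wedge^2\n\to V_2^{-2}\to\lu\otimes\n\to 0$ together with Lemma \ref{lemma-one-diml-section-End} and the vanishing of $\mH^0(X,\Omega^2_X\otimes T_X)\cong\mH^0(X,G\times_B(\lu\otimes\wedge^2\n))$. Your $\lambda=0$ argument for that vanishing (no weight $-\alpha_i$ in $\wedge^2\n$, and the $f_i$ generate $\n$) is correct. The gap is the case $\lambda\neq 0$ for general simple $\g$: you reduce it to the assertion that no nonzero dominant weight has the form $\delta-\beta-\gamma$ with $\delta,\beta,\gamma\in R^+$ and $\beta\neq\gamma$, verify it in type $A$, and declare the remaining types a routine finite check. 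That assertion is false outside the simply-laced case: in $B_3$ one has $(e_1+e_2)-(e_2-e_3)-e_3=e_1=\omega_1$, and in $G_2$ one has $\theta-\alpha_1-\alpha_2$ equal to the highest short root, a fundamental weight; in both cases this is a nonzero dominant weight occurring in $\lu\otimes\wedge^2\n$. For those types your weight-level criterion gives no information, and ruling out a nonzero $\lb$-map $L_\lambda\to\lu\otimes\wedge^2\n$ would require the finer check that no vector in that $\lambda$-weight space is annihilated by all $f_i^{\langle\lambda,\check{\alpha}_i\rangle+1}$ --- a case-by-case computation your proposal does not carry out. Since the lemma is stated for every simple $\g$, this is a genuine missing step, not a footnote.

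For comparison, the paper proves the same vanishing $\mH^0(X,T_X\otimes\Omega^2_X)=0$ uniformly in all types by a different homological reduction: using $0\to\wedge^2\n\otimes\lb\to\wedge^2\n\otimes\g\to\wedge^2\n\otimes\lu\to 0$ and $0\to\wedge^2\n\otimes\n\to\wedge^2\n\otimes\lb\to\wedge^2\n\otimes\h\to 0$, together with $\mH^0(X,\Omega^2_X)=\mH^1(X,\Omega^2_X)=0$, everything reduces to $\mH^1(X,G\times_B(\wedge^2\n\otimes\n))=0$. There every weight is $-\alpha-\beta-\gamma$ with $\alpha,\beta,\gamma\in R^+$, $\alpha\neq\beta$, and since the dominant chamber lies in the positive root cone, $s_i\cdot\lambda$ can never be dominant --- an argument that works for all simple $\g$ at once, precisely because the problematic ``one positive root minus two'' weights never enter. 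Your lower-bound argument and the identification of the fiber value of $\tau$ with the element of Lemma \ref{lemma-splitting-V2-minus-2} are fine; if you replace your $\lambda\neq 0$ step by the paper's reduction (or complete an honest case analysis in the non-simply-laced and exceptional types), the proof closes.
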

\begin{proof}
It is clear that the Poisson bivector field is a global section of $\wedge^2T\Nt$ that has degree $-2$. It is $G$-equivariant because its dual, the degree-2 symplecitic form $\omega$, is preserved under the infinitesimal $G$-action: for any $x\in \g$, the induced vector field $X\in \Gamma(\Nt, T\Nt)$ is Hamiltonian:
$
L_X(\omega) = 0.
$
It remains to give an upper bound for the dimension of $\mH^0(\Nt,\wedge^2\Nt)^{-2}$.

The bundle $\mathrm{pr}_*(\wedge^2 T\Nt)^{-2}$ fits into a short exact sequence on $X$
\[
0 \lra T\otimes \Omega^2 \lra \mathrm{pr}_*(\wedge^2T\Nt)^{-2} \lra T\otimes \Omega \lra 0.
\]
Taking sections, we get
\[
0\lra \mH^0(X,T\otimes \Omega^2 )\lra \mH^0(\Nt, \wedge^2\Nt)^{-2}\lra \mH^0(X, T\otimes \Omega)\lra \cdots.
\]
Using Lemma \ref{lemma-one-diml-section-End} that $\mH^0(X, T\otimes \Omega)\cong \C$, the claim will follow from the next vanishing result.
\end{proof}

\begin{lemma}
Let $\g$ be a simple Lie algebra and $X$ be its associated flag variety. Then
\[
\mH^0(X, T_X\otimes \Omega^2_X)=0.
\]
\end{lemma}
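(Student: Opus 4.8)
The statement is that $\mathrm{H}^0(X, T_X \otimes \Omega^2_X) = 0$ for $X$ the flag variety of a simple Lie algebra $\g$. Translating to equivariant bundles via \eqref{eqn-tan-cotan-as-equiv-bdl}, this amounts to showing $\mathrm{H}^0(X, G\times_B(\lu \otimes \wedge^2\n)) = 0$, which by Theorem \ref{Bott_rel_lie} and Lemma \ref{H0_H1_rel_lie} is the same as $(\lu \otimes \wedge^2\n)^\lb = 0$, the space of $\lb$-invariant vectors. The plan is to show that this invariant space vanishes by a direct weight argument.

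\textbf{Step 1.} Note $\mathrm{H}^0(X, T_X\otimes \Omega^2_X) \cong (\lu\otimes \wedge^2\n)^\lb$ by Bott's theorem. In particular, a nonzero invariant must be a highest weight vector for $\g$ of weight $0$; equivalently, using \eqref{eqn-n-cohomology}-style reasoning, $\mathrm{Hom}_G(L_0, \mathrm{H}^0(X, T_X\otimes\Omega^2_X))$ is computed by the degree-zero term $E[0]$ of the BGG complex \eqref{F_BGG} intersected with the kernel of the differential; so it suffices to show the weight-zero space $(\lu\otimes\wedge^2\n)[0]$ that survives is actually killed, or more simply that no $\lb$-invariant of weight $0$ exists. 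So first I would reduce to: there is no nonzero $v \in \lu\otimes\wedge^2\n$ of weight $0$ annihilated by all of $\n$ (the $\h$-invariance forces weight $0$ automatically).

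\textbf{Step 2 (main step).} Analyze the weight-zero subspace of $\lu\otimes\wedge^2\n$. Since $\lu$ has weights the positive roots $R^+$ and $\wedge^2\n$ has weights of the form $-(\beta+\gamma)$ for distinct $\beta,\gamma\in R^+$ (or $-\beta$ weights with multiplicity when $\beta+\gamma$ coincides), a weight-zero vector is a combination of $e_\alpha \otimes (f_\beta\wedge f_\gamma)$ with $\alpha = \beta+\gamma$. The key obstacle — and the heart of the proof — is to show that the raising operators $e_i$ (equivalently the lowering description via the dual $\n$-action) have no common kernel on this space. I expect the cleanest route is to use that $T_X\otimes\Omega^2_X$ contains $\Omega_X$ as a summand (via contraction $T_X\otimes\wedge^2\Omega_X \to \Omega_X$, i.e. the map $\lu\otimes\wedge^2\n\to \n$ coming from the pairing $\lu\otimes\n\to\C$), with complement the ``primitive'' part; since $\mathrm{H}^0(X,\Omega_X)=0$ by Table \eqref{table-1}/Theorem \ref{thm-flag-cohomology}, one reduces to the primitive part. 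Then I would argue that the primitive part, as a $B$-module, is generated in a way that admits no weight-zero invariant: concretely, an $\lb$-invariant in $\lu\otimes\wedge^2\n$ would give a $B$-module (hence $\g$-module after the BWB analysis) map $\C \to T_X\otimes\Omega^2_X$, splitting off a trivial summand; but one can check on the corresponding filtration by line bundles (as in the $\Omega^2\otimes T$ computation, e.g. \eqref{S^2TK} and the argument after it) that the relevant extension is non-split because the $\lb$-module is cyclic on a single highest-weight vector of nonzero weight, leaving no room for a weight-zero invariant vector. Alternatively, and perhaps more robustly in the general simple case, invoke that $X$ is \emph{stable} (as used for Lemma \ref{lemma-one-diml-section-End}): $\mathrm{H}^0(X,\mathcal{E}nd(TX)) = \C\cdot\mathrm{Id}$, and $T_X\otimes\Omega^2_X = \mathcal{H}om(TX, \wedge^2\Omega_X)$; a nonzero global section would be a nonzero bundle map $TX \to \wedge^2 TX^* $, and composing with the symplectic-type identifications one can derive a contradiction with stability (a nonzero such map would be an isomorphism onto its image, forcing $TX$ to have a quotient isomorphic to a subbundle of $\wedge^2\Omega_X$, incompatible with stability and the slope computation since $\mu(TX) = -\deg\Omega_X/\dim X$ while the relevant sub/quotients of $\wedge^2\Omega_X$ have strictly smaller slope).

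\textbf{Step 3.} Conclude: combining Step 1 and Step 2, $(\lu\otimes\wedge^2\n)^\lb = 0$, hence $\mathrm{H}^0(X, T_X\otimes\Omega^2_X)=0$. I expect the main obstacle to be making Step 2 uniform across all simple types without case-checking — the slickest fix is the stability/slope argument, which is type-independent, though one must be careful that $\wedge^2\Omega_X$ need not be semistable, so the argument should be phrased in terms of the nonexistence of a nonzero map $TX \to \wedge^2\Omega_X$ using $\mu_{\min}(\wedge^2\Omega_X) > \mu(TX)$ is false in general; the safe statement is $\mu_{\max}(\wedge^2\Omega_X) < \mu(TX)$, i.e. every sub-line-bundle $\mathcal{L}_{-\beta-\gamma}$ of $\wedge^2\Omega_X$ has $\deg < \mu(TX)\cdot(\text{rank adjustments})$, which one verifies from $\langle -\beta-\gamma, \text{(ample class)}\rangle < 0$ and the positivity of $-K_X$. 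If this slope bookkeeping proves delicate, I would fall back to the explicit $\lb$-invariance computation of Step 2 done type-by-type for the low-rank cases actually needed in the paper ($\mathfrak{sl}_3$, $\mathfrak{sl}_4$), which is a short finite check using the structure constants as in \eqref{phi}.
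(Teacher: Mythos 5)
Your Step 1 reduction is already incorrect, and it is the load-bearing step. By Frobenius reciprocity (equivalently, Bott's Theorem \ref{Bott_rel_lie} applied to each irreducible $W=L_\lambda$), the space $(\lu\otimes\wedge^2\n)^{\lb}$ computes only $\op{Hom}_G\left(L_0,\,\mH^0(X,T_X\otimes\Omega^2_X)\right)$, i.e.\ the multiplicity of the \emph{trivial} module in the global sections; in general $\mH^0(X,G\times_B E)\cong\bigoplus_\lambda L_\lambda\otimes\op{Hom}_{\lb}(L_\lambda,E)$, and this can be nonzero even when $E^{\lb}=0$ (for instance $\mH^0(X,T_X)\cong\g$ while $\lu^{\lb}=0$). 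So "no $\lb$-invariant weight-zero vector" does not give the lemma: you must rule out \emph{every} dominant $\lambda$ with $(\lu\otimes\wedge^2\n)[\lambda]\neq 0$, e.g.\ by running the degree-zero part of the complex in Proposition \ref{rel_lie_BGG} for each such $\lambda$, and your Step 2 arguments only ever address the $\lambda=0$ contribution.

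The alternative routes in Step 2 also do not close the gap as written. The stability/slope argument is set up on the wrong Hom bundle: $T_X\otimes\Omega^2_X\cong\mathcal{H}om_{\Ox_X}(\Omega_X,\Omega^2_X)$ (sections are bundle maps $\Omega_X\to\Omega^2_X$, equivalently $\wedge^2T_X\to T_X$), not $\mathcal{H}om(T_X,\wedge^2\Omega_X)=\Omega_X\otimes\Omega^2_X$, so the inequalities you propose between $\mu_{\max}(\wedge^2\Omega_X)$ and $\mu(T_X)$ concern a different space; moreover $\mu_{\max}$ is not controlled by line subbundles, and you never fix a polarization for which $T_X$ is stable nor invoke preservation of semistability under exterior powers, which a corrected slope argument (a nonzero map $\Omega_X\to\Omega^2_X$ would force $\mu(\Omega_X)\le\mu(\Omega^2_X)=2\mu(\Omega_X)<0$) would require. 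Finally, the declared fallback of a finite check for $\mathfrak{sl}_3$ and $\mathfrak{sl}_4$ does not prove the lemma, which is stated and used for an arbitrary simple $\g$ (it feeds into Lemma \ref{lemma-Poisson}, hence into the $\mathfrak{sl}_2$-action and Corollary \ref{cor-big-subalg}). For contrast, the paper's proof is uniform and avoids all of this: it tensors $\wedge^2\n$ with the filtration $\n\subset\lb\subset\g$, uses Borel's computation of $\mH^\bullet(X,\Omega^2_X)$ to reduce the claim to $\mH^1(X,G\times_B(\wedge^2\n\otimes\n))=0$, and kills the latter by a one-line Borel--Weil--Bott estimate: a weight $-\alpha-\beta-\gamma$ with $\alpha\neq\beta$ positive roots can never have $s_i\cdot\lambda$ dominant, since the dominant cone lies in the positive root cone.
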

\begin{proof}
In the proof, we identify $T_X\cong G\times_B\lu$ and $\Omega_X\cong G\times_B \n$, and abbreviate the corresponding cohomology as $\mH^\bullet(\lu):=\mH^\bullet(X,T_X)$ etc. Consider the short exact sequence of vector bundles
\[
G\times_B\left( 0\lra \wedge^2 \n\otimes \lb \lra \wedge^2\n \otimes \g \lra \wedge^2\n\otimes \lu \lra 0 \right).
\]
Taking cohomology, we get that
\[
0\cong \mH^0(\wedge^2\n\otimes\g)\lra \mH^0(\wedge^2\n\otimes\lu)\lra \mH^1(\wedge^2\n\otimes \lb),
\]
and it suffices to show that $\mH^1(\wedge^2\n\otimes \lb)=0$. 

To do this, we use a second short exact sequence
\[
G\times_B\left( 0\lra \wedge^2 \n\otimes \n \lra \wedge^2\n \otimes \lb \lra \wedge^2\n\otimes \h \lra 0 \right)
\]
to bound the interested-in $\mH^1(\wedge^2\n\otimes \n)$:
\[
0\cong \mH^0(\wedge^2\n \otimes \h)\lra \mH^1(\wedge^2\n \otimes \n)\lra \mH^1(\wedge^2\n\otimes \lb)\lra \mH^1(\wedge^2\n \otimes \h)\cong 0.
\]
Therefore, the desired vanishing result will follow if we show $\mH^1(\wedge^2\n\otimes \n)=0$. This can be done by analyzing the weights in the module $\wedge^2\n\otimes\n$, which are of the form $\lambda=-\alpha-\beta-\gamma$, where $\alpha,\beta, \gamma$ are positive roots such that $\alpha$ and $\beta$ are distinct. For such a weight to contribute to the first cohomology, we need the shifted simple reflection action on $\lambda$
\[
s_i\cdot \lambda = -(\alpha+\beta+\gamma)+(\langle \alpha+\beta+\gamma,\alpha_i^\vee\rangle -1)\alpha_i
\]
to be a dominant weight, where $\alpha_i$ is some simple root, and $s_i$ is the corresponding simple refection. Since the dominant weight chamber is contained in the positive root cone, this can never happen as $\alpha$, $\beta$ are distinct.
\end{proof}

Consider the following $\mathcal{O}_\Nt$-linear bundle operations on the total exterior product bundle $\wedge^\star T\Nt$: given a local section of $\wedge^\star T\Nt$, define
\begin{equation}\label{eqn-wedge-tau}
\tau\wedge(-):\wedge^\star T\Nt {\lra} \wedge^{\star+2} T\Nt,
\quad
\eta \mapsto \tau \wedge \eta,
\end{equation}\label{eqn-contract-omega}
to be the wedge product with the Poisson bivector field, and
\begin{equation}
\iota_\omega(-):\wedge^\star T\Nt {\lra} \wedge^{\star-2} T\Nt,
\quad
\eta \mapsto \iota_\omega  (\eta),
\end{equation}
to be the contraction map with the symplectic form $\omega$. 

\begin{theorem}\label{thm-sl2-action}The two maps $\tau\wedge$ and $\iota_\omega$ satisfy the relation
\[
[\iota_\omega, \tau\wedge]=n-\mathrm{deg}: \wedge^\star T\Nt \lra \wedge^\star T\Nt.
\]
Consequently, they induce an $\mathfrak{sl}_2$-action on the Hochschild cohomology groups
\[
\tau\wedge: \mH^i(\Nt, \wedge^jT\Nt)^k\lra \mH^i(\Nt, \wedge^{j+2}T\Nt)^{k-2}, 
\]
\[ 
\iota_\omega: \mH^i(\Nt, \wedge^jT\Nt)^k\lra \mH^i(\Nt, \wedge^{j-2}T\Nt)^{k+2}.
\]
\end{theorem}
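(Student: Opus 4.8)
\emph{Strategy.} The plan is to reduce the operator identity $[\iota_\omega,\tau\wedge]=n-\mathrm{deg}$ to its pointwise (fibrewise) counterpart — the $\mathfrak{sl}_2$-action on the exterior algebra of a symplectic vector space, which is the algebraic core of the Lefschetz decomposition — and then to transport it to sheaf cohomology using holomorphicity of $\tau$ and $\omega$. The first observation is that $\tau\wedge(-)$ and $\iota_\omega(-)$ are $\Ox_\Nt$-linear endomorphisms of the locally free sheaf $\wedge^\star T\Nt$, and that $n-\mathrm{deg}$ acts by the scalar $n-j$ on the summand $\wedge^j T\Nt$. Hence the asserted identity can be checked in each fibre: for $x\in\Nt$, the tangent space $W:=T_x\Nt$ is a $2n$-dimensional complex vector space carrying the nondegenerate alternating form $\omega_x\in\wedge^2 W^*$ and the dual bivector $\tau_x\in\wedge^2 W$, and one must show that on $\wedge^j W$ the operators $L:=\tau_x\wedge(-)$ and $\Lambda:=\iota_{\omega_x}(-)$ satisfy $[\Lambda,L]=(n-j)\,\mathrm{id}$.

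\emph{The pointwise identity.} I would prove this by the standard Darboux-basis computation. Choose $u_1,\dots,u_n,v_1,\dots,v_n$ with $\omega_x(u_a,v_b)=\delta_{ab}$ and $\omega_x(u_a,u_b)=\omega_x(v_a,v_b)=0$, so that $W=\bigoplus_a W_a$ with $W_a=\C u_a\oplus\C v_a$, $\tau_x=\sum_a u_a\wedge v_a$, and $\wedge^\star W=\bigotimes_a\wedge^\star W_a$. Then $L=\sum_a L_a$ and $\Lambda=\sum_a \Lambda_a$ act factorwise, and since the operators attached to distinct pairs are even and act on different tensor factors, $[\Lambda,L]=\sum_a[\Lambda_a,L_a]$. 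A direct computation on the four-dimensional space $\wedge^0 W_a\oplus\wedge^1 W_a\oplus\wedge^2 W_a$ shows (after fixing the sign of $\Lambda_a$) that $[\Lambda_a,L_a]$ acts by $1,0,-1$ in exterior degrees $0,1,2$, i.e. by $1$ minus the exterior degree in the $a$-th factor. Summing the contributions of the $n$ pairs gives $n$ minus the total exterior degree, i.e. $[\Lambda,L]=(n-j)\,\mathrm{id}$ on $\wedge^j W$; the bookkeeping is the symplectic analogue of the proof of the Lefschetz $\mathfrak{sl}_2$-relations. This establishes $[\iota_\omega,\tau\wedge]=n-\mathrm{deg}$ as an identity of sheaf endomorphisms of $\wedge^\star T\Nt$.

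\emph{Passing to cohomology.} Since $\Nt$ is holomorphic symplectic, $\omega$ is a global section of $\wedge^2 T^*\Nt$, and by Lemma \ref{lemma-Poisson}, $\tau$ is a global section of $\wedge^2 T\Nt$ of $\C^*$-weight $-2$; the isomorphism \eqref{eqn-tang-iso-cotang} then forces $\omega$ to have $\C^*$-weight $+2$. Being wedge/contraction against global sections, $\tau\wedge$ and $\iota_\omega$ are $\C^*$-equivariant morphisms of $\Ox_\Nt$-modules, hence induce maps on sheaf cohomology, and the bidegree bookkeeping yields exactly
\[
\tau\wedge:\mH^i(\Nt,\wedge^jT\Nt)^k\to\mH^i(\Nt,\wedge^{j+2}T\Nt)^{k-2},\qquad
\iota_\omega:\mH^i(\Nt,\wedge^jT\Nt)^k\to\mH^i(\Nt,\wedge^{j-2}T\Nt)^{k+2},
\]
with the commutator relation descending from the sheaf level. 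For fixed $i$, setting $e:=\tau\wedge$, $f:=\iota_\omega$ and $h:=\mathrm{deg}-n$ on $\bigoplus_{j,k}\mH^i(\Nt,\wedge^jT\Nt)^k$ realizes $[e,f]=h$, while $[h,e]=2e$ and $[h,f]=-2f$ are immediate because $e$ and $f$ shift $j$ by $\pm 2$; as $j$ is bounded and each graded piece is finite-dimensional ($X$ being projective), this is a genuine finite-dimensional $\mathfrak{sl}_2$-module, which proves the theorem.

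\emph{Main obstacle.} The only genuinely delicate point is the sign-and-normalization bookkeeping in the pointwise step — pinning down the conventions for the contraction $\iota_\omega$ and for the dual bivector $\tau$ so that the commutator comes out to exactly $n-\mathrm{deg}$ rather than its negative or a shifted version — together with confirming that $\omega$ carries $\C^*$-weight $+2$ (forced by \eqref{eqn-tang-iso-cotang} and Lemma \ref{lemma-Poisson}), which is what makes the weight shifts in the displayed maps internally consistent. Once the symplectic-linear-algebra identity is secured, the remainder is formal.
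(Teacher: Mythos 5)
Your proposal is correct and follows essentially the same route as the paper: both reduce the commutator identity to a fibrewise check using $\Ox_{\Nt}$-linearity, pass to a Darboux (symplectic) basis, decompose into the $n$ hyperbolic planes so that the identity reduces to the rank-one case, and then transport the identity to cohomology via the $\C^*$-equivariance of wedging with $\tau$ and contracting with $\omega$. Your extra care with the $\C^*$-weights of $\tau$ and $\omega$ and with the $[h,e]$, $[h,f]$ relations is a fine (slightly more explicit) elaboration of what the paper leaves implicit, but it is not a different method.
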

\begin{proof}Since multiplication by $\tau$ and contraction with $\omega$ are $\mathcal{O}_{\Nt}$-linear, it suffices to check the commutator relation on any fiber of the bundle $\wedge^\star T\Nt$ at a point $p\in \Nt$.

Locally, if we choose a symplectic coordinates system $\{ x_i, y_i|i=1,\dots, n\}$ near a point $p\in \Nt$ such that
\[
\omega( \frac{\partial }{\partial x_i}, \frac{\partial }{\partial y_j})=-\omega( \frac{\partial }{\partial y_j}, \frac{\partial }{\partial x_i})=\delta_{i,j},
\]
then the Poisson bivector field can be written locally as
\[
\tau_p=\sum_{i=1}^n  \frac{\partial }{\partial x_i}\wedge  \frac{\partial }{\partial y_i},
\]
and the symplectic form $\omega$ equals
\[
\omega_p=\sum_{i=1}^n dx_i\wedge dy_i.
\]
Since distinct pairs of vectors  $\{\frac{\partial}{\partial x_i}, \frac{\partial }{\partial y_i}\}$ (or dual vectors $\{dx_i,dy_i\}$) do not interact with each other, we are reduced to the case when $n=1$, and higher $n$ cases follow by taking tensor products.

When $n=1$, we have
$$\wedge^\star(\C \frac{\partial }{\partial x_1}\oplus  \C \frac{\partial }{\partial y_1})\cong \C 1 \oplus \C \frac{\partial }{\partial x_1}\oplus \C \frac{\partial }{\partial y_1}\oplus \C \frac{\partial }{\partial x_1}\wedge  \frac{\partial }{\partial y_1}.$$ 
Wedging with $\tau_p$ sends $1$ to $\frac{\partial}{\partial x_i} \wedge \frac{\partial }{\partial y_i}$ while contracting with $\omega_p$ takes $\frac{\partial }{\partial x_1}\wedge  \frac{\partial }{\partial y_1}$ back to $1$. Both operations kill the middle terms $ \frac{\partial }{\partial x_1} $ and $ \frac{\partial }{\partial y_1} $. The result follows.
\end{proof}

\begin{corollary}\label{cor-sl2-iso}
For any $j\in \{0,1,\dots, n\}$, there are isomorphisms of Hochschild cohomology groups  
\[
\tau^j\wedge(-): \mH^i(\Nt, \wedge^{n-j}T\Nt)^k\lra \mH^i(\Nt, \wedge^{n+j}T\Nt)^{k-2j}.
\]
\end{corollary}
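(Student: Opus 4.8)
The plan is to deduce this Corollary from the $\mathfrak{sl}_2$-action furnished by Theorem \ref{thm-sl2-action}, exactly in the way that the hard Lefschetz theorem follows from an $\mathfrak{sl}_2$-action on the cohomology of a K\"{a}hler manifold. First I would observe that for each fixed $i$, the direct sum $\bigoplus_{j,k}\mH^i(\Nt,\wedge^jT\Nt)^k$ is a module over the Lie algebra $\mathfrak{sl}_2=\langle e,f,h\rangle$, where $e$ acts by $\tau\wedge(-)$, $f$ acts by $\iota_\omega(-)$, and $h=[\iota_\omega,\tau\wedge]=n-\mathrm{deg}$ by Theorem \ref{thm-sl2-action}; here the wedge degree $j$ and the $\C^*$-degree $k$ are linked on the relevant ``anti-diagonal'' appearing in the formal Hodge diamond, so that the operator $\mathrm{deg}$ takes a well-defined integer value on each summand. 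Since $\Nt$ is a smooth variety with affine morphism $\pr$ to the projective $X$, all these cohomology groups are finite-dimensional, so this is a finite-dimensional $\mathfrak{sl}_2$-representation.

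Next I would pin down the $h$-weights. On $\mH^i(\Nt,\wedge^{n-j}T\Nt)^k$ the operator $\mathrm{deg}$ (the $\C^*$-weight, which by the grading conventions equals $-(n-j)$ plus the number of ``vertical'' tangent directions, but more simply is read off from the position in the diamond) evaluates so that $h$ acts by the scalar $+j$; symmetrically, on $\mH^i(\Nt,\wedge^{n+j}T\Nt)^{k'}$ it acts by $-j$. The cleanest way to see this without tracking the grading bookkeeping is to note that wedging with $\tau$ raises wedge degree by $2$ and, since $\tau$ has degree $-2$, lowers $k$ by $2$, so $\mathrm{deg}$ changes by $-2$ under $e$; hence $h=n-\mathrm{deg}$ changes by $+2$ under $e$, consistent with the $\mathfrak{sl}_2$-relation $[h,e]=2e$, and the extreme wedge degrees $0$ and $2n$ force the weights to be symmetric about $0$, giving weight $+j$ on the $\wedge^{n-j}$ piece. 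Then the standard representation theory of $\mathfrak{sl}_2$ applies: in any finite-dimensional $\mathfrak{sl}_2$-module, the operator $e^j$ restricts to an isomorphism from the weight space of weight $+j$ onto the weight space of weight $-j$. Applying this with $e=\tau\wedge(-)$ yields precisely
\[
\tau^j\wedge(-):\mH^i(\Nt,\wedge^{n-j}T\Nt)^k\xrightarrow{\ \cong\ }\mH^i(\Nt,\wedge^{n+j}T\Nt)^{k-2j},
\]
which is the assertion; the shift $k\mapsto k-2j$ is automatic since $\tau$ has $\C^*$-degree $-2$.

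The main obstacle is a bookkeeping one rather than a conceptual one: I must make sure that the $\mathfrak{sl}_2$-triple genuinely closes up \emph{on cohomology}, i.e. that the commutator identity of Theorem \ref{thm-sl2-action} descends to the Hochschild cohomology groups (it does, since $\tau\wedge$ and $\iota_\omega$ are $\mathcal{O}_{\Nt}$-linear bundle maps and hence induce maps on $\mH^i(\Nt,-)$ compatible with the relation), and that the weight $h$ acts by the correct scalar on each bigraded piece. The latter requires carefully matching the two gradings: on $\mH^i(\Nt,\wedge^jT\Nt)^k$ entering the formal Hodge diamond one has $i+j+k$ fixed, and I would record that $\mathrm{deg}$ acts by $k$ and that along the column of fixed ``$j-i$'' (equivalently fixed total degree class) $h=n-\mathrm{deg}$ takes the values $n-k$ which, as $j$ ranges, are exactly $\{-n,\dots,n\}$ in steps of $2$. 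Once this identification is in place, the conclusion is immediate from the $e^j$-isomorphism for finite-dimensional $\mathfrak{sl}_2$-modules, and no further computation is needed.
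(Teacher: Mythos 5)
Your proposal is correct and is essentially the paper's own proof, which consists of citing Theorem \ref{thm-sl2-action} together with the standard fact that in a (locally finite-dimensional) $\mathfrak{sl}_2$-module the $j$-th power of the raising (or lowering) operator is an isomorphism between the weight spaces of weights $+j$ and $-j$. One bookkeeping correction: the operator $\mathrm{deg}$ in Theorem \ref{thm-sl2-action} is the polyvector degree, not the $\C^*$-weight, so $n-\mathrm{deg}$ acts on $\mH^i(\Nt,\wedge^{n\mp j}T\Nt)^k$ by $\pm j$ for \emph{every} $k$ (your stated weights are the right ones, but the parenthetical identification of $\mathrm{deg}$ with $k$ would give $n-k$ instead and would not yield the componentwise statement); the refinement to each fixed $k$ then holds because $\tau\wedge$ and $\iota_\omega$ shift $k$ by $-2$ and $+2$ respectively, so for fixed $i$ the subspace with fixed $j+k$ is a finite-dimensional $\mathfrak{sl}_2$-submodule whose weight spaces are precisely the individual bigraded components, and the weight-space isomorphism $\tau^j\wedge$ restricts there to give $\mH^i(\Nt,\wedge^{n-j}T\Nt)^k\cong\mH^i(\Nt,\wedge^{n+j}T\Nt)^{k-2j}$.
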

\begin{proof}
Follows from the previous Theorem \ref{thm-sl2-action} and basic $\mathfrak{sl}_2$ representation theory.
\end{proof}

\begin{remark}
The statements of Theorem \ref{thm-sl2-action} and Corollary \ref{cor-sl2-iso} hold for many interesting holomorphic symplectic varieties that appear naturally in representation theory, such as Nakajima quiver varieties \cite{Na1}. The cotangent bundle of (partial) flag varieties in type A are special cases of quiver varieties. In the sequel \cite{LQ2}, we will discuss more examples in this family.
\end{remark}

Recall from \cite[Proposition 11]{BeLa} that $\zl_0:=\zl_0(u_q(\g))$ contains as a subalgebra two copies of the coinvariant algebra intersecting in a $1$-dimensional space.
The first copy sits in $\zl_0$ as the left most column in Table \ref{table-Hodge} and forms a genuine subalgebra. It is isomorphic to the pull-back of the usual cohomology ring of the flag variety $\mH^\bullet (X,\C)$ into the zeroth Hochschild cohomology of $\Nt$. The second copy, consisting of nilpotent elements, coincides with the bottom row of the formal Hodge diamond and constitutes the radical of $\zl_0$. The coinvariant algebra can be combinatorially defined as  
\begin{equation}\label{eqn-coinv-alg-type-A}
\mathrm{C}_\g:=\frac{S^\bullet (\h)}{S^\bullet(\h)_+^{W}},
\end{equation}
where $\h$ is a Cartan subalgebra in $\g$ and $W$ is the Weyl group of $\g$.

Using Theorem \ref{thm-sl2-action}, one obtains a larger subalgebra in the center, whose structure is relatively easy to describe.  The Poisson bivector field weaves  together these two copies of the coinvariant algebra. 
Let us set $\mathrm{deg}(\h)=1$ for the next statement.

\begin{corollary}\label{cor-big-subalg} The principal block of the center for the small quantum group $u_q(\g)$ contains the following subalgebra $\mathrm{\tau C}_\g$ generated by the coinvariant subalgebra $\mathrm{C}_\g$ and the Poisson bivector field $\tau$:
\[
\mathrm{\tau C}_\g:=\frac{\mathrm{C}_\g[\tau]}{\langle f\tau^k|f\in \mathrm{C}_\g, ~\mathrm{deg}(f)+k>\mathrm{dim}_\C (X)\rangle}.
\]
\end{corollary}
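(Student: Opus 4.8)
The plan is to realize $\mathrm{\tau C}_\g$ as the image of a natural algebra homomorphism into $\zl_0$ and compute its kernel. First I would use Theorem \ref{Hoh} to regard $\zl_0 \cong \bigoplus_{i+j+k=0}\mathrm{H}^i(\Nt,\wedge^jT\Nt)^k$ as a bigraded commutative algebra, where the multiplication is induced by the wedge product on poly-vector fields. Inside it, Borel's Theorem \ref{thm-flag-cohomology} together with the pullback $\op{pr}^*:\mathrm{H}^\bullet(X)\to \mathrm{H}^\bullet(\Nt,\mathcal{O}_\Nt)$ — i.e. the $k=j=0$ edge of the diamond — embeds the coinvariant algebra $\mathrm{C}_\g = S^\bullet(\h)/S^\bullet(\h)^W_+$ as the leftmost column of Table \ref{table-Hodge}, sitting in bidegrees $(i,j,k)=(i,0,0)$ with $0\le i\le n$; this is the subalgebra identified in \cite[Proposition 11]{BeLa}. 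By Lemma \ref{lemma-Poisson}, the class $\tau\in \mathrm{H}^0(\Nt,\wedge^2T\Nt)^{-2}$ is a well-defined element of $\zl_0$, of homological-and-internal bidegree $(0,2,-2)$, hence of total degree $0$. Since $\zl_0$ is graded-commutative and $\tau$ has even total degree, there is a well-defined algebra map
\[
\Phi:\mathrm{C}_\g[\tau]\lra \zl_0,\qquad f\otimes \tau^k\mapsto \op{pr}^*(f)\wedge \tau^k,
\]
and the corollary amounts to the two claims: (i) the kernel of $\Phi$ is exactly the ideal $\langle f\tau^k \mid \mathrm{deg}(f)+k>n\rangle$, where $\mathrm{deg}(\h)=1$ and $\mathrm{deg}(\tau)=1$ in this normalization; and (ii) $\mathrm{C}_\g[\tau]/\ker\Phi$ is the stated presentation $\mathrm{\tau C}_\g$.

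The heart of the argument is step (i). A degree-$r$ element $f\in\mathrm{C}_\g$ lives in $\mathrm{H}^r(X,\Omega_X^r)$, so $\op{pr}^*(f)\in \mathrm{H}^r(\Nt,\wedge^r T\Nt)^{-r}$ (using $\Omega_\Nt\cong T_\Nt$ and the degree bookkeeping $\op{deg}_k\Omega_X=-2$... more precisely $\op{pr}^*(f)$ has internal degree $0$ coming from the horizontal directions), and $\op{pr}^*(f)\wedge\tau^k\in \mathrm{H}^r(\Nt,\wedge^{r+2k}T\Nt)^{-2k}$, which indeed sits in total degree $0$. To pin down when this is zero, I would invoke Corollary \ref{cor-sl2-iso}: wedging with $\tau^k$ gives an isomorphism $\mathrm{H}^i(\Nt,\wedge^{n-k}T\Nt)^m \xrightarrow{\ \sim\ }\mathrm{H}^i(\Nt,\wedge^{n+k}T\Nt)^{m-2k}$. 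Thus the $\mathfrak{sl}_2$-representation theory of Theorem \ref{thm-sl2-action} tells us precisely that $\tau^k\wedge(-)$ is injective on the piece $\mathrm{H}^i(\Nt,\wedge^{n-k}T\Nt)^m$ but that the image of $\tau$ eventually becomes a lowest-weight-killing operation once the homological column index $j=r+2k$ would exceed... the relevant bound is that $\op{pr}^*(f)$ with $\mathrm{deg}(f)=r$ generates a string under $\tau\wedge$ which survives exactly while $r + k \le n$, i.e. $\tau^k\wedge\op{pr}^*(f)\ne 0 \iff r+k\le n$, because the $\mathfrak{sl}_2$-string through the leftmost-column class $\op{pr}^*(f)\in\mathrm{H}^r(\wedge^rT\Nt)^{-r}$ has length controlled by its weight, and that class is a highest weight vector of weight $n-2r$ for the $\mathfrak{sl}_2$ generated by $(\iota_\omega,\tau\wedge)$ after the degree normalization. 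This identifies $\ker\Phi$ with the span of monomials $f\tau^k$, $f$ of degree $r$, with $r+k>n$, which is exactly the stated ideal after renaming $\mathrm{deg}(\tau)=1$.

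Finally, for step (ii) I would check that the surviving monomials $\{\op{pr}^*(f)\wedge\tau^k : f \text{ a monomial basis element of }\mathrm{C}_\g,\ \mathrm{deg}(f)+k\le n\}$ are linearly independent in $\zl_0$, again via the injectivity statements from Corollary \ref{cor-sl2-iso} applied column by column: within a fixed column $j=r+2k$ (equivalently a fixed antidiagonal of the diamond), distinct pairs $(f,k)$ with $\mathrm{deg}(f)=r$ land in distinct internal degrees $-2k$ unless $r$ is also distinct, and the restriction to each bidegree is injective by the $\mathfrak{sl}_2$-module structure. Hence $\Phi$ factors through an injection of $\mathrm{\tau C}_\g$ into $\zl_0$, completing the proof. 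The main obstacle I anticipate is making the weight/degree bookkeeping in step (i) airtight: one must reconcile the three gradings (homological index $i$, exterior power $j$, and $\mathbb{C}^*$-weight $k$) with the two gradings on $\mathrm{C}_\g[\tau]$, and verify that the leftmost-column classes $\op{pr}^*(f)$ are genuinely highest-weight vectors for the $\mathfrak{sl}_2$ of Theorem \ref{thm-sl2-action} — equivalently that $\iota_\omega$ annihilates the entire image of $\op{pr}^*$, which should follow because $\op{pr}^*(\mathrm{H}^\bullet X)$ consists of classes pulled back from the base, hence contain no vertical tangent directions for $\iota_\omega$ to contract.
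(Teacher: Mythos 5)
Your overall strategy is the same as the paper's: the proof given there is precisely the observation that wedging with the appropriate powers of $\tau$ is injective on the first column of Table \ref{table-Hodge}, via Corollary \ref{cor-sl2-iso} (plus the fact, implicit from degree reasons, that the stated relations hold). However, your degree and weight bookkeeping is wrong in ways that, taken literally, break the key step. The copy of $\mathrm{C}_\g$ inside $\zl_0$ is the leftmost column $\mH^r(\Nt,\wedge^r T\Nt)^{-2r}$, realized by purely \emph{vertical} polyvector classes through $T_{vert}\cong \op{pr}^*\Omega_X$, each vertical factor carrying internal degree $-2$; it is not in internal degree $-r$, and certainly not in internal degree $0$ ``coming from the horizontal directions''. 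With your degrees, $\op{pr}^*(f)\wedge\tau^k$ would sit in $\mH^r(\wedge^{r+2k}T\Nt)^{-2k}$, i.e.\ in total degree $2r$, hence outside $\op{HH}^0(\ul_0)$ altogether; correctly it lies in $\mH^r(\Nt,\wedge^{r+2k}T\Nt)^{-2(r+k)}$, of total degree $0$. Relatedly, your reason why $\iota_\omega$ annihilates the image of $\op{pr}^*$ is backwards: these classes contain \emph{only} vertical directions, and the contraction vanishes because they contain no horizontal factor — or, more robustly, because the target sheaf $\mathrm{pr}_*(\wedge^{r-2}T\Nt)^{-2(r-1)}\cong G\times_B V_{r-2}^{-2(r-1)}$ is zero, the minimal internal degree of $V_{r-2}$ being $-2(r-2)$.

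The more serious slip is the weight: under $[\iota_\omega,\tau\wedge]=n-\mathrm{deg}$, a class in $\mH^r(\wedge^r T\Nt)^{-2r}$ has weight $n-r$, not $n-2r$. With weight $n-2r$ your string-length argument would give $\tau^k\wedge\op{pr}^*(f)=0$ exactly when $2\,\mathrm{deg}(f)+k>n$, i.e.\ the wrong ideal, contradicting the survival condition $\mathrm{deg}(f)+k\le n$ that you assert and that the corollary requires; so as written the justification does not support your conclusion. With the correct weight $n-r$ the argument closes: every element of the column is primitive (killed by $\iota_\omega$, as above), hence $\tau^k\wedge(-)$ is injective on it for $k\le n-r$; equivalently, and this is the paper's route, Corollary \ref{cor-sl2-iso} with $j=n-r$ says $\tau^{n-r}\wedge(-)$ is an isomorphism from $\mH^r(\wedge^rT\Nt)^{-2r}$ onto $\mH^r(\wedge^{2n-r}T\Nt)^{-2n}$, so all intermediate powers are injective. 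Note also that the vanishing half of your step (i) needs no $\mathfrak{sl}_2$ argument: for $\mathrm{deg}(f)+k>n$ the product would lie in $\mH^i(\wedge^jT\Nt)^{-i-j}$ with $i+j=2(\mathrm{deg}(f)+k)>2n$, which vanishes for degree reasons since $V_j$ has no components of internal degree below $-2\min(j,n)$. Your bidegree-separation argument for linear independence of the surviving monomials is fine once the degrees are corrected, since $(i,j)=(r,r+2k)$ determines $(r,k)$.
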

\begin{proof}
This follows from the injectivity of wedging with the appropriate powers of $\tau$ with elements lying on the first column of Table \ref{table-Hodge} (Corollary \ref{cor-sl2-iso}).
\end{proof}

From the work of Kerler \cite{Ker}, it is known that $ \mathrm{\tau C}_\g $ coincides with the entire principal block of the center when $\g=\mathfrak{sl}_2$. On the other hand, Theorem \ref{thm-sl3} shows that this subalgebra has codimension one in $\zl_0(\mathfrak{sl}_3)$.

\subsection{Conjectures} \label{sec-conj} 
In this section, we formulate several conjectures generalizing the case $\g = \mathfrak{sl}_3$.

The following conjecture comes from the explicit computations of several examples in type $A$, as well as some singular block computations which will appear in a sequel \cite{LQ2}.

\begin{conjecture}  \label{cong-triv-modules} Under the natural $SL_m(\C)$-action, the sheaf cohomology 
$
\mH^i(\Nt, \wedge^j T\Nt)^{-i-j} 
$
decomposes as a direct sum of trivial $SL_m(\C)$-modules for all $i,j \geq 0$ such that $i+j$ is even. 
\end{conjecture}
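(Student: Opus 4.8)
The plan is to reduce the statement to a cohomology vanishing for a family of ``classical'' bundles on $X=SL_m(\C)/B$, and then to attack the latter with Bott's Theorem~\ref{Bott_rel_lie} and the BGG complex of Proposition~\ref{rel_lie_BGG}. First I would filter $\wedge^j T\Nt$ by iterating the exact sequence \eqref{vert_hor}, $0\to T_{vert}\to T\Nt\to T_{hor}\to 0$ with $T_{vert}\cong \op{pr}^*\Omega_X$ and $T_{hor}\cong \op{pr}^*T_X$, whose associated graded consists of the pullbacks $\op{pr}^*(\Omega_X^a\otimes\wedge^b T_X)$ with $a+b=j$. Pushing down to $X$ by Corollary~\ref{cor-exterior-product} and keeping track of $\C^*$-weights (generators coming from $\n$ have degree $-2$, those coming from $\lu$ regarded as horizontal directions have degree $0$, and $S^p(\lu)$ has degree $2p$), the degree $-(i+j)$ part of this associated graded is $\bigoplus_{a\ge (i+j)/2}G\times_B(\wedge^a\n\otimes\wedge^{j-a}\lu\otimes S^{a-(i+j)/2}\lu)$. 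The associated spectral sequence then realizes $\mH^i(\Nt,\wedge^j T\Nt)^{-(i+j)}$ as a subquotient of $\bigoplus_a \mH^i(X,\Omega_X^a\otimes\wedge^{j-a}T_X\otimes S^{a-(i+j)/2}T_X)$, and for each contributing index the cohomological degree $i$, the number $c:=j-a$ of cotangent factors, and the symmetric power $d:=a-(i+j)/2$ are constrained by $i=a-c-2d$. Since a subquotient of a sum of trivial $SL_m$-modules is again trivial, Conjecture~\ref{cong-triv-modules} reduces to the following.

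\emph{Key Lemma (to be proved).} For $G=SL_m(\C)$ and all integers $a,c,d\ge 0$, the group $\mH^{a-c-2d}(X,\ \Omega_X^a\otimes\wedge^c T_X\otimes S^d T_X)$ is a direct sum of trivial $G$-modules.

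Its $c=d=0$ instance is Borel's computation of $\mH^a(X,\Omega^a_X)$ (Theorem~\ref{thm-flag-cohomology}), and its $a=c,\ d=0$ instance reads $\mH^0(X,{\mathcal End}_{\mathcal{O}_X}(\wedge^a T_X))=\C$, generalizing Lemma~\ref{lemma-one-diml-section-End}. In general I would apply Proposition~\ref{rel_lie_BGG} with $E:=\wedge^a\n\otimes\wedge^c\lu\otimes S^d\lu$: the multiplicity of a dominant $L_\lambda$ in $\mH^\bullet(X,G\times_B E)$ equals the cohomology of the finite complex with terms $\bigoplus_{l(w)=\bullet}E[w\cdot\lambda]$ and differential the left $U(\n)$-action assembled as in \eqref{eqn-boundary-map}. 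One must show that for every dominant $\lambda\neq 0$ this complex is exact in homological degree $a-c-2d$. Written out in the Chevalley basis, $E$ is $\N$-graded by the number of $\lu$-factors and its weights are explicit sums of roots, so the whole problem becomes a combinatorial statement about the type-$A$ root system together with the BGG signs $e(w,w')\in\{\pm 1\}$ --- and this combinatorial exactness, uniform in $a,c,d$ and in the rank $m$, is the part I expect to be the main obstacle.

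I see three plausible but non-routine routes past it. First, one could try to obtain a closed Bott-type formula for $\mH^\bullet(X,\Omega_X^a\otimes\wedge^c T_X\otimes S^d T_X)$, in the spirit of known results on cohomology of Schur functors of the (co)tangent bundle of a flag variety, and then read off that the ``matched'' degree $a-c-2d$ contributes only copies of $L_0$. Second, one could induct on $d$, peeling off one linear form at a time through $0\to \n\otimes S^{d-1}\lu\to \lu\otimes S^{d-1}\lu\to S^d\lu\to 0$ (and similarly reduce $a$ or $c$); the difficulty here is that the intermediate terms need not individually satisfy the degree-matching hypothesis, so one must follow the off-diagonal contributions and show that they cancel. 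Third, and more indirectly, one could first establish the main Conjecture~\ref{conj-DC}: since Haiman's $\mathrm{DC}_m$ is only an $S_m$-module and carries no $SL_m$-action, an $SL_m$-equivariant refinement of the bigraded isomorphism $\zl_0(u_q(\mathfrak{sl}_m))\cong\mathrm{DC}_m$ (with $SL_m$ acting trivially on the right) would give Conjecture~\ref{cong-triv-modules} at once --- though this subsumes rather than reduces the present statement.

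Finally, two reductions are available for free, and I would use them to organize the computation even though they do not settle it by themselves. Corollary~\ref{cor-half-reduction} folds the formal Hodge diamond along its anti-diagonal, and the $\mathfrak{sl}_2$-action of Theorem~\ref{thm-sl2-action} and Corollary~\ref{cor-sl2-iso}, being generated by $\tau\in\mH^0(\Nt,\wedge^2 T\Nt)^{-2}\cong L_0$ (Lemma~\ref{lemma-Poisson}) and hence $G$-equivariant, shows that the whole $i$-th string $\bigoplus_j\mH^i(\Nt,\wedge^j T\Nt)^{-i-j}$ is a sum of trivial modules as soon as its primitive ($\iota_\omega$-annihilated) part is. Those primitive classes lie in $\mH^i(\Nt,\wedge^j T\Nt)^{-i-j}$ with $i\le j\le n:=\dim_\C X$, and the bottom of each string, $\mH^i(\Nt,\wedge^i T\Nt)^{-2i}\cong\mH^i(X,\Omega^i_X)$, is already trivial by Borel; so the genuine content --- and, under either packaging above, the same combinatorial core --- is the triviality of the primitive classes for $i<j\le n$.
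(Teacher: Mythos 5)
There is no proof in the paper to compare against: the statement you are addressing is stated as Conjecture~\ref{cong-triv-modules}, and the paper's only support for it is explicit verification in low rank (the $\mathfrak{sl}_2$, $\mathfrak{sl}_3$ and $\mathfrak{sl}_4$ tables, Proposition~\ref{prop-diagonal}, plus singular-block computations deferred to \cite{LQ2}), not a general argument. Your proposal is likewise not a proof, and you say so yourself: everything is funnelled into the ``Key Lemma'' that $\mH^{a-c-2d}(X,\Omega_X^a\otimes\wedge^cT_X\otimes S^dT_X)$ consists of trivial $SL_m$-modules, and the combinatorial exactness of the BGG complexes needed for it (uniformly in $a,c,d,m$, for all dominant $\lambda\neq0$) is exactly the part you leave open. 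None of the three routes you list is carried out, and the third (deducing the statement from an $SL_m$-equivariant refinement of Conjecture~\ref{conj-DC}) assumes something strictly stronger than what is to be shown. So the genuine gap is simply that the central assertion is never established; what you have is a conditional reduction.

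That said, the reduction itself is sound and worth recording. Filtering $\wedge^jT\Nt$ by iterating \eqref{vert_hor}, pushing forward (exact by Lemma~\ref{lemma-affineness-pr}) and isolating the $\C^*$-degree $-(i+j)$ part does give a finite $G$-equivariant filtration of $\pr_*(\wedge^jT\Nt)^{-(i+j)}$ with graded pieces $\Omega_X^a\otimes\wedge^{j-a}T_X\otimes S^{a-(i+j)/2}T_X$, your bookkeeping $i=a-c-2d$ checks out, and since $\mathrm{Rep}(SL_m)$ is semisimple a subquotient of a sum of trivial modules is again trivial; the Key Lemma is also consistent with every entry of the paper's $\mathfrak{sl}_3$ table (e.g.\ the nontrivial modules $L_\rho^{\oplus2}$ in $\mH^1(\Omega\otimes T)$ occur only off the matched degree $a-c-2d$). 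Your closing observation that the $\mathfrak{sl}_2$-action of Theorem~\ref{thm-sl2-action}, being generated by the $G$-invariant class $\tau$, reduces the question to primitive classes is also a legitimate organizing device. But the paper offers no argument for the conjecture, and neither, as it stands, does your proposal; to turn it into one you would have to actually prove the Key Lemma (for instance via a Bott-type formula for cohomology of Schur functors of $T_X$ and $\Omega_X$ on the type-$A$ flag variety), which is where the real difficulty lies.
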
 

Assuming Conjecture \ref{cong-triv-modules}, we would have the following statement 
that gives a purely algebraic description of   the principal block center $\zl_0$ of the small quantum $\mathfrak{sl}_m$. 

\begin{corollary} \label{cor-algebra-structure} In type $A$, there is an isomorphism of bigraded vector spaces
\[  
\zl_0 \cong \bigoplus_{i+j+k=0} \mH^i(\n, V_j^{k})^\h  , 
\]
where the $B$-modules 
\[ 
V_j = \wedge^j_{S^\bullet(u)}V_1 \quad \quad {\rm and} \quad \quad 
V_1:= \dfrac{S^\bullet(\lu) \otimes \g \oplus S^\bullet(\lu)\otimes \n }{\Delta(S^\bullet(\lu)\otimes \lb)} 
\]
are described in Theorem \ref{thm-equ-structure-TN} and Corollary \ref{cor-exterior-product}. 
The $k$-degrees of the components are as follows: $\op{deg}(\g) = \op{deg}(\lb) = 0$, $\op{deg}(\n) = -2$ 
and $\op{deg}(\lu) = 2$. 
\end{corollary}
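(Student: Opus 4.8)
The plan is to chain together the results of Section~\ref{sec-Springer}, using Conjecture~\ref{cong-triv-modules} precisely at the one point where it is needed. Start from Theorem~\ref{Hoh}, which gives $\zl_0\cong\bigoplus_{i+j+k=0}\mH^i(\Nt,\wedge^jT\Nt)^k$. Since $\op{pr}:\Nt\lra X$ is affine (Lemma~\ref{lemma-affineness-pr}), the higher direct images $R^{>0}\op{pr}_*$ of any quasi-coherent sheaf vanish, so the Leray spectral sequence degenerates and $\mH^i(\Nt,\wedge^jT\Nt)^k\cong\mH^i(X,\op{pr}_*(\wedge^jT\Nt)^k)$. By Corollary~\ref{cor-exterior-product} the latter bundle is $G\times_B V_j^k$, where each homogeneous piece $V_j^k$ is a \emph{finite-dimensional} $B$-module (even though $V_j$ itself is not), $\h$-diagonalizable since it sits inside a tensor/exterior construction built from the $\h$-weight spaces of $\g$, $\n$ and $\lu$. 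Thus $\mH^i(\Nt,\wedge^jT\Nt)^k\cong\mH^i(X,G\times_B V_j^k)$.

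Next I would apply Bott's Theorem~\ref{Bott_rel_lie} with the trivial $G$-module $W=L_0=\C$, which yields
\[
\op{Hom}_G\left(\C,\;\mH^i(X,G\times_B V_j^k)\right)\cong\mH^i(\lb,\h,V_j^k).
\]
The left-hand side is, as a vector space, the multiplicity space of the trivial representation inside $\mH^i(X,G\times_B V_j^k)$. Here is where Conjecture~\ref{cong-triv-modules} enters: for $k=-i-j$ the $SL_m(\C)$-module $\mH^i(\Nt,\wedge^jT\Nt)^{-i-j}\cong\mH^i(X,G\times_B V_j^{-i-j})$ is a direct sum of trivial modules, hence it \emph{equals} its own trivial-isotypic component, so
\[
\mH^i(\Nt,\wedge^jT\Nt)^{-i-j}\cong\op{Hom}_G\left(\C,\mH^i(X,G\times_B V_j^{-i-j})\right)\cong\mH^i(\lb,\h,V_j^{-i-j}).
\]
Finally, apply Lemma~\ref{H0_H1_rel_lie} (valid because $V_j^{-i-j}$ is finite-dimensional and $\h$-diagonalizable) to identify $\mH^i(\lb,\h,V_j^{-i-j})\cong\mH^i(\n,V_j^{-i-j})^\h$. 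Summing over all triples with $i+j+k=0$, and noting that every step respects the two gradings (the exterior degree $j$ and the $\C^*$-weight $k$, with $\op{deg}(\g)=\op{deg}(\lb)=0$, $\op{deg}(\n)=-2$, $\op{deg}(\lu)=2$ as recorded in the Summary of Section~\ref{sec-Springer}), gives the claimed bigraded isomorphism $\zl_0\cong\bigoplus_{i+j+k=0}\mH^i(\n,V_j^k)^\h$.

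The genuinely nontrivial input is Conjecture~\ref{cong-triv-modules}: without it, Bott's theorem only computes the \emph{multiplicity of the trivial summand}, whereas the statement requires that no nontrivial $SL_m(\C)$-isotypic components occur in the relevant bigraded pieces. Everything else is a routine assembly of cited results, the only care being the bookkeeping that the finite-dimensionality and $\h$-semisimplicity hypotheses of Lemma~\ref{H0_H1_rel_lie} and the affineness hypothesis for the degeneration $\mH^i(\Nt,-)\cong\mH^i(X,\op{pr}_*(-))$ are satisfied for each bigraded component $V_j^k$, and that the two gradings are transported compatibly through the isomorphisms of Theorem~\ref{thm-equ-structure-TN} and Corollary~\ref{cor-exterior-product}.
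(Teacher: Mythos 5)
Your proposal is correct and assembles exactly the chain the paper intends: Theorem \ref{Hoh}, affineness of $\op{pr}$ with Corollary \ref{cor-exterior-product}, Bott's Theorem \ref{Bott_rel_lie} with $W=L_0$ together with Lemma \ref{H0_H1_rel_lie}, and Conjecture \ref{cong-triv-modules} to upgrade the trivial-isotypic multiplicity space to the full cohomology (the odd-$k$ pieces vanishing automatically). This matches the paper's argument, including the correct identification of the conjecture as the only nontrivial input.
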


Recall that Haiman's diagonal coinvariant algebra (see \cite{Hai}) in type $A_{m-1}$ is by definition
\begin{equation}\label{eqn-diag-coinv-alg}
\mathrm{DC}_m:=\dfrac{\C[x_1,\cdots, x_m, y_1,\dots, y_m]}{\C[x_1,\dots,x_m, y_1,\dots, y_m]^{S_m}_+}.
\end{equation}  
Any element $\sigma \in S_m$ acts on $\mathrm{DC}_m$ by simultaneously sending $x_i$ to $x_{\sigma(i)}$ and $y_i$ to $y_{\sigma(i)}$. Equip $\mathrm{DC}_m$ with the bidegree
\begin{equation}
\mathrm{deg}(x_i):=(1,0),\quad \mathrm{deg}(y_i):=(0,1)\quad (i=1,\dots, m).
\end{equation}
Then  each bigraded homogeneous component $\mathrm{DC}_m^{(i,j)}$ of bidegree $(i,j)$ is preserved under the symmetric group action. The usual coinvariant algebra $\mathrm{C}_m:=\mathrm{C}_{\mathfrak{sl}_m}$ can be recovered from $\mathrm{DC}_m$ by specializing either the $x$ or $y$ to zero. 
   
\begin{conjecture}\label{conj-DC}
In type $A$, the center of the principal block $\zl_0:=\zl_0(\mathfrak{sl}_m) $ for the small quantum group $u_q(\mathfrak{sl}_m)$ is isomorphic, up to a bigrading transformation, to the diagonal coinvariant algebra as a bigraded vector space. More precisely:
\begin{itemize}
\item[(1)] There exists a symmetric group $S_m$ action on $\zl_0$, extending the action of $S_m$ on the coinvariant subalgebra $\mathrm{C}_m\subset \zl_0$.
\item[(2)] The symmetric group action commutes with the action of $\mathfrak{sl}_2$ constructed in Theorem \ref{thm-sl2-action}.
\item[(3)] As a bigraded representation of $S_m$, there is an isomorphism of representations
\[
\zl_0\cong \mathrm{DC}_m\otimes \mathrm{sgn},
\]
where $\mathrm{sgn}$ stands for the $1$-dimensional sign representation of the symmetric group sitting in bidegree $(0,0)$. The bigradings are matched as follows: for any $i,j\in \N$, 
\[
\zl_0^{i,j}=\mH^{i}(\Nt,\wedge^jT\Nt)^{-i-j}\cong \left(\mathrm{DC}_m\otimes \mathrm{sgn}\right)^{{m\choose 2}-\frac{i+j}{2},\frac{j-i}{2}},
\]
where ${m \choose 2}$ is the complex dimension of the flag variety $X=SL_m(\C)/B$.
\end{itemize}
In particular, the dimension of the principal block of the  center is equal to
$$\mathrm{dim}(\zl_0(\mathfrak{sl}_m))=\mathrm{dim}(\mathrm{DC}_m)=(m+1)^{m-1}.$$
\end{conjecture}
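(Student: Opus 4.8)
}
The plan is to split the conjecture into a \emph{dimension/character} part (establishing $\zl_0\cong\mathrm{DC}_m\otimes\mathrm{sgn}$ as bigraded vector spaces, whence $\dim\zl_0=(m+1)^{m-1}$) and a \emph{symmetry} part (constructing the $S_m$-action and checking that it extends the tautological action on $\mathrm{C}_m\subset\zl_0$ and commutes with the $\mathfrak{sl}_2$-action of Theorem \ref{thm-sl2-action}). The first step toward the dimension part is to prove Conjecture \ref{cong-triv-modules}: that each $\mH^i(\Nt,\wedge^jT\Nt)^{-i-j}$ is a sum of trivial $SL_m(\C)$-modules. Once this holds, Theorem \ref{Hoh}, Corollary \ref{cor-exterior-product} and Proposition \ref{rel_lie_BGG} furnish the purely algebraic model of Corollary \ref{cor-algebra-structure}, $\zl_0\cong\bigoplus_{i+j+k=0}\mH^i(\n,V_j^k)^{\h}$, reducing everything to relative Lie algebra cohomology of the explicit $\lb$-modules $V_j$ of \eqref{eqn-structure-V-star}. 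To establish Conjecture \ref{cong-triv-modules} I would filter each $\pr_*(\wedge^jT\Nt)^k$ $B$-equivariantly by tensor bundles $\Omega^r_X\otimes\wedge^sT_X$ (as in the $\mathfrak{sl}_3$ computations), apply Borel--Weil--Bott (Theorem \ref{BWB}) to see that only finitely many $L_\lambda$ with $\lambda$ in a shifted Weyl orbit of a weight of $\lu$-type could occur, and then rule all of them out on the diagonal $i+j+k=0$ by a positivity argument on the weights of $V_j^k$, in the spirit of the vanishing lemma following Lemma \ref{lemma-Poisson}.

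Next I would compute the bigraded dimensions $h^{i,j}=\dim\mH^i(\Nt,\wedge^jT\Nt)^{-i-j}$ and assemble the Poincar\'e series $P(q,t)=\sum_{i,j}h^{i,j}q^it^j$. The Koszul-type presentation \eqref{eqn-structure-V-star} of $V_\star$ as the exterior algebra over $S^\bullet(\lu)$ of a rank-$2n$ free module modulo $\Delta(S^\bullet(\lu)\otimes\lb)$, fed into the BGG complex of Proposition \ref{rel_lie_BGG}, yields a finite complex whose cohomology computes $P(q,t)$; the $\mathfrak{sl}_2$-symmetry of Theorem \ref{thm-sl2-action} (together with Corollary \ref{cor-half-reduction}) cuts the range of $(i,j)$ that must be examined roughly in half and forces the shape of the answer. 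One would then match $P(q,t)$, after the bidegree substitution $(i,j)\mapsto(\binom{m}{2}-\tfrac{i+j}{2},\tfrac{j-i}{2})$ and the sign twist, with the bigraded Hilbert series of $\mathrm{DC}_m$, which by Haiman's work \cite{Hai} is the parking-function generating series (equivalently $\langle\nabla e_m,\cdot\rangle$); specializing $q=t=1$ then gives $\dim\zl_0=(m+1)^{m-1}$. I expect this step to be lengthy but essentially combinatorial.

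The main obstacle is part (1): producing the $S_m$-action itself. I would seek it geometrically rather than intrinsically on the quantum side, by realizing $\Nt=T^*(SL_m(\C)/B)$ within the family of Nakajima quiver varieties that also contains the Hilbert schemes $\mathrm{Hilb}^m(\C^2)$, transporting the poly-vector field cohomology computation to global sections of the Procesi bundle on $\mathrm{Hilb}^m(\C^2)$, and importing Haiman's $S_m$-equivariant geometric incarnation of $\mathrm{DC}_m$. An alternative route is the Frenkel--Gaitsgory equivalence \cite{FG} with affine category $\mathcal{O}$ at the critical level, where a $W$-action on the relevant objects is available through wall-crossing/intertwining functors. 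Either route would upgrade the vector-space isomorphism above to the claimed isomorphism of bigraded $S_m$-representations, and compatibility with the $\mathfrak{sl}_2$-action would be automatic, since wedging with the Poisson bivector $\tau$ and contracting with $\omega$ are manifestly natural once the $S_m$-action is geometric. I regard the compatible identification of these specific cohomology groups of $\Nt$ with Procesi-bundle sections on $\mathrm{Hilb}^m(\C^2)$, matching all three gradings simultaneously, as the genuine difficulty; the dimension formula $(m+1)^{m-1}$ is then a consequence of Haiman's theorem.
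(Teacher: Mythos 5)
The statement you are addressing is a \emph{conjecture} in the paper: the authors do not prove it, they support it by the explicit computations for $m=2,3,4$ (the $\mathfrak{sl}_2$ remark, Theorem \ref{thm-sl3}, Theorem \ref{conj-sl4}) together with heuristic remarks --- the expected $S_m$-action coming from the zeroth Hochschild cohomology of the Steinberg variety $\Nt\times_{\mathcal N}\Nt$ via Riche's braid group action, the commutation with the $\mathfrak{sl}_2$-action of Theorem \ref{thm-sl2-action} explained by the Lagrangian nature of the Steinberg components (which kills $(\tau,-\tau)$ and $(\omega,-\omega)$), the sign twist motivated by the antispherical module and Riche--Williamson, and the dimension $(m+1)^{m-1}$ quoted from Haiman, Gordon and Cherednik. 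Your proposal is therefore not being measured against an existing proof, and it does not itself close the conjecture: it is a research program in which every decisive step remains open.

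Concretely, three gaps would stop your argument. First, your route to Conjecture \ref{cong-triv-modules} via Borel--Weil--Bott and ``a positivity argument on the weights of $V_j^k$'' cannot work as stated: nontrivial isotypic components such as $L_\rho$ genuinely occur in the cohomology of the subquotient bundles on the diagonal $i+j+k=0$ (e.g.\ $\mH^1(\Omega\otimes T)\cong L_\rho^{\oplus 2}$ already for $\mathfrak{sl}_3$, entering the $j=2$ computation), and they disappear only after the connecting maps of the long exact sequences, resp.\ the differentials of the BGG complex, are analyzed --- which is exactly why the paper leaves triviality as a conjecture even in type $A$. Second, ``matching $P(q,t)$ with the Hilbert series of $\mathrm{DC}_m$'' for all $m$ is not a lengthy-but-routine combinatorial exercise; no closed formula for the $h^{i,j}$ is produced by the methods of Section \ref{sec-Springer} beyond case-by-case computation, and this matching is the heart of the open problem. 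Third, your construction of the $S_m$-action (via quiver varieties, the Procesi bundle on $\mathrm{Hilb}^m(\C^2)$, or Frenkel--Gaitsgory) is a genuinely different proposal from the paper's (Steinberg-variety convolution expected to factor through $S_m$ on $\mathrm{HH}^0$), but both are speculative; moreover your claim that compatibility with the $\mathfrak{sl}_2$-action ``would be automatic'' once the action is geometric is too quick --- naturality of $\tau\wedge(-)$ and $\iota_\omega$ requires the symmetry to preserve the symplectic/Poisson structure in the relevant sense, which is precisely the content of the paper's Lagrangian argument and would have to be verified in whichever geometric model you choose.
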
 

Below we list a few further remarks about the conjecture.

\begin{remark}
\begin{enumerate}
\item[(i)] The expected symmetric group action in Conjecture \ref{conj-DC} should come from taking the zeroth Hochschild cohomology $\mathrm{HH}^0$ of the Steinberg variety $\mathcal{Z}=\Nt\times_\mathcal{N}\Nt$, even though $\mathcal{Z}$ is singular and $\mathrm{HH}^0$ needs to be treated more carefully. By the result in \cite{Riche}, the convolution with the structure sheaves of components of $\mathcal{Z}$ gives rise to a braid group action on the $\C^*$-equivariant derived category of coherent sheaves on $\Nt$. We expect that, on the level of zeroth Hochschild cohomology, the action should factor through the symmetric group $S_m$.
The second part of the conjecture is then a consequence of the fact that the subvarieties $\mathcal{Z}_w$ are Lagrangian. This implies that the natural symplectic form $(\omega, -\omega)$ on $\Nt\times \Nt$, and the natural Poisson bivector field $(\tau,-\tau)$ vanish on $\mathcal{Z}_w$.  
\item[(ii)] There are two reasons for the sign character to appear in part (3) of the conjecture. The first reason is that, by inspection, the diagonal in the formal Hodge diamond is spanned by powers of the Poisson bivector field. These forms are invariant under the symmetric group action by part (1) of the conjecture. On the other hand, the diagonals in Haiman's coinvariant algebra consist of sign representations of the symmetric group. The second reason is that the principal block of the big quantum group at a root of unity categorifies the \emph{antispherical module} of the affine Hecke algebra: this is exactly the induced module of the affine Hecke algebra from the sign character of the finite Hecke subalgebra. A recent work of Riche and Williamson \cite{RW} has given a categorical explanation of this phenomenon in type $A$ for algebraic groups over finite characteristic fields via categorification. Their method also applies to (big) quantum groups at roots of unity. 
\item[(iii)] The last statement concerning the dimension of the center is a consequence of Haiman's character formula for $\mathrm{DC}_n$ in his proof of the $n!$ Theorem \cite{Hai2}. Other proofs of the character formula which generalize beyond type $A$ are given by Gordon in \cite{Gor} and Cherednik \cite{CheDiagonal} via representation theory of \emph{double affine Hecke algebras} (DAHA).  It is also a natural question to ask whether the symmetric group action and the $\mathfrak{sl}_2$ action can be integrated into a DAHA action on $\zl_0(\mathfrak{sl}_m)$, as done by Gordon and Cherednik. This question may be closely related to part (i) since the DAHA could be possibly realized as a deformed generalized cohomology theory of the Steinberg variety.
\item[(iv)] It is known that the commutative algebra structures on these vector spaces disagree. The socle of $\zl_0(\g)$ contains a copy of the coinvariant algebra $\mathrm{C}_\g=S^\bullet(\h)/S^\bullet (\h)^{W}_+$,  
which is identified with the horizontal bottom row in the formal Hodge diamond \eqref{table-Hodge}. In particular, when $m=3$, 
the socle of $\zl_0(\mathfrak{sl}_3)$ is at least six dimensional, while  the socle of $\mathrm{DC}_3$ is five dimensional.
\end{enumerate}
\end{remark}

\subsection{Further evidence}\label{sec-further-evidence}
In this section, we compute the center of the small quantum group  $u_q(\mathfrak{sl}_4)$. Let $G=SL_4(\C)$ and $B$ be its Borel subgroup of lower triangular matrices. The following result confirms the conjectures made in Section \ref{sec-conj}.

\begin{theorem}\label{conj-sl4}
The bigraded formal Hodge diamond for the principal block of small quantum $\mathfrak{sl}_4$ at a root of unity is given by
\begin{equation}\label{table-Hodge-sl4}
\begin{gathered}
\begin{array}{|c||c|c|c|c|c|c|c|} 
\hline
 {\scriptstyle i+j=0}        & 1          &            &            &            &            &            & \\ \hline 
 {\scriptstyle i+j=2}        & 3          &  1         &            &            &            &            & \\ \hline 
 {\scriptstyle i+j=4}        & 5          &  4         &  1         &            &            &            & \\ \hline
 {\scriptstyle i+j=6}        & 6          &  9         &  4         &  1         &            &            & \\ \hline 
 {\scriptstyle i+j=8}        & 5          &  11        & 9          &  4         &  1         &            & \\ \hline
 {\scriptstyle i+j=10}       & 3          &  8         & 11         &  9         &  4         &  1         & \\ \hline 
 {\scriptstyle i+j=12}       & 1          &  3         & 5          &  6         &  5         &  3         &  1 \\ \hline \hline
 {\scriptstyle h^{i,j}} & {\scriptstyle j-i=0} & {\scriptstyle j-i=2} & {\scriptstyle j-i=4} & {\scriptstyle j-i=6} & {\scriptstyle j-i=8} & {\scriptstyle j-i=10} & {\scriptstyle j-i=12} \\  \hline
\end{array}  
\end{gathered} \ ,
\end{equation}
where $h^{i,j}=\mathrm{dim}(\mH^i(\Nt,\wedge^jT\Nt)^{-i-j})$. Furthermore, the space $\mH^i(\Nt,\wedge^jT\Nt)^{-i-j}$ consists of trivial $G$-modules for each pair of $(i,j)$ appearing in the table.
\end{theorem}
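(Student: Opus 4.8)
The plan is to run, systematically, the algorithm assembled in Section~\ref{sec-Springer}. By Theorem~\ref{Hoh} it suffices to determine the bigraded vector spaces $\mH^i(\Nt,\wedge^jT\Nt)^{-i-j}$ for $G=SL_4(\C)$, where now $n=\dim X=\binom{4}{2}=6$ and $j$ runs over $0,\dots,12$. Two reductions cut the work roughly in half. First, Corollary~\ref{cor-half-reduction} gives the anti-diagonal reflection $\mH^i(\Nt,\wedge^jT\Nt)^{-i-j}\cong \mH^i(\Nt,\wedge^{12-j}T\Nt)^{-i-(12-j)}$, so it is enough to treat $j\le 6$. Second, Corollary~\ref{cor-sl2-iso} shows that, for each fixed $i$, the spaces $\bigoplus_j \mH^i(\Nt,\wedge^jT\Nt)^{-i-j}$ form a finite-dimensional $\mathfrak{sl}_2$-module under $\tau\wedge$ and $\iota_\omega$; since $\tau$ is $G$-invariant (Lemma~\ref{lemma-Poisson}), these are maps of $G$-modules, so both the dimension count and the $G$-triviality statement only have to be established for $j\le 6$, and the Lefschetz-type symmetry they impose, together with the anti-diagonal reflection, serves as an internal consistency check on the full table~\eqref{table-Hodge-sl4}.

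For $0\le j\le 6$, Corollary~\ref{cor-exterior-product} identifies $\pr_*(\wedge^jT\Nt)^{-i-j}\cong G\times_B V_j^{-i-j}$, where $V_j^{-i-j}$ is the finite-dimensional $B$-module obtained from the explicit presentation~\eqref{eqn-structure-V-star} of $V_\star=\wedge^\star_{S^\bullet(\lu)}V_1$ by extracting the $\C^*$-weight $-i-j$ part (recall $\deg\g=\deg\lb=0$, $\deg\n=-2$, $\deg\lu=2$). Concretely one writes down a basis of $V_j^{-i-j}$, records the $\lb$-action in terms of the map $\ad_\n$ (the $\mathfrak{sl}_4$ analogue of~\eqref{phi}) and the map $\Delta$ of~\eqref{eqn-diagonal-map} together with its wedge-twisted variants, and then applies Theorem~\ref{Bott_rel_lie} and Proposition~\ref{rel_lie_BGG}: the multiplicity of $L_\lambda$ in $\mH^i(X,G\times_B V_j^{-i-j})$ equals the dimension of the $i$-th cohomology of the complex $\bigoplus_{l(w)=\bullet}V_j^{-i-j}[w\cdot\lambda]$ whose differentials~\eqref{eqn-boundary-map} are left multiplications by the BGG elements $e(s_\beta w,w)\,u_\beta\in U(\n)$ up to sign. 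This requires a preliminary step: writing the $\mathfrak{sl}_4$ BGG resolution of $L_0$ explicitly as a complex of free $U(\n)$-modules — the $\mathfrak{sl}_4$ analogue of~\eqref{eqn-BGG-sl3-freemod} — including the non-simple-reflection elements $u_\beta$, verified to give a complex via the $\mathfrak{sl}_4$ Serre relations.

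For the triviality assertion one checks, for each pair $(i,j)$ with $i+j\le 12$ and $j\le 6$, that in $\C^*$-degree $-i-j$ the only dominant weight $\lambda\in\X^+$ whose shifted Weyl orbit $\{w\cdot\lambda\}$ meets the weight set of $V_j^{-i-j}$ is $\lambda=0$; hence each $\mH^i(X,G\times_B V_j^{-i-j})$ is a direct sum of copies of $L_0$, and $h^{i,j}$ is the dimension of the $i$-th cohomology of the $\lambda=0$ BGG complex $0\to V_j^{-i-j}[0]\to\bigoplus_{l(w)=1}V_j^{-i-j}[w\cdot 0]\to\cdots$. Carrying out these computations produces the entries of~\eqref{table-Hodge-sl4} with $j\le 6$; the entries with $j\ge 7$ are then filled in by Corollary~\ref{cor-half-reduction}, and their $G$-triviality follows from Corollary~\ref{cor-sl2-iso}. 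The small entries ($j=0,1$ and, for each $\wedge^jT\Nt$, the lowest $\C^*$-degree, which reduces via~\eqref{vert_hor} to $\mH^\bullet(X,\Omega_X^\bullet)$ and Theorem~\ref{thm-flag-cohomology}) can be read off directly; the intermediate entries, whose $B$-modules $V_j^k$ are large, are handled by a direct implementation of this algorithm.

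The main obstacle is purely the combinatorial size. For $\mathfrak{sl}_4$ one has $\dim\g=15$, $\dim\n=\dim\lu=6$, $\dim\lb=9$, and the modules $V_j^k$ — built from $\wedge^j$ of $S^\bullet(\lu)\otimes\g\oplus S^\bullet(\lu)\otimes\n$ modulo the ideal generated by $\Delta(S^\bullet(\lu)\otimes\lb)$ — reach dimensions in the hundreds for the central $(i,j)$, so computing bases, the $U(\n)$-action, the weight decompositions and the BGG differential maps by hand is infeasible; these steps are done with a computer implementation. A secondary point to be careful about is the exactness and sign bookkeeping in the $\mathfrak{sl}_4$ BGG resolution (the choice of the $u_\beta$), since an incorrect differential would corrupt every downstream cohomology group; this is pinned down once and checked against the Serre relations. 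Finally, the output table is cross-validated against the anti-diagonal reflection of Corollary~\ref{cor-half-reduction}, the Lefschetz-type symmetry of Corollary~\ref{cor-sl2-iso}, and the total dimension $\sum_{i,j}h^{i,j}=5^{3}=125$ predicted by Conjecture~\ref{conj-DC}.
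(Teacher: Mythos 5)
Your overall strategy is exactly the paper's algorithm (the equivariant presentation $\pr_*(\wedge^jT\Nt)^{-i-j}\cong G\times_B V_j^{-i-j}$ from Corollary \ref{cor-exterior-product}, Bott plus the explicit $\mathfrak{sl}_4$ BGG complex, the two symmetries, and machine computation for the large modules); the paper economizes slightly by computing only the column $h^{1,3},h^{2,4},h^{3,5},h^{4,6}$ directly and then combining subquotient upper bounds for $h^{1,5},h^{2,6}$ with injectivity of $\tau\wedge$ from Theorem \ref{thm-sl2-action}, rather than recomputing every $j\le 6$ entry, but that difference is cosmetic. However, there is a genuine gap in your triviality step. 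You propose to check that, for each relevant $(i,j)$, the only dominant $\lambda\in\X^+$ whose shifted orbit $\{w\cdot\lambda\}$ meets the weight set of $V_j^{-i-j}$ is $\lambda=0$, and to conclude from this that $\mH^i(X,G\times_B V_j^{-i-j})$ is a sum of copies of $L_0$. That check fails: nonzero linked dominant weights genuinely occur in these modules. Already for $\mathfrak{sl}_3$, $V_2^{-2}$ surjects onto $\lu\otimes\n$ (sequence \eqref{eqn-F1-B-module-sequence}), which contains the weight $\alpha_2-\alpha_1=s_1\cdot\rho$, and indeed $\mH^1(\Omega\otimes T)\cong L_\rho^{\oplus 2}$ in the paper's own table \eqref{table-1}; for $\mathfrak{sl}_4$ the weight $\alpha_1+\alpha_2-\alpha_3$ of $\lu\otimes\n$ (again a quotient of $V_2^{-2}$) equals $s_3\cdot(\omega_1+\omega_3)$ with $\omega_1+\omega_3$ dominant and nonzero. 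So your criterion cannot certify the triviality assertion, and without it your identification of $h^{i,j}$ with the dimension of the cohomology of the $\lambda=0$ BGG complex is also unjustified, since $h^{i,j}$ is the total dimension and any nontrivial isotypic component would enlarge it.

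What is needed instead is a statement tied to the specific cohomological degree $i$, not to the mere presence of weights: either filter $G\times_B V_j^{-i-j}$ by the subquotient bundles $\Omega^r_X\otimes\wedge^s T_X$ (and these by line bundles) and use Theorem \ref{BWB} to show that every nontrivial isotypic component of the cohomology dies in degree $i=-k-j$, which is Step (i) of the paper's proof; or, equivalently, run the BGG complex of Proposition \ref{rel_lie_BGG} separately for each linked nonzero dominant $\lambda$ (there are finitely many per $(i,j)$) and verify that its degree-$i$ cohomology vanishes. Once that vanishing is in place, your computation of the $\lambda=0$ complexes, together with Corollaries \ref{cor-half-reduction} and \ref{cor-sl2-iso} for the $j\ge 7$ entries and the consistency checks you list, does yield Table \eqref{table-Hodge-sl4}.
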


The proof consists of several steps. First, we have the following statement. 

\begin{proposition}\label{prop-diagonal}
The diagonal entries in Table \ref{table-Hodge-sl4} each consist of one copy of the trivial $\mathfrak{sl}_4$-representation:
\[
\mH^0(\Nt,\wedge^{2r}T\Nt)^{-2r}\cong L_0.
\]
\end{proposition}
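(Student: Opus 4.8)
The plan is to identify the diagonal entries $\mH^0(\Nt,\wedge^{2r}T\Nt)^{-2r}$ with powers of the Poisson bivector field $\tau$ and to prove that these are the only global sections in that bidegree. The first step is to recall from Lemma \ref{lemma-Poisson} that $\mH^0(\Nt,\wedge^2T\Nt)^{-2}\cong L_0$ is spanned by $\tau$. Since $\tau$ is a global section, so is $\tau^r:=\tau\wedge\cdots\wedge\tau\in \mH^0(\Nt,\wedge^{2r}T\Nt)^{-2r}$, and by Corollary \ref{cor-sl2-iso} (applied with $j=2r$, noting that $\wedge^{2r}T\Nt$ sits symmetrically opposite $\wedge^0 T\Nt$ in the $\mathfrak{sl}_2$-string through $\wedge^nT\Nt$ when $2r = n+j'$... more precisely, since wedging with $\tau^r$ is injective on $\mH^0(\Nt,\wedge^0T\Nt)^0\cong L_0$), we get $\tau^r\neq 0$. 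So $\mH^0(\Nt,\wedge^{2r}T\Nt)^{-2r}\supseteq L_0$, and it remains to prove the reverse inclusion, i.e. that this cohomology group is at most one-dimensional.

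For the upper bound, I would use Corollary \ref{cor-exterior-product}: $\mH^0(\Nt,\wedge^{2r}T\Nt)^{-2r}\cong \mH^0(X,G\times_B V_{2r}^{-2r})$. The key structural observation is that $V_{2r}^{-2r}$ is the \emph{lowest} nonzero graded piece of the module $V_{2r}=\wedge^{2r}_{S^\bullet(\lu)}V_1$ that contains any $\g$-generators, in the sense that combining $\mathrm{deg}(\n)=-2$, $\mathrm{deg}(\g)=0$, $\mathrm{deg}(\lu)=2$, the degree $-2r$ part of $\wedge^{2r}_{S^\bullet(\lu)}(S^\bullet(\lu)\otimes\g\oplus S^\bullet(\lu)\otimes\n)$ is spanned by terms $\xi_1\wedge\cdots\wedge\xi_{2r}$ where each $\xi_i\in\{\g,\n\}$ and the total number of $\n$-factors minus twice the number of $\lu$-factors from $S^\bullet(\lu)$ equals $-2r$; since there are exactly $2r$ wedge slots, the only way to reach degree $-2r$ is to take zero $\lu$-factors and to have the $\n$-contribution be $-2r$, forcing a careful bookkeeping. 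Actually the cleanest route is to observe that $V_{2r}^{-2r}$ has a filtration whose subquotients are among the bundles $\Omega_X^a\otimes\wedge^bT_X$ with $a+b=2r$ and $a\geq b$, mirroring the short exact sequences used in Section \ref{sec-sl3} (e.g. \eqref{eqn-F1-sheaf-sequence}, \eqref{eqn-ses-F3}), and to show via Bott's theorem (Theorem \ref{Bott_rel_lie}), or by the BGG complex of Proposition \ref{rel_lie_BGG} with $\lambda=0$, that $\mathrm{Hom}_G(L_0,\mH^0(X,\Omega_X^a\otimes\wedge^bT_X))$ contributes at most a total of one dimension across the whole filtration. Concretely, one checks $\mH^0(X,\Omega_X^a\otimes\wedge^bT_X)=0$ for $a>b$, and $\mH^0(X,\Omega_X^b\otimes\wedge^bT_X)\cong L_0$ via Lemma \ref{lemma-one-diml-section-End} (stability of the flag variety, since $\Omega_X^b\otimes\wedge^bT_X\cong\mathcal{E}nd_{\Ox_X}(\wedge^bT_X)$ contains $\Ox_X$ and $\wedge^bT_X$ is stable), so that the only subquotient of $V_{2r}^{-2r}$ with a nonzero $H^0$ is the bottom one $\Omega_X^r\otimes\wedge^rT_X$ — wait, I must be careful: $\Omega_X^b\otimes\wedge^bT_X$ has $\mH^0\cong L_0$ for \emph{every} $b$, so this needs more care. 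The resolution is that in the filtration of $V_{2r}^{-2r}$, the relevant subquotient bundles are $\Omega_X^a\otimes\wedge^bT_X$ with $a+b=2r$ and $a\geq r\geq b$, i.e. only $a=b=r$ has $a=b$; all others have $a>b$ and hence vanishing $H^0$.

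So the main obstacle — and the step that needs the most care — is pinning down precisely which subquotient bundles appear in a $B$-module filtration of $V_{2r}^{-2r}$ and verifying that the only one with $a=b$ is $\Omega_X^r\otimes\wedge^rT_X$ (equivalently, that the degree-$(-2r)$ constraint forces $a\geq r$), together with checking the long exact sequences in cohomology do not allow $H^0$ to jump up from connecting maps. The first point follows from the grading count: a wedge monomial in $\wedge^{2r}_{S^\bullet(\lu)}V_1$ of $\C^*$-degree $-2r$ with $a$ factors from $\n$ (degree $-2$ each), $b$ factors from $\g$-type generators $\overline{e}_i$ (degree $0$), and polynomial coefficient of degree $2c$ in $S^\bullet(\lu)$ satisfies $a+b=2r$ and $-2a+2c=-2r$, hence $c=a-r\geq 0$, so $a\geq r$ and $b=2r-a\leq r\leq a$; thus in every subquotient $a\geq b$ with equality only when $a=b=r$. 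The second point is handled because for all subquotients other than $\Omega_X^r\otimes\wedge^rT_X$ we have $\mH^0=0$, so iterating the long exact sequences shows $\mH^0(X,G\times_BV_{2r}^{-2r})$ injects into $\mH^0(X,\Omega_X^r\otimes\wedge^rT_X)\cong L_0$. Combined with the lower bound, this gives $\mH^0(\Nt,\wedge^{2r}T\Nt)^{-2r}\cong L_0$, completing the proof. As a sanity check, for $m=4$ one has $n=6$ so $r$ ranges over $0,1,2,3$ on the main diagonal, and the case $r=3$ ($2r=n$) is also covered directly by the $\mathfrak{sl}_2$-action since $\mathH^0(\Nt,\wedge^6T\Nt)^{-6}\cong\mH^0(\Nt,\wedge^0T\Nt)^0\cong L_0$ by Corollary \ref{cor-sl2-iso}; for $r>n/2$ one uses the duality of Corollary \ref{cor-half-reduction} to reduce to $r\leq n/2$.
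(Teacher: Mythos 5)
Your lower bound is fine: $\tau^r$ is a nonzero $G$-invariant section of $\wedge^{2r}T\Nt$ of degree $-2r$ (either because $\tau$ is a nondegenerate bivector, or by factoring the isomorphism $\tau^n\wedge(-)$ of Corollary \ref{cor-sl2-iso}), and this indeed produces the copy of $L_0$ on the diagonal. The upper bound, however, has two concrete gaps. First, the subquotients of $V_{2r}^{-2r}$ are not the bundles $\Omega_X^a\otimes\wedge^bT_X$ with $a+b=2r$: your own grading count yields a polynomial coefficient of degree $2c$ with $c=a-r$, and this factor does not go away --- the correct subquotients are $S^{a-r}T_X\otimes\Omega_X^a\otimes\wedge^{2r-a}T_X$. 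Compare with the paper's own sequences for $\mathfrak{sl}_3$: for $r=1$ the sub-bundle of $\F_1$ in \eqref{eqn-F1-sheaf-sequence} is $\Omega^2\otimes T$, not $\Omega^2$, and for $r=2$ the sub-bundle of $\F_3$ in \eqref{eqn-ses-F3} is $\Omega^3\otimes T\otimes T$. Hence the vanishing you actually need is $\mH^0(X,S^{a-r}T_X\otimes\Omega_X^a\otimes\wedge^{2r-a}T_X)=0$ for $a>r$, which you neither state nor verify; the statement you do assert, $\mH^0(\Omega_X^a\otimes\wedge^bT_X)=0$ for $a>b$, concerns the wrong bundles and is itself only claimed (``one checks''), not checked. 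Second, the one-dimensionality of the sections of the top quotient, $\mH^0(X,{\mathcal E}nd_{\Ox_X}(\wedge^rT_X))\cong\C$, is justified by asserting that $\wedge^rT_X$ is stable. Lemma \ref{lemma-one-diml-section-End} covers only $r=1$, and stability of $T_X$ gives at best polystability/semistability of its exterior powers, not stability; simplicity of $\wedge^rT_X$ therefore requires a separate argument. These two points are exactly the nontrivial content here: the paper itself does not give a structural proof but establishes the proposition by running the algorithm of Section \ref{sec-Springer} explicitly, deferring a general argument ``via some basic properties of (semi)stable vector bundles'' on $T^*(G/P)$ to the sequel \cite{LQ2}. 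Your strategy is repairable in that direction --- with the corrected subquotients a slope count gives $\mu(S^{a-r}T_X\otimes\wedge^{2r-a}T_X)=r\,\mu(T_X)<a\,\mu(T_X)=\mu(\wedge^aT_X)$, so semistability yields the vanishing --- but as written the proof is not complete.

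Two smaller points. Your closing sanity check misuses Corollary \ref{cor-sl2-iso}: it relates $\wedge^{n-j}T\Nt$ to $\wedge^{n+j}T\Nt$, so it says nothing about the middle term; for $\mathfrak{sl}_4$ the case $r=3$, i.e. $\mH^0(\Nt,\wedge^6T\Nt)^{-6}$, is not ``covered directly by the $\mathfrak{sl}_2$-action'' and must come from the same filtration argument (the reduction of $r>n/2$ to $n-r$ via Corollary \ref{cor-half-reduction} is fine, but note $r$ ranges over $0,\dots,6$ on the diagonal of Table \ref{table-Hodge-sl4}, not $0,\dots,3$). Also, your phrase ``applied with $j=2r$'' in the lower bound is garbled; the clean statement is that $\tau^n\wedge(-)$ is injective on $\mH^0(\Nt,\Ox_\Nt)^0$ by Corollary \ref{cor-sl2-iso}, hence so is $\tau^r\wedge(-)$ for $r\le n$.
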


\begin{proof} 
The result can be computed explicitly using the algorithm developed in Section \ref{sec-Springer}. A proof of a more general 
statement valid for any $\Nt_P:=T^*(G/P)$, where $G$ is a simple complex Lie group and  $P$ is a parabolic subgroup, will be given in the sequel \cite{LQ2}, via some basic properties of (semi)stable vector bundles.
\end{proof} 

\begin{proof}[Proof of Theorem \ref{conj-sl4}]
Taking into account Proposition \ref{prop-diagonal} and Corollary \ref{cor-big-subalg}, the entries of the first column, bottom row and the main diagonal of Table \ref{table-Hodge-sl4} coincide with the respective values in Haiman's diagonal coinvariant algebra. 

To compute the remaining entries, we use the following three steps:

\begin{enumerate}
\item[(i)] Consider the cohomolgy groups for the natural subquotient vector bundles of 
$$\mathrm{pr}_*(\wedge^k T\Nt)^{-2r}\cong G\times_B V_k^{-2r} $$ and show that in the appropriate cohomological degree $\mH^{2r-k}$, the bundles only contribute copies of trivial $G$-modules to the total cohomology.
\item[(ii)] Compute the terms in the second left-most column using the relative Lie algebra cohomology for the corresponding sheaves. By Step (i) and Proposition \ref{rel_lie_BGG}, this is reduced to  computing the multiplicity space of trivial representation in the corresponding Hochschild cohomology term. Then Proposition \ref{rel_lie_BGG} and 
Corollary \ref{cor-exterior-product} provide an explicit algebraic algorithm for this computation. 
The terms $h^{1,3} =4$ and $h^{4,6} =8$ have been computed by hand; the terms $h^{2,4} =9$ and $h^{3,5}=11$ 
have been computed using the same algorithm implemented in Python. 

\item[(iii)] Give an upper bound for $h^{1,5}$ and $h^{2,6}$ by analyzing the cohomology of the subquotient sheaves. 
The upper bound can be directly read off from the dimensions of the cohomology groups considered in Step (i). 
A Python-based computation gives $h^{1,5} \leq 4 = h^{1,3}$ and $h^{2,6} \leq 9 = h^{2,4}$.  
Then the $\mathfrak{sl}_2$-action  along the diagonals (Theorem \ref{thm-sl2-action}) assures that this upper bound is always achieved for all remaining entries along the same diagonal. 
\end{enumerate}
\end{proof} 

\begin{remark} 
In Steps (i) and (ii) above we will need explicit expressions for the maps in the BGG resolution complex for the trivial module for $\mathfrak{sl}_4$. Since we were unable to find them in the literature, and because they may present an independent interest from a representation-theoretic viewpoint, we will record these maps below.  
 Denote by $\n$ the space of strictly lower triangular matrices, and $U(\n)$ its universal enveloping algebra. This is the associative algebra generated by the Chevalley generators $f_1$, $f_2$ and $f_3$ subject to the Serre relations
\[
f_1f_3-f_3f_1=0,
\]
\[
f_1^2f_2-2f_1f_2f_1+f_2f_1^2=0, \quad \quad f_2^2f_1-2f_2f_1f_2+f_1f_2^2=0,
\]
\[
f_2^2f_3-2f_2f_3f_2+f_3f_2^2=0, \quad \quad f_3^2f_2-2f_3f_2f_3+f_2f_3^2=0.
\]
The BGG complex for the zero weight looks as follows
\begin{equation}
M_\bullet: 0\lra M_6 \stackrel{d_5}{\lra}  M_5 \stackrel{d_4}{\lra}  M_4 \stackrel{d_3}{\lra} M_3 \stackrel{d_2}{\lra}  M_2 \stackrel{d_1}{\lra}  M_1 \stackrel{d_0}{\lra}  M_0 \lra 0,
\end{equation}
where each $M_i$ is a direct sum of free $\h$-graded $U(\n)$-modules, and $d_i$ are $\h$-grading preserving maps. To describe the modules and the differentials, we will use the following notation: if $s_{i_1}s_{i_2}\cdots s_{i_k}$ is a product of simple reflections in $S_3$, then we will abbreviate
\begin{equation}
U_{i_1i_2 \dots i_k}:=U(\n)\cdot 1_{(s_{i_1}s_{i_2}\cdots s_{i_k})\cdot 0},
\end{equation} 
the right-hand side denoting the free $U(\n)$-module generated by an $\h$-weight vector of weight $(s_{i_1}s_{i_2}\cdots s_{i_k})\cdot 0$. Also write $U_0=U(\n)$ and $U_{w_0}=U(\n)\cdot 1_{w_0\cdot 0}=U_{123121}$. 

Then the modules $M_0,\dots, M_6$ can be explicitly identified with
\[
M_0\cong U_0, \quad M_6\cong U_{w_0},
\]
\[
M_1\cong U_1\oplus U_2\oplus U_3, \quad M_5 \cong U_{23121} \oplus U_{12321}\oplus U_{21232},
\]
\[
M_2 \cong U_{21}\oplus U_{12}\oplus U_{31}\oplus U_{32}\oplus U_{23},\quad
M_4 \cong U_{1321}\oplus U_{2321} \oplus U_{1231} \oplus U_{2312} \oplus U_{1232},
\]
\[
M_3\cong U_{121}\oplus U_{321}\oplus U_{231}\oplus U_{312}\oplus U_{123}\oplus U_{232}.
\]

The differentials $d_0,\dots, d_5$ can be written as matrices with coefficients in $U(\n)$, and they act by \emph{right multiplication} on the free modules. For instance, we write
\begin{equation}\label{eqn-d0}
d_0 = 
\left(
\begin{array}{c|c|c||c}
{\scriptstyle 1} & {\scriptstyle 2} & {\scriptstyle 3}  & \\ \hline \hline
f_1 & f_2 & f_3 & {\scriptstyle 0} 
\end{array}
\right)
\end{equation}
to indicate that, for any $x,y,z\in U(\n)$,
\[
d_0: U_1\oplus U_2\oplus U_3\lra U_0,\quad (x1_{s_1\cdot 0}, \; y1_{s_2\cdot 0}, \; z1_{s_3\cdot 0})\mapsto xf_1+yf_2+zf_3.
\]
In this notation, we identify $d_1$ with the matrix
\begin{equation}\label{eqn-d1}
\left(
\begin{array}{c|c|c|c|c||c}
{\scriptstyle 21}  & {\scriptstyle 12} & {\scriptstyle 31} & {\scriptstyle 32} & {\scriptstyle 23} & \\ \hline \hline
\scriptstyle{-f_2^2}& \scriptstyle{2f_1f_2-f_2f_1}  
                                       & \scriptstyle{-f_3}&                   &                   &{\scriptstyle 1}\\ 
\hline
\scriptstyle{2f_2f_1-f_1f_2}   
                   &\scriptstyle{-f_1^2}&                  &\scriptstyle{f_3^2}&\scriptstyle{f_3f_2-2f_2f_3 }                           &{\scriptstyle 2}\\
\hline
                  &                   & \scriptstyle{f_1}  &   \scriptstyle{f_2f_3-2f_3f_2}  & \scriptstyle{f_2^2}      &{\scriptstyle 3}  
\end{array}
\right),
\end{equation}
 $d_2$ with the matrix
\begin{equation}\label{eqn-d2}
\left(
\scalemath{0.9}{
\begin{array}{c|c|c|c|c|c||c}
{\scriptstyle 121} & {\scriptstyle 321} & {\scriptstyle 231} & {\scriptstyle 312} & {\scriptstyle 123} & {\scriptstyle 232} &                  \\ \hline\hline
\scriptstyle{-f_1} &\scriptstyle{f_3^3} &\scriptstyle{3f_2f_3-2f_3f_2}  &                    &                    &                      
     & {\scriptstyle 21} \\ \hline
\scriptstyle{-f_2} &                    &                    & \scriptstyle{f_3^2} & \begin{array}{c}\scriptstyle{6f_1f_2f_3-4f_2f_1f_3}\\\scriptstyle{-3f_1f_3f_2+2f_3f_2f_1}\end{array} &                                                                         
     & {\scriptstyle 12} \\  \hline  
                   &  \begin{array}{c}\scriptstyle{-f_3^2 f_2^2 - 4 f_3f_2f_3f_2}\\\scriptstyle{-2f_2f_3f_2f_3+6f_3f_2^2f_3} \end{array}                  &\scriptstyle{-f_2^3}& \begin{array}{c}\scriptstyle{4f_1f_3f_2-2f_3f_2f_1}\\\scriptstyle{-2f_1f_2f_3+f_2f_3f_1} \end{array}      &                    
 \begin{array}{c}\scriptstyle{f_1^2f_2^2+4f_1f_2f_1f_2}\\\scriptstyle{+2f_2f_1f_2f_1-6f_1f_2^2f_1} \end{array}                  
                   &                                                                         
     & {\scriptstyle 31} \\  \hline 
                   & \begin{array}{c}\scriptstyle{-6f_3f_2f_1+4f_2f_1f_3}\\ \scriptstyle{+3f_1f_3f_2-2f_1f_2f_3}\end{array}                             &                    &        \scriptstyle{f_1^2}       &                    &                                                   \scriptstyle{f_2} & {\scriptstyle 32} \\  \hline
                   &                    &\scriptstyle{3f_2f_1-2f_1f_2} 
                                                             &                    & \scriptstyle{-f_1^3}                                                              &\scriptstyle{f_3}
& {\scriptstyle 23} \\          
\end{array}}
\right),
\end{equation}
$d_3$ with the matrix
\begin{equation}\label{eqn-d3}
\left(
\scalemath{0.9}{
\begin{array}{c|c|c|c|c||c}
 \scriptstyle{1321} & \scriptstyle{2321} & \scriptstyle{1231} & \scriptstyle{2312} & \scriptstyle{1232} &  \\ 
 \hline \hline
\scriptstyle{-f_3^3}&                    &  \begin{array}{c}
\scriptstyle{6f_1f_2f_3-4f_1f_3f_2}\\ \scriptstyle{-3f_2f_1f_3+2f_3f_2f_1}
\end{array}                 &   \begin{array}{c}\scriptstyle{f_2^2 f_3^2 + 4 f_2f_3f_2f_3}\\\scriptstyle{+2f_3f_2f_3f_2-6f_2f_3^2f_2} \end{array}                 &                &  \scriptstyle{121}\\ \hline 
\scriptstyle{-f_1}  & \scriptstyle{f_2}  &                   &                  &                &  \scriptstyle{321}\\ \hline
                    &\scriptstyle{-f_3^2}&\scriptstyle{f_1^2}& \begin{array}{c}
\scriptstyle{4f_2f_1f_3-2f_1f_2f_3}\\ \scriptstyle{-2f_3f_2f_1+f_1f_3f_2}
\end{array}                                      &                &  \scriptstyle{231}\\ \hline
\begin{array}{c}{\scriptstyle{2f_2f_3}}\\ \scriptstyle{-3f_3f_2}\end{array}&           &                   & \scriptstyle{f_2^3} & \begin{array}{c}\scriptstyle{2f_2f_1}\\ \scriptstyle{-3f_1f_2}\end{array}    &  \scriptstyle{312}\\ \hline
                    &                    & \scriptstyle{f_2} &                  &  \scriptstyle{f_3}    &  \scriptstyle{123}\\ \hline 
                    &\begin{array}{c}
\scriptstyle{6f_3f_2f_1-4f_1f_3f_2}\\ \scriptstyle{-3f_2f_1f_3+2f_1f_2f_3}
\end{array}                    &                   &   \begin{array}{c}\scriptstyle{-f_2^2f_1^2-4f_2f_1f_2f_1}\\\scriptstyle{-2f_1f_2f_1f_2+6f_2f_1^2f_2} \end{array}                      & \scriptstyle{f_1^3}&  \scriptstyle{232} \\ 
\end{array} }
\right),
\end{equation}
$d_4$ with the matrix
\begin{equation}\label{eqn-d4}
\left(
\begin{array}{c|c|c||c}
 \scriptstyle{23121}         & \scriptstyle{12321}          &      \scriptstyle{21232}         &                   
\\ \hline \hline
 \scriptstyle{f_2^2}         & \scriptstyle{f_2f_1-2f_1f_2} &                                  &  \scriptstyle{1321} 
\\ \hline
\scriptstyle{2f_2f_1-f_1f_2} & \scriptstyle{-f_1^2}         &                                  &  \scriptstyle{2321} 
\\ \hline
                             & \scriptstyle{-f_3^2}         &  \scriptstyle{f_3f_2-2f_2f_3}    &  \scriptstyle{1231} 
\\ \hline
\scriptstyle{f_3}            &           &         \scriptstyle{f_1}                           &  \scriptstyle{2312} 
\\ \hline
                             &\scriptstyle{2f_3f_2-f_2f_3}  &  \scriptstyle{f_2^2}             &  \scriptstyle{1232} 
\\ 
\end{array}
\right),
\end{equation}
and, finally, $d_5$ with the matrix
\begin{equation}\label{eqn-d5}
\left(
\begin{array}{c||c}
        w_0            &                     \\ \hline \hline
        -f_1           & \scriptstyle{23121} \\ \hline
        -f_2           & \scriptstyle{12321} \\ \hline
         f_3           & \scriptstyle{21232} \\ 
\end{array}
\right) .
\end{equation}

It follows from Proposition \ref{rel_lie_BGG} that, if $E$ is any $B$-module, then the multiplicity of $L_0$ inside the cohomology group
$ \mH^\bullet(X,G\times_B E) $ can be computed as the dimension of the cohomology of the complex $(\mathrm{Hom}_{\n}(M_\bullet, E)^\h,d_\bullet^*)$, which now takes the form
\begin{equation}
0\lra E[0]\to \cdots \to \bigoplus_{l(w)=j} E[w\cdot 0]\stackrel{d_j^*}{\lra}\bigoplus_{l(w)=j+1} E[w\cdot 0]\to \cdots \to E[w_0\cdot 0]\lra 0,
\end{equation}
where the differentials $d_j^*$ ($j=0,\dots ,5$) are obtained by letting the above matrices of lowering operators \eqref{eqn-d0}--\eqref{eqn-d5} act on the corresponding weight spaces of $E$.

Using the BGG resolution maps described above, we have obtained the result confirming the Conjecture \ref{conj-DC} 
in case of $u_q(\mathfrak{sl}_4)$.
\end{remark}

\addcontentsline{toc}{section}{References}



%

\vspace{0.1in}

\noindent A.~L.: { \sl \small Department of Mathematics, \'{E}cole Polytechnique F\'{e}d\'{e}rale de Lausanne,  SB MATHGEOM, MA A2 407 (B\^atiment MA), Station 8, CH-1015 Lausanne, Switzerland} \newline \noindent {\tt \small email: anna.lachowska@epfl.ch}

\vspace{0.1in}

\noindent Y.~Q.: { \sl \small Department of Mathematics, Yale University, New Haven, CT 06511, USA} \newline \noindent {\tt \small email: you.qi@yale.edu}

%

\end{document}